\title[Exposants dans les paquets d'Arthur]{Comparaisons des exposants \`a l'int\'erieur d'un paquet d'Arthur archim\'edien \\
Exponents in Archimedean Arthur packets}
\author{Nicolas Bergeron et Laurent Clozel} 
\thanks{N.B. \& L.C sont membres de l'Institut Universitaire de France. Pendant la r\'edaction de cet article L.C \'etait partiellement financ\'e par la Florence Gould Foundation, l'Ellentuck Fund et le James D. Wolfensohn Fund.}
\address{Institut de Math\'ematiques de Jussieu \\
Unit\'e Mixte de Recherche 7586 du CNRS \\
Universit\'e Pierre et Marie Curie \\
4, place Jussieu 75252 Paris Cedex 05, France \\}
\email{bergeron@math.jussieu.fr}
\urladdr{http://people.math.jussieu.fr/~bergeron}
\address{Universit\'e Paris Sud \\
Unit\'e Mixte de Recherche 8628 du CNRS \\
Laboratoire de Math\'ematiques \\
B\^at. 425, 91405 Orsay cedex, France\\}
\email{Laurent.Clozel@math.u-psud.fr}
\keywords{Repr\'esentations unitaires, exposants, conjecture d'Osborne, paquets d'Arthur}
\subjclass[2000]{22E45,22E46}
 \DeclareFontFamily{OT1}{rsfs}{}
\DeclareFontShape{OT1}{rsfs}{n}{it}{<-> rsfs10}{}
\DeclareMathAlphabet{\mathscr}{OT1}{rsfs}{n}{it}
\newcommand{\C}{\mathbb{C}}
\newcommand{\Z}{\mathbf{Z}}
\newcommand{\B}{\mathbf{B}}
\newcommand{\T}{\mathbf{T}}
\renewcommand{\a}{\mathfrak{a}}
\newcommand{\n}{\mathfrak{n}}
\DeclareFontFamily{OT1}{rsfs}{}
\DeclareFontShape{OT1}{rsfs}{n}{it}{<-> rsfs10}{}
\DeclareMathAlphabet{\mathscr}{OT1}{rsfs}{n}{it}
\newcommand{\G}{\mathbf{G}}
\newcommand{\LL}{\mathbf{L}}
\newcommand{\R}{\mathbf{R}}
\newcommand{\SO}{\mathrm{SO}}
\newcommand{\TT}{\mathbf{T}}
\newcommand{\g}{\mathfrak{g}}
\newcommand{\uu}{\mathfrak{u}}
\newtheorem{thm}[subsection]{Th\'eor\`eme}  
\newtheorem{lem}[subsection]{Lemme}         
\newtheorem*{lem*}{Lemme}         
\newtheorem{prop}[subsection]{Proposition}
\newtheorem*{prop*}{Proposition}
\theoremstyle{definition}
\numberwithin{equation}{subsection}
\newcommand{\GL}{\mathrm{GL}}
\newcommand{\SL}{\mathrm{SL}}
\newcommand{\Sp}{\mathrm{Sp}}
\newcommand{\N}{\mathbf N}
\newcommand{\RR}{\mathbf R}
\newcommand{\CC}{\mathbf C}
\newcommand{\ZZ}{\mathbf Z}
\def\adots{\mathinner{\mkern2mu\raise1pt\hbox{.}
\mkern3mu\raise4pt\hbox{.}\mkern1mu\raise7pt\hbox{.}}}
\begin{document}

\begin{abstract}  
En g\'en\'eralisant la d\'emonstration de Hecht et Schmid de la conjecture d'Osborne nous d\'emontrons une version archim\'edienne -- et plus faible -- d'un th\'eor\`eme de Colette Moeglin. 
Cela donne un sens archim\'edien pr\'ecis au principe \'enonc\'e par le second auteur selon
lequel {\it on trouve dans un paquet d'Arthur des repr\'esentations qui appartiennent au paquet de 
Langlands associ\'e et des repr\'esentations plus temp\'er\'ees.} 

Generalizing the proof -- by Hecht and Schmid -- of Osborne's conjecture we prove an Archimedean (and weaker) version of a theorem of Colette Moeglin. The result we obtain is a precise Archimedean version
of the general principle -- stated by the second author -- according to which {\it a local Arthur packet contains the corresponding local $L$-packet and representations which are more tempered.} 

\end{abstract}
\maketitle
\tableofcontents

\section{Introduction}

Dans cet article nous formulons et d\'emontrons l'analogue de la formule d'Osborne \cite[Thm. 3.6]{HechtSchmid} dans le cas des groupes tordus. Cela permet de r\'eexprimer les identit\'es de caract\`eres
consid\'er\'ees par Arthur dans \cite[Thm. 30.1]{Arthur} en termes d'identit\'es de traces sur la $\mathfrak{n}$-homologie. En guise d'application, nous comparons les exposants des repr\'esentations des groupes intervenant dans un m\^eme paquet d'Arthur sur $\CC$.

Consid\'erons en effet un param\`etre d'Arthur
$$\psi = \chi_1 \otimes R_{a_1} \oplus \ldots \oplus \chi_m \otimes R_{a_m}$$ 
d'image contenue dans $\widehat{G} \subset \GL (N , \CC)$,
o\`u chaque $\chi_j$ est un caract\`ere unitaire de $\CC^*$ que l'on \'ecrit 
$z \mapsto z^{p_j} \bar z ^{q_j}$ avec $\mathrm{Re} (p_j + q_j) = 0$, chaque $R_{a_j}$ est 
une repr\'esentation de dimension $a_i$ de $\SL(2 , \CC)$ et $\widehat{G} = \SO (N , \CC)$
ou $\Sp (N,\CC)$.  
On associe au param\`etre $\psi$ la 
repr\'esentation de $\GL(N , \CC)$~: 
\begin{equation} \label{Pi}
\Pi = \Pi_{\psi} =  \mathrm{ind} (  \chi_1 \circ \det \otimes \ldots \otimes \chi_m \circ \det )
\end{equation}
(induction unitaire \`a partir du parabolique de type $(a_1 , \ldots , a_m)$). 
Rappelons qu'Arthur associe \`a $\psi$ un paquet fini $\small\prod (\psi)$ de repr\'esentations du groupe
complexe $G$ de groupe dual $\widehat{G}$, voir \cite[Thm. 30.1]{Arthur}. 

Comme nous le rappelons au \S \ref{S2}, il correspond naturellement au groupe $\GL (N, \CC)$ {\it tordu} (par un automorphisme involutif $\theta$) un syst\`eme de racines r\'eduit de groupe associ\'e $\SO (2\ell +1 , \CC)$ o\`u 
$\ell = [N/2]$. Le tore diagonal de $G$ est naturellement isomorphe au tore diagonal de $\SO (2\ell +1 , \CC)$; notons $A\cong \RR^{\ell}$ sa partie d\'eploy\'ee et $\mathfrak{a}$ son alg\`ebre de Lie r\'eelle. Un {\it exposant} de $G$
est un \'el\'ement de $\mathrm{Hom} (\mathfrak{a} , \CC ) = \CC^{\ell}$. On note
$e_{\psi}$ l'exposant $(m_1 \leq \ldots \leq m_{\ell}) \in \CC^{\ell}$ o\`u les $m_j \in \N$ sont 
les \'el\'ements des segments $\sigma_i = \left( a_i -1, a_i -2 , \ldots  \right)$, rang\'es par ordre croissant. 

Il correspond au groupe $\SO (2\ell +1 , \CC)$ un ordre sur les exposants donn\'e par les racines de
$\SO (2\ell +1 , \CC)$~:
$$e \leq_{\theta} e' \Leftrightarrow e' = e + \sum_{\alpha} n_{\alpha} \alpha \quad (n_{\alpha} \geq 0)$$
o\`u $\alpha$ d\'ecrit les racines simples de $\SO (2\ell +1 , \CC)$. Noter que les racines de $G$ 
d\'efinissent aussi un ordre sur $\CC^{\ell}$; si $e$ est inf\'erieur \`a $e'$ pour l'ordre d\'efini par les racines de $G$ alors $e\leq_{\theta} e'$, la r\'eciproque n'est pas vraie en g\'en\'eral. Cela \'etant, on a le r\'esultat suivant qui est une version archim\'edienne, et plus faible, d'un th\'eor\`eme de Colette Moeglin
\cite{Moeglin}.

\begin{thm} \label{thm:intro}
Soit $\pi$ une repr\'esentation arbitraire de $\small\prod (\psi)$ et $e$ un exposant minimal de $\pi$.
Alors~:
$$\mathrm{Re} (e) \geq_{\theta} e_{\psi}.$$
\end{thm}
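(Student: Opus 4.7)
Notre strat\'egie repose sur trois ingr\'edients~: la formule d'Osborne tordue \'etablie dans les paragraphes pr\'ec\'edents, qui exprime la trace de $\theta$ sur la $\mathfrak{n}$-homologie d'une repr\'esentation de $\GL(N,\CC)$ en termes du caract\`ere $\theta$-tordu~; l'identit\'e de caract\`eres d'Arthur \cite[Thm. 30.1]{Arthur}, qui exprime, apr\`es transfert, la trace $\theta$-tordue de $\Pi_\psi$ comme une somme sign\'ee des caract\`eres des $\pi\in\prod(\psi)$~; et enfin un calcul explicite des exposants de $\Pi_\psi$.

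Je commencerais par ce dernier point. Puisque $\Pi_\psi$ est une induite unitaire \`a partir du parabolique de type $(a_1,\ldots,a_m)$ avec donn\'ees $\chi_j\circ\det$, sa $\mathfrak{n}$-homologie est explicitement calculable~: les exposants obtenus sont essentiellement les translat\'es de Weyl des segments $\sigma_i=(a_i-1,a_i-2,\ldots)$ d\'ecal\'es par les caract\`eres $\chi_j$, et $e_\psi$ en est un minimum pour l'ordre tordu $\leq_\theta$. Combin\'ee avec la formule d'Osborne tordue, cette observation assure que tous les exposants apparaissant dans le d\'eveloppement asymptotique de la trace $\theta$-tordue de $\Pi_\psi$ sont $\geq_\theta e_\psi$. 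En appliquant alors la formule d'Osborne usuelle \cite{HechtSchmid} \`a chaque $\pi\in\prod(\psi)$ et en reportant dans l'identit\'e d'Arthur, on obtient une identit\'e entre les exposants de $\Pi_\psi$ et ceux des $\pi$ pond\'er\'es par les signes endoscopiques.

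L'obstacle principal est d'exclure les simplifications sur le c\^ot\'e paquet~: a priori, un exposant $e$ d'un $\pi\in\prod(\psi)$ avec $\mathrm{Re}(e)\not\geq_\theta e_\psi$ pourrait exister s'il \'etait compens\'e par un exposant d'un autre membre du paquet. Pour traiter ce point, j'utiliserais un argument de g\'en\'ericit\'e dans les caract\`eres unitaires $\chi_j$~: pour des $\chi_j$ en position g\'en\'erale les exposants apparaissant dans les $\mathfrak{n}$-homologies sont tous distincts, aucune compensation n'est possible, l'in\'egalit\'e cherch\'ee est imm\'ediate, et on conclurait dans le cas g\'en\'eral par continuit\'e en les $\chi_j$.
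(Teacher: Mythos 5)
Votre strat\'egie g\'en\'erale (formule d'Osborne tordue, identit\'e d'Arthur, comparaison des exposants de $\Pi_\psi$) est bien celle du papier, mais deux maillons essentiels manquent ou sont remplac\'es par des arguments qui ne tiennent pas.

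D'abord, vous n'abordez pas la question du \emph{signe} $\varepsilon(\pi)$ dans l'identit\'e de traces d'Arthur. Dans l'identit\'e brute \eqref{TrIdent}, les signes $\varepsilon(\pi)$ peuvent d\'ependre de $\pi$, et dans ce cas rien n'emp\^eche deux contributions de $H_0$ provenant de membres distincts du paquet de s'annuler mutuellement -- ce qui ferait dispara\^{\i}tre l'exposant minimal que l'on cherche \`a localiser. Le papier contourne ce probl\`eme en passant \`a l'identit\'e raffin\'ee \eqref{TrIdent2} obtenue en factorisant $\psi$ \`a travers le groupe endoscopique $H'\times H''$ associ\'e \`a $s_\psi$ : de ce c\^ot\'e-l\`a le signe devient une constante $\varepsilon$, ind\'ependante de $\pi$, et les contributions de $H_0$ sont toutes du m\^eme signe. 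C'est pr\'ecis\'ement ce qui rend la proposition \ref{prop:exp} accessible, et c'est aussi pourquoi elle fait intervenir $\Pi'\otimes\Pi''$ et non $\Pi_\psi$ directement.

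Ensuite, votre traitement des compensations par g\'en\'ericit\'e des $\chi_j$ suivi d'un passage \`a la limite ne fonctionne pas : les paquets d'Arthur $\prod(\psi_t)$ ne varient pas contin\^ument avec les caract\`eres unitaires. Lorsque les $\chi_j$ convergent vers des valeurs sp\'eciales, des repr\'esentations deviennent r\'eductibles, les paquets se scindent ou fusionnent, et la repr\'esentation $\pi$ dont on contr\^ole l'exposant peut ne plus appartenir au paquet limite. Le papier \'evite cet \'ecueil par un argument de descente purement alg\'ebrique dans l'identit\'e \eqref{id1} : si un exposant minimal $e$ de $\pi$ n'y subsiste pas, il est annul\'e par un exposant d'homologie en degr\'e impair d'une autre repr\'esentation $\pi_1$ du paquet ; d'apr\`es la proposition \ref{ExpMax}, ce dernier est strictement sup\'erieur (pour $\leq_H$) \`a un exposant minimal $e'$ de $\pi_1$ ; on r\'ep\`ete, et la descente stricte termine. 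C'est un argument ferm\'e, sans d\'eformation de $\psi$.

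Enfin, l'affirmation que la $\mathfrak{n}$-homologie de $\Pi_\psi$ est \og explicitement calculable\fg \ et que $e_\psi$ en est un minimum pour $\leq_\theta$ est une sur-simplification : $\Pi_\psi$ n'est pas un module standard, et la comparaison $\mathrm{Re}(E) \geq_\theta E_\psi$ pour tout exposant d'homologie $\theta$-stable $E$ (la proposition \ref{Pexp}) demande l'argument de la derni\`ere section, qui r\'ealise $\Pi_\psi$ comme sous-module irr\'eductible d'une induite de Langlands et fait d\'ecro\^{\i}tre l'exposant par des op\'erateurs d'entrelacement sur les $\GL(2,\CC)$ successifs.
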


Nous avons adopt\'e, sur les caract\`eres r\'eels de tores d\'eploy\'es, l'ordre usuel dans ces questions, voir \cite{HechtSchmid}. Le th\'eor\`eme dit donc que les coefficients de $\pi$ {\it d\'ecroissent plus vite}
que le caract\`ere associ\'e \`a $e_{\psi}$, selon le principe g\'en\'eral \'enonc\'e dans \cite{ClozelABS}.

\medskip

{\it Nous remercions le rapporteur, dont la lecture nous a permis de corriger une erreur caract\'eris\'ee dans la version originale.}

\section{Rappels sur les groupes tordus} \label{S2}

\subsection{} \label{2.1} Soit $\G$ un groupe r\'eductif et connexe sur $\R$ et $\theta$ un automorphisme
de $\G$ d'ordre fini \'egal \`a $d$ et d\'efini sur $\R$. On identifie $\G$ au groupe de ses
points complexes et on note $\G_{\rm sc}$ le rev\^etement simplement connexe de son groupe d\'eriv\'e. On a 
une application naturelle 
$${\rm Aut} (\G) \rightarrow {\rm Aut} (\G_{\rm sc}).$$

Consid\'erons l'espace alg\'ebrique tordu (voir \cite{Labesse})~:
$$\LL = \G \rtimes \theta  \subset \G \rtimes {\rm Aut} (\G ).$$
\'Etant donn\'e un \'el\'ement $\delta$ dans $\LL$ on note $\G^{\delta}$ le groupe des points fixes 
de l'automorphisme ${\rm Ad}_{\LL} (\delta )$. C'est le centralisateur de $\delta$. Nous adoptons la notation
d'Arthur $\G_{\delta}$ pour d\'esigner la composante connexe de l'\'el\'ement neutre dans $\G^{\delta}$. 

\medskip
\noindent
{\it Exemple.} Pour les applications que nous consid\`ererons on peut prendre pour $\G$ le groupe 
des automorphismes lin\'eaires de $F^N$ avec $F= \R$ ou $\CC$ et pour $\theta$ l'automorphisme involutif 
op\'erant par $g \mapsto g^{\theta}=J {}^t \- g^{-1} J^{-1}$ avec 
$$J = \left(
\begin{array}{cccc}
& & & - 1 \\
& & 1 & \\
& \adots & & \\
(-1)^N & & & 
\end{array} \right).$$ 
Il correspond \`a $J$ une forme bilin\'eaire non d\'eg\'en\'er\'ee sur $F^N$. Le
groupe $\G$ agit naturellement \`a gauche et \`a droite sur l'ensemble des formes bilin\'eaires non d\'eg\'en\'er\'ees sur $F^N$. En particulier~: pour $g$ et $g'$ dans $\G$ on a $(g,g') \cdot J = g J {}^t \- (g')^{-1}$. On peut alors identifier $\LL$ \`a l'espace alg\'ebrique des formes bilin\'eaires non d\'eg\'en\'er\'ees sur $F^N$ par l'application $(g , \theta) \mapsto   g J$.
\medskip

Soit $\B$ un sous-groupe
de Borel dans $\G$ et $\T$ un tore maximal. Suivant la terminologie de 
Kottwitz et Shelstad \cite{KS} on appelle la donn\'ee $(\B,\T)$ une {\it paire} dans $\G$.
Rappelons qu'un \'el\'ement $\delta$ est {\it semisimple} si ${\rm Ad}_{\LL} (\delta )$ est semisimple.
Un \'el\'ement semisimple {\it r\'egulier} est un \'el\'ement $\delta$ dans $\LL$ dont le centralisateur connexe $\G_{\delta}$ est un tore. 

\begin{lem} \cite[Lem. II.1.1]{Labesse}
Soit $\delta \in \LL$ semisimple et soit $(\B, \T)$ une paire $\delta$-stable. Alors
$\T_{\delta} = \T \cap \G_{\delta}$ et $\B_{\delta} = \B \cap \G_{\delta}$ d\'efinissent une paire dans $\G_{\delta}$. R\'eciproquement, si $(\B_{\delta} , \T_{\delta})$ est une paire dans $\G_{\delta}$, le centralisateur $\T= {\rm Cent} (\T_{\delta} , \G)$ est le tore maximal d'une paire $(\B,\T)$ dans $\G$ telle que
$\B\cap \G_{\delta} = \B_{\delta}$.
\end{lem}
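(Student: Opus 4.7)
Le plan repose sur le th\'eor\`eme de Steinberg concernant les points fixes d'un automorphisme semisimple d'un groupe r\'eductif connexe, appliqu\'e \`a $\sigma = \mathrm{Ad}_{\LL}(\delta)$. Ce th\'eor\`eme affirme notamment que $\G_\delta$ est r\'eductif, que si $\sigma$ stabilise une paire $(\B, \T)$ alors $(\T^{\sigma})^0$ est un tore maximal de $\G_\delta$ et $(\B^{\sigma})^0$ un Borel de $\G_\delta$ le contenant, et qu'inversement tout tore maximal de $\G_\delta$ s'obtient ainsi \`a partir d'un tore maximal $\sigma$-stable de $\G$.

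Pour le sens direct, il suffit alors d'identifier $\T_\delta = \T \cap \G_\delta$ avec $(\T^{\sigma})^0$ et $\B_\delta = \B \cap \G_\delta$ avec $(\B^{\sigma})^0$; le th\'eor\`eme de Steinberg donne aussit\^ot l'assertion voulue.

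Pour la r\'eciproque, je commencerais par appliquer Steinberg afin d'exhiber un tore maximal $\sigma$-stable $\T'$ de $\G$ v\'erifiant $\T_\delta = ((\T')^{\sigma})^0$. Puisque $\T'$ commute \`a $\T_\delta$, on a $\T' \subset \mathrm{Cent}(\T_\delta, \G) = \T$, d'o\`u $\T = \T'$ par maximalit\'e. Pour construire $\B$, j'utiliserais la bijection entre Borels de $\G$ contenant $\T$ et syst\`emes positifs de $\Phi(\G, \T)$, ainsi que la description du syst\`eme de racines de $\G_\delta$ comme ensemble des restrictions non nulles \`a $\T_\delta$ des $\sigma$-orbites dans $\Phi(\G, \T)$. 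Il suffit de choisir un syst\`eme positif $\sigma$-stable $\Delta^+$ de $\Phi(\G, \T)$ dont la restriction non nulle \`a $\T_\delta$ co\"\i ncide avec le syst\`eme positif d\'efini par $\B_\delta$; le Borel $\B$ associ\'e v\'erifie alors $\B \cap \G_\delta = \B_\delta$ par application du sens direct.

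L'\'etape principale et la plus d\'elicate est l'invocation du th\'eor\`eme de Steinberg, en particulier la description du syst\`eme de racines de $\G_\delta$ en fonction de celui de $\G$ et de l'action de $\sigma$, ainsi que l'identification $\T \cap \G_\delta = (\T^{\sigma})^0$ qui requiert un contr\^ole fin des composantes connexes. Ces r\'esultats classiques sont d\'etaill\'es dans \cite{KS}.
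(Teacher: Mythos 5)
Le papier ne d\'emontre pas ce lemme~: il le cite directement chez Labesse \cite[Lem.~II.1.1]{Labesse}, lui-m\^eme bas\'e sur le th\'eor\`eme de Steinberg sur les points fixes d'un automorphisme semisimple. Votre reconstruction suit exactement cette voie et elle est correcte~: l'identification $\T\cap\G_\delta=(\T^\sigma)^0$ s'obtient bien en remarquant que $\T\cap\G_\delta$ centralise le tore maximal $(\T^\sigma)^0$ de $\G_\delta$, et pour la r\'eciproque la conjugaison par $\G_\delta$ des tores maximaux $\sigma$-stables permet d'exhiber $\T'$ puis d'\'etablir $\T'=\mathrm{Cent}(\T_\delta,\G)$.

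Un point que vous auriez gagn\'e \`a expliciter~: pour conclure $\T=\T'$ dans la r\'eciproque par \og maximalit\'e\fg, il faut savoir que $\mathrm{Cent}(\T_\delta,\G)$ est lui-m\^eme un tore, ce qui revient \`a dire qu'aucune racine de $(\G,\T')$ ne s'annule sur $\T_\delta$~; c'est pr\'ecis\'ement un des points fins du th\'eor\`eme de Steinberg (via le fait que $\T_\delta$ contient des \'el\'ements $\sigma$-r\'eguliers). Cela rend d'ailleurs superflu le \og non nulle\fg\ dans votre description du syst\`eme de racines restreint~: toutes les restrictions sont non nulles. De m\^eme, la description de $\Phi(\G_\delta,\T_\delta)$ comme l'ensemble \emph{complet} des restrictions des $\sigma$-orbites est un peu trop optimiste (seules certaines orbites contribuent, selon que $\theta$ fixe ou non un vecteur dans la somme des espaces radiciels, comme le rappelle le \S~2.1 du papier), mais cela n'affecte pas la construction de $\B$~: il suffit que le choix d'un \'el\'ement $\sigma$-fixe r\'egulier dominant pour $\B_\delta$ fournisse un syst\`eme positif $\sigma$-stable dont l'intersection avec $\G_\delta$ redonne $\B_\delta$, ce qui est bien le cas.
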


Un {\it \'epinglage} de $\G$ est un triplet 
$(\B,\T,\{X\})$, o\`u $(\B,\T)$ est une paire dans $\G$ et $\{X\}$ un ensemble de vecteurs propres, un pour 
chaque racine simple de $\T$ dans $\B$. Nous supposerons dor\'enavant que l'automorphisme 
$\theta$ pr\'eserve un \'epinglage $(\B,\T,\{X\})$. Notons\footnote{Noter que vu les notations introduites 
en \ref{2.1} $\T_{\theta}$ d\'esigne (essentiellement) les invariants, et non les coinvariants, de $\theta$.} $\T^1 = \T_{\theta}$ et $\G^1 = \G_{\theta}$.

\begin{lem} \cite[Lem. II.1.2]{Labesse}
Le tore $\T^1$ est maximal dans $\G^1$, $\T^1 = \G^1 \cap \T^{\theta}$ et $\T = {\rm Cent} (\T^1 , \G)$.
Enfin, il existe des isomorphismes canoniques entre les groupes de Weyl~:
$$W(\G^1 , \T^1 ) \stackrel{\sim}{\rightarrow} W (\G , \T)^{\theta}.$$
Un \'el\'ement $w \in W(\G , \T)$ appartient \`a $W (\G , \T)^{\theta}$ si et seulement s'il laisse
stable $\T^{\theta}$.
\end{lem}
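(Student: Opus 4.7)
My plan is to derive all four assertions from the general structure theory of Steinberg on fixed points of a semisimple automorphism of a connected reductive group. Since $\theta$ preserves the epinglage $(\B, \T, \{X\})$, it is in particular a semisimple automorphism of finite order, and Steinberg's theorems then ensure that $\G^{\theta}$ is reductive with identity component $\G^1 = \G_{\theta}$, and that $(\B \cap \G^{\theta}, \T \cap \G^{\theta})$ is a Borel--torus pair of $\G^{\theta}$ whose identity components $(\B^1, \T^1)$ form a Borel--torus pair of $\G^1$. This already yields that $\T^1$ is maximal in $\G^1$. The identity $\T^1 = \G^1 \cap \T^{\theta}$ then follows: the right-hand side is contained in the torus $\T^{\theta}$, meets the connected group $\G^1$, and its identity component is exactly $\T_{\theta} = \T^1$; Steinberg's connectedness of centralizers inside $\G^{\theta}$ (applied to elements of $\T^{\theta}$) forces the finite quotient $(\G^1 \cap \T^{\theta})/\T^1$ to be trivial.

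To establish $\T = \mathrm{Cent}(\T^1, \G)$, one inclusion is obvious since $\T$ is commutative and contains $\T^1$. For the reverse inclusion I would exhibit a $\G$-regular element inside $\T^1$: the restriction map $X^*(\T) \to X^*(\T^1)$ sends each root of $\G$ to a non-zero character of $\T^1$, since otherwise the corresponding root subspace would lie in $\G^1$ with trivial $\T^1$-action and thus be central, a contradiction. Hence the complement in $\T^1$ of the kernels of these restricted roots is non-empty; any element therein is $\G$-regular and its centralizer in $\G$ is a maximal torus containing $\T$, so equals $\T$.

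For the Weyl group isomorphism I would build mutually inverse maps in both directions. Starting from $n_1 \in N_{\G^1}(\T^1)$, conjugation by $n_1$ preserves $\mathrm{Cent}(\T^1, \G) = \T$, so $n_1 \in N_{\G}(\T)$; moreover $n_1 \in \G^{\theta}$, so its conjugation action commutes with $\theta$ on $\T$, giving a $\theta$-fixed Weyl element. Conversely, given $w \in W(\G, \T)^{\theta}$, one chooses a $\theta$-fixed lift $n \in N_{\G}(\T) \cap \G^{\theta}$: this is the classical step of descending an element which is $\theta$-stable modulo $\T$ to a genuinely $\theta$-fixed element, which works because of the vanishing of $H^1(\langle \theta \rangle, \T)$ for a connected torus, equivalently the surjectivity of the Lang-type map $t \mapsto t \cdot \theta(t)^{-1}$ on $\T$. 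Such an $n$ normalizes $\T^1 = (\T^{\theta})^0$, and its reduction modulo $\T^1$ yields an element of $W(\G^1, \T^1)$. A direct check shows the two constructions are mutually inverse.

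The final assertion then follows from the same circle of ideas: the implication $w \in W(\G,\T)^{\theta} \Rightarrow w \cdot \T^{\theta} = \T^{\theta}$ is tautological. For the converse, if $w$ stabilizes $\T^{\theta}$, it stabilizes its identity component $\T^1$; lifting $w$ to $n \in N_{\G}(\T)$, the element $n$ normalizes $\T^1$ and, after multiplication by a suitable element of $\T$, can be arranged to lie in $\G^{\theta}$. This last point is the main technical obstacle of the argument: guaranteeing the existence of a $\theta$-fixed representative for any Weyl element that merely stabilizes $\T^{\theta}$, which rests on the same cohomological vanishing on $\T$ as above. Once this is in hand, the resulting $n \in \G^{\theta} \cap N_{\G}(\T)$ has conjugation action commuting with $\theta$, so $w$ lies in $W(\G, \T)^{\theta}$.
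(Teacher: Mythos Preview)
The paper does not supply its own proof here; the lemma is quoted from Labesse. Your outline follows standard lines, but there is a genuine circularity in your treatment of the final assertion. To show that $w$ stabilising $\T^{\theta}$ implies $w\in W(\G,\T)^{\theta}$, you propose to find a $\theta$-fixed lift $n$ of $w$ by ``the same cohomological vanishing on $\T$''. But that vanishing argument requires $\theta(n)n^{-1}\in\T$, i.e.\ that $\theta(w)=w$ already holds in the Weyl group --- precisely the conclusion you are after. The direct argument avoids lifting entirely: if $w$ stabilises $\T^{\theta}$, then for $t\in\T^{\theta}$ one has $(\theta w\theta^{-1})(t)=\theta(w(t))=w(t)$ since $w(t)\in\T^{\theta}$, so $w^{-1}\theta w\theta^{-1}$ acts trivially on $\T^1$; as $\T^1$ contains $\G$-regular elements (your step establishing $\T=\mathrm{Cent}(\T^1,\G)$), this forces $w=\theta w\theta^{-1}$.

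Two smaller repairs are also needed. Your reason for why no root restricts trivially to $\T^1$ is wrong: trivial $\T^1$-action on a root space does not force that space to be $\theta$-fixed. Use instead that $\theta$ preserves $\B$, so $\mathrm{N}\alpha=\sum_i\theta^i(\alpha)$ is a nonzero sum of roots of a single sign; being $\theta$-invariant, $\mathrm{N}\alpha$ vanishes on $\mathfrak{t}$ iff it vanishes on $\mathfrak{t}^1$, and $\alpha|_{\mathfrak{t}^1}=d_{\alpha}^{-1}(\mathrm{N}\alpha)|_{\mathfrak{t}^1}$. Finally, in the surjectivity step your $\theta$-fixed lift lands in $\G^{\theta}$, not obviously in the identity component $\G^1$; you must still explain why it (or a further modification) represents a class in $W(\G^1,\T^1)=N_{\G^1}(\T^1)/\T^1$.
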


Dans la suite nous notons $W$ le groupe $W (\G , \T)^{\theta}$; c'est naturellement un sous-groupe 
de $W (\G , \T)$ il est isomorphe au groupe de Weyl $W (\G^{\theta} , \T^{\theta})$  
(pour des groupes non connexes, voir Kottwitz-Shelstad
\cite{KS} pour les d\'efinitions).

Soit $R(\B,\T)$ l'ensemble des racines de $\T$ dans $\B$. Alors $R(\B^1 , \T^1)$ est contenu dans
$$R_{\rm res} = \{\alpha_{\rm res} = \alpha_{|\T^1} \; : \; \alpha \in R(\B,\T) \}.$$
De plus, $\alpha_{\rm res}$ appartient \`a $R(\B^1 , \T^1)$ si et seulement si $\theta$ fixe un vecteur
de l'espace engendr\'e par les espaces de racines $\beta \in R(\B,\T)$ telles que $\beta_{\rm res} = 
\alpha_{\rm res}$. Kottwitz et Shelstad distinguent trois types de racines restreintes $\alpha_{\rm res}$~:
\begin{itemize}
\item Type $R_1$ :  $2\alpha_{\rm res}$, $\frac12 \alpha_{\rm res} \notin R_{\rm res}$.
\item Type $R_2$ :  $2\alpha_{\rm res}\in R_{\rm res}$.
\item Type $R_3$ :  $\frac12 \alpha_{\rm res} \in R_{\rm res}$.
\end{itemize}

\medskip
\noindent
{\it Exemple.} (Voir Waldspurger \cite{Waldspurger2}.) Si $\G$ est le groupe des automorphismes lin\'eaires de $F^N$, 
on peut prendre pour $(\B, \T)$ sa paire usuelle. Alors, 
l'automorphisme involutif $\theta$ op\'erant par $g \mapsto g^{\theta}=J {}^t \- g^{-1} J^{-1}$ pr\'eserve la paire $(\B , \T)$ et son \'epinglage standard. Les racines de $(\T , \G)$ sont
donn\'ees par 
$$\alpha_{i,j} : (z_1 , \ldots , z_N ) \mapsto z_i / z_j  \ \ \ (i \neq j)$$
et les racines dans $\B$ correspondent \`a la base $(x_1 -x_2 , 
\ldots , x_{N-1} - x_N )$.
L'automorphisme $\theta$ envoie $\alpha_{i,j}$ sur $\alpha_{N+1-j , N+1-i}$.

Un \'el\'ement $z \in \T$ appartient \`a $\T^1$ si et seulement si $z_{(N+1)/2} = 1$ (quand
$N$ est impair) et $z_{N+1-i} = z_{i}^{-1}$ pour $i \neq (N+1)/2$. On note $\ell = \left[N/2\right]$.  
Rappelons que l'on a identifi\'e $J$ \`a une forme bilin\'eaire non d\'eg\'en\'er\'ee sur $F^N$; 
c'est une forme symplectique si $N$ est pair, quadratique si $N$ est impair. Le groupe $\G^1$ est 
alors la composante neutre de son groupe d'automorphismes; c'est un groupe quasi-d\'eploy\'e dont 
$\T^1$ est un sous-tore maximal. Le groupe de Weyl $W$ est en particulier identifi\'e \`a 
$\mathfrak{S}_{\ell} \rtimes \{ \pm 1 \}^{\ell}$.

Enfin, si $\alpha = \alpha_{i,j} \in R(\B,\T)$ on a 
$\alpha_{\rm res} \in R(\B^1 , \T^1)$ si $i \neq N+1-j$ et $\frac12 \alpha_{\rm res} \in R(\B^1 , \T^1)$ sinon. Et $\alpha_{\rm res}$ est de type $R_1$ si $i \neq N+1-j$ et $i$ et $j$ sont tous les deux diff\'erents de $(N+1)/2$, de type $R_2$ si $i$ ou $j$ est \'egal \`a $(N+1)/2$ et de type $R_3$ si 
$i=N+1-j$.

\subsection{Application norme} \label{sec:Norme}
Posons
$$(1-\theta ) \T := \{ t \theta (t^{-1}) \; : \; t \in \T \}.$$ 
Kottwitz et Shelstad d\'efinissent une application norme sur $\T$ comme
l'application quotient
$$\mathrm{N} : \T \rightarrow \T / (1-\theta ) \T.$$
Posons
$$\T^{\perp} = \{ t \in \T \; : \; t \theta (t) \ldots \theta^{d-1} (t) = 1 \}.$$
Il est clair que 
$$\T^{\perp} = (1-\theta ) \T := \{ t \theta (t^{-1}) \; : \; t \in \T \}$$ 
et que l'application 
$$\T^1 \times \T^{\perp} \rightarrow \T , \ \ \ \Psi (t,h) = t h^{-1} \theta (h) $$
est surjective et a un noyau fini. 

\'Etant donn\'e une racine $\alpha \in R(\B , \T)$, notons $\mathrm{N}\alpha$ la somme des racines dans 
la $\theta$-orbite de $\alpha$~:
$$\mathrm{N} \alpha = \sum_{i=0}^{d_{\alpha} - 1} \theta^i (\alpha )$$
le nombre de racines dans l'orbite \'etant $d_{\alpha}$. 
Le caract\`ere $\mathrm{N}\alpha$ de $\T$ se factorise en un caract\`ere de $\T / (1-\theta ) \T$.
Et les applications 
$$\T^1 \rightarrow \T \rightarrow \T / (1 - \theta ) \T$$
permettent de r\'ealiser $\alpha_{\rm res}$ et $\mathrm{N}\alpha$ comme des caract\`eres d'un m\^eme tore $\T^1$; on a alors~:
$$\mathrm{N}\alpha = d_{\alpha} \alpha_{\rm res}.$$
L'ensemble des caract\`eres $\mathrm{N}\alpha$ forme un syst\`eme de racines {\it r\'eduit} $\mathrm{N}R$ pour $\T^1$. 

L'automorphisme $\theta$ induit un automorphisme $\widehat{\theta}$ sur le dual $\widehat{G}$, voir \cite[\S 1.2]{KS}. Fixons un \'epinglage $(\mathcal{B} , \mathcal{T} , \{ \mathcal{X} \})$ de $\widehat{G}$
pr\'eserv\'e par $\widehat{\theta}$. Alors l'ensemble des racines restreintes $R_{\rm res} (\widehat{G} , \mathcal{T})$ s'identifie \`a 
$$R^{\vee}_{\rm res} = \{ (\alpha^{\vee} )_{\rm res} = \alpha^{\vee} |_{\widehat{T}_{\widehat{\theta}}} \; : \; \alpha^{\vee} \in R^{\vee} (G , T) \}.$$
Et  $(\alpha^{\vee} )_{\rm res} \in X^* (\widehat{T}^{\widehat{\theta}}) = X^* (\widehat{T})_{\widehat{\theta}}$ a pour coracine un \'el\'ement de $X^* (T)^{\theta}$. Si $\alpha_{\rm res}$ est de type $R_1$
ou $R_3$ alors la coracine de $(\alpha^{\vee} )_{\rm res}$ est $\mathrm{N}\alpha$ sinon (type $R_2$) c'est $2\mathrm{N}\alpha$, voir \cite[(1.3.9)]{KS}. L'ensemble de ces caract\`eres forme encore un syst\`eme de racines r\'eduit pour $T^1$. Notons $E$ le groupe d\'eploy\'e correspondant.

Notons qu'au niveau des alg\`ebres de Lie, il correspond \`a $\mathrm{N}$ une application norme
\begin{equation} \label{N1}
\mathrm{N} : \mathfrak{t}_{\CC} \rightarrow \mathfrak{t}_{\CC}^1
\end{equation}
\'egale \`a la projection sur $\mathfrak{t}^1_{\CC}$ suivant la d\'ecomposition $\mathfrak{t}_{\CC} = 
\mathfrak{t}^1_{\CC} \oplus (1-\theta) \mathfrak{t}_{\CC}$. 

Consid\'erons maintenant un \'el\'ement semisimple $\delta = \theta g \in \LL$. Notons qu'il revient au 
m\^eme de consid\'erer la classe de conjugaison de $\delta$ par des \'el\'ements de $\G$ ou la classe de $\theta$-conjugaison de l'\'el\'ement $\theta$-semisimple $g$. Il d\'ecoule alors par exemple de 
\cite[Lem. 3.2.A]{KS} que la classe de $\theta$-conjugaison de $g$ est repr\'esent\'ee par un \'el\'ement 
$t \in \T$ uniquement d\'efini modulo $W$.  \`A l'aide de l'application norme 
Kottwitz et Shelstad construisent ainsi une correspondance naturelle des classes de conjugaison semisimples dans $E$ vers les classes de $\theta$-conjugaison $\theta$-semisimples dans $\G$. 
On la note $\mathcal{A}$ comme Kottwitz et Shelstad, voir \cite[Thm. 3.3.A]{KS}. On notera
$\mathcal{N}$ sa r\'eciproque. 

\medskip
\noindent
{\it Exemple.} Soit $\G$ le groupe des automorphismes lin\'eaires de $\CC^N$ et $\theta$ l'automorphisme involutif op\'erant par $g \mapsto g^{\theta}=J {}^t \- g^{-1} J^{-1}$. Le tore $\T \cong (\CC^*)^{N}$ est diagonal et si $t=(t_1 , \ldots , t_{N}) \in T$ on a~:
$$t\theta (t^{-1}) = (t_1t_{N} , \ldots , t_N t_1).$$
Par cons\'equent, 
$$(1-\theta) \T =  \left\{
\begin{split}
& \{ (u_1 , \ldots , u_{\ell} , u_{\ell} , \ldots , u_1 ) \; : \; u_i \in \CC^* \} \cong (\CC^*)^{\ell}  \mbox{ si } N=2\ell \\ 
& \{ (u_1 , \ldots , u_{\ell} , u_{\ell +1} , u_{\ell} , \ldots , u_1 ) \; : \; u_i \in \CC^* \} \cong (\CC^*)^{\ell+1} & \mbox{ si } N=2\ell +1 .
\end{split}
\right.$$
et $\T / (1-\theta) \T \cong (\CC^* )^{\ell}$. De sorte que l'application norme
$$\mathrm{N} : \T \rightarrow  (\CC^* )^{\ell} \cong \T^1$$
associe \`a $t= (t_1 , \ldots , t_N )$ l'\'el\'ement $(t_1 / t_N , \ldots , t_{\ell} / t_{N-\ell +1})$.

Les racines $\mathrm{N}\alpha$ du syst\`eme de racines $\mathrm{N}R$ sont celles du groupe $\SO (2\ell +1 , \CC)$. Le groupe $E$ est quant \`a lui \'egal \`a $\SO (2\ell+1 , \CC)$ si $N=2\ell$ est pair et \`a $\Sp (2\ell , \CC)$ si $N=2\ell+1$ est impair. On note $T_E$ le tore diagonal (pour les formes
classiques d\'eploy\'ees) de $E$; de sorte que les \'el\'ements de $T_E$ s'\'ecrivent~:
\begin{equation*}
\begin{split}
& \mathrm{diag} (x_1 , \ldots , x_{\ell} , x_{\ell}^{-1} , \ldots , x_{1}^{-1})  \ \ \ \mbox{ si } N \mbox{ est impair}, \\
& \mathrm{diag} (x_1 , \ldots , x_{\ell} , 1 , x_{\ell}^{-1} , \ldots , x_{1}^{-1} )  \ \ \ \mbox{ si } N \mbox{ est pair}.
\end{split}
\end{equation*}
{\it Via} ces  coordonn\'ees $T_E$ et $T^1$ sont canoniquement isomorphes. 

Remarquons que $E$ est un sous-groupe endoscopique maximal du groupe $\G \rtimes \langle \theta \rangle$ (cf. \cite{Arthur}); son
groupe dual est $\widehat{E}= \Sp (N , \CC)$, resp. $\SO (N, \CC)$, naturellement plong\'e dans 
$\widehat{G} = \GL_N (\CC)$.

L'application $\mathcal{A}$ est explicitement d\'ecrite par Waldspurger dans \cite[\S III.2]{Waldspurger}.\footnote{La diff\'erence de signe ici est due au fait que notre $\theta$ pr\'eserve un \'epinglage.} Les classes de $\theta$-conjugaison $\theta$-semisimples de $\G$ sont repr\'esent\'ees par les 
\'el\'ements 
\begin{equation}
t = \mathrm{diag} (s , 1 )\in T  \  (s \in (\CC^* )^{\ell})
\end{equation}
modulo $W=\mathfrak{S}_{\ell} \rtimes \{\pm 1 \}^{\ell}$.
Les classes de conjugaison semisimples de $E$ sont quant \`a elles repr\'esent\'ees par 
\begin{equation}
\begin{split}
& t ' = \mathrm{diag} (x_1 , \ldots , x_{\ell} , x_{\ell}^{-1} , \ldots , x_{1}^{-1}) \in T_E \ \ \ \mbox{ si } N \mbox{ est impair}, \\
& t' = \mathrm{diag} (x_1 , \ldots , x_{\ell} , 1 , x_{\ell}^{-1} , \ldots , x_{1}^{-1} ) \in T_E \ \ \ \mbox{ si } N \mbox{ est pair}
\end{split}
\end{equation}
modulo $W=\mathfrak{S}_{\ell} \rtimes \{\pm 1 \}^{\ell}$ -- le groupe de Weyl de $E$.
On a alors $t = \mathcal{A} (t')$ si $s= (x_1 , \ldots , x_{\ell})$. L'application $\mathcal{A}$ est bijective et 
$\mathcal{N}$ est l'application r\'eciproque. En particulier en restriction \`a $T^1$, l'application 
$\mathcal{N}$ s'\'ecrit~:
\begin{multline}
\begin{split}
\mathrm{diag} (x_1 , \ldots , x_{\ell} ,  & x_{\ell}^{-1} , \ldots , x_{1}^{-1})  \in T^1 \\  & \mapsto  \mathrm{diag} (x_1^2 , \ldots , x_{\ell}^2 , 1 , x_{\ell}^{-2} , \ldots , x_{1}^{-2} ) \in T_E \ \ \ \mbox{ si } N \mbox{ est pair}, \\
\mathrm{diag} (x_1 , \ldots , x_{\ell} , & 1 , x_{\ell}^{-1} , \ldots , x_{1}^{-1} ) \in T^1 \\ & \mapsto 
\mathrm{diag} (x_1^2 , \ldots , x_{\ell}^2 , x_{\ell}^{-2} , \ldots , x_{1}^{-2} ) \in T_E \ \ \ \mbox{ si } N \mbox{ est impair}.
\end{split}
\end{multline}

\subsection{D\'enominateur de Weyl tordu} \label{1.4} Soit $G$ (resp. $T$, {\it etc}...) l'ensemble des points r\'eels de $\G$ (resp. $\T$, {\it etc}...) et 
soit $\g$ (resp. $\mathfrak{t}$, {\it etc}...) son alg\`ebre de Lie (r\'eelle). On note $\widetilde{T}^1$ le rev\^etement universel de $T^1$.

L'ensemble des racines restreintes non nulles $\alpha_{\rm res}$, pour $\alpha$ racine de $\T$ dans 
$\G$, forme un syst\`eme de racine (non r\'eduit) et $R_{\rm res} $ est un choix de racines positives. Si $\alpha_{\rm res}$ est une racine dans $R_{\rm res}$ on note $\g^{\alpha_{\rm res}}_{\CC}$ l'espace radiciel correspondant. On pose 
$$\rho = \frac12 \sum_{\alpha_{\rm res} \in R_{\rm res} } ( \dim \g^{\alpha_{\rm res}}_{\CC} )\alpha_{\rm res} \ \ \ \mbox{ et } \ \ \ \uu^+ = \sum_{\alpha_{\rm res} \in R_{\rm res}} \g^{\alpha_{\rm res}}_{\CC}.$$
Alors $\uu^+$ est une sous-alg\`ebre de 
$\g_{\CC}$. Comme $\theta$ laisse stable $\uu^+$ et fixe $T^1$, 
il lui correspond un {\it d\'enominateur de Weyl tordu}
$$\Delta_{\theta} = \Delta_{G, \theta} : \widetilde{T}^1 \rightarrow \CC,$$ 
\'egal -- par d\'efinition -- au d\'eterminant 
$$D_{\uu^+}^{\theta} (t) = \det \left( (1 - {\rm Ad} (\theta t))_{|\uu^+} \right)$$
multipli\'e par $-\rho$~:
$$\Delta_{\theta} (t) = e^{-\rho} (t) D_{\uu^+}^{\theta} (t).$$  
Formellement $\Delta_{\theta}$ est \'egal au produit 
$\prod_{\alpha_{\rm res} \in R_{\rm res}} \prod_{j=1}^{n_{\alpha_{\rm res}}} (e^{-\alpha_{\rm res} /2} - \lambda_{\alpha_{\rm res} , j} e^{\alpha_{\rm res} /2})$,
o\`u $\lambda_{\alpha_{\rm res},1}, \ldots , \lambda_{\alpha_{\rm res} , n_{\alpha_{\rm res}}}$ sont
les valeurs propres de $\theta$ dans $\g^{\alpha_{\rm res}}_{\CC}$, compt\'ees avec multiplicit\'es.

Si $V$ est l'espace d'une repr\'esentation de $\widetilde{T}^1$ on note $\det [1 - V]$ la repr\'esentation
virtuelle (c'est-\`a-dire un \'el\'ement du $K_0$ dans la cat\'egorie des repr\'esentations de $\widetilde{T}^1$) d\'efinie par la somme altern\'ee $\sum_i (-1)^i [\wedge^i V]$ des puissances ext\'erieures. C'est
multiplicatif dans le sens naturel suivant~:
\begin{equation} \label{detmult}
\det [1-V \oplus W] = \det [1- V] \otimes \det [1-W] .
\end{equation}

Puisque $\g_{\CC} / \mathfrak{t}_{\CC} = \uu^+ \oplus \uu^-$, l'identit\'e \eqref{detmult} implique que le caract\`ere de 
$\det[1 -  \g_{\CC} / \mathfrak{t}_{\CC}]$ est \'egal \`a $\prod_{\alpha_{\rm res} \in R_{\rm res}} (1 - e^{\alpha_{\rm res}}) \cdot \prod_{\alpha_{\rm res} \in R_{\rm res}} (1 - e^{-\alpha_{\rm res}})$. 
Comme $\theta$ fixe $T^1$, on obtient facilement~:
\begin{equation} \label{WD}
\Delta_{\theta}^2 (t) =   \frac{(-1)^{\dim  \uu^+}}{\det (1-\theta)_{|\mathfrak{t}_{\CC} / \mathfrak{t}^1_{\CC}}} 
\det (1 - {\rm Ad}( \theta t))_{| \g_{\CC} / \mathfrak{t}^1_{\CC}} .
\end{equation}

\medskip
\noindent
{\it Exemple.}(Voir Chenevier-Clozel \cite{ChenevierClozel}.) Soit $\G$ le groupe des automorphismes lin\'eaires de $\CC^N$ et $\theta$ l'automorphisme involutif op\'erant par $g \mapsto g^{\theta}=J {}^t \- g^{-1} J^{-1}$. On prend
$\mathfrak{u}^+$ \'egale au radical unipotent de l'alg\`ebre de Borel standard de $\mathrm{M}_N (\CC)$
et 
$$t = \left( 
\begin{array}{ccccc}
t_1 & & & & \\
& t_2 & & & \\
& & \ddots & & \\
& & & t_2^{-1} & \\
& & & & t_1^{-1} 
\end{array} \right).$$
On calcule les valeurs propres sur $\mathfrak{u}^+$ de l'endomorphisme 
$$X \mapsto - J \mathrm{Ad} (t^{-1}) {}^t \- X J^{-1}.$$
Un calcul simple donne alors
\begin{multline} \label{DenEx}
D_{\mathfrak{u}^+}^{\theta} (t) = 
 \left\{
\begin{split}
& \prod_{i=1}^{\ell} (1 - t_i^{2}) \prod_{1 \leq i < j \leq \ell} 
(1 - t_i^2 /t_j^2) (1- t_i^{2} t_j^{2})  \mbox{ si } N= 2\ell \\
& \prod_{i=1}^{\ell} (1-t_i^{4})  \prod_{1 \leq i < j \leq \ell} 
(1 - t_i^2 /t_j^2) (1- t_i^{2} t_j^{2})  \mbox{ si } N=2\ell +1 
\end{split} \right.
\end{multline}
qui n'est autre que le d\'enominateur de Weyl $\prod (1-e^{\alpha})$ --  le produit 
portant sur les racines positives du groupe $E$ \'egal \`a $\SO (2 \ell +1 , \CC)$ si $N=2\ell$ est pair et \`a $\Sp (2\ell , \CC )$ si $N=2\ell +1$ est impair -- \'evalu\'e en $(t_i^{2})$.  Le terme $e^{-\rho} (t)$ le transforme en $D_E := \prod (e^{\alpha /2} - e^{-\alpha /2})$ \'evalu\'e en $\mathcal{N} (t)$.

\subsection{Repr\'esentations de dimension finie}
Soit $X^* (\T)$ le groupe des caract\`eres de $\T$. Il contient le r\'eseau des poids de $\G$
que nous notons $X^*_{\rm p}$. On note 
$X^*_{{\rm p} +}$ le sous-ensemble des \'el\'ements de $X^*_{\rm p}$ qui sont dominants relativement \`a l'ordre d\'efini par la paire $(\B , \T)$. Les \'el\'ements de $X^*_{{\rm p} +}$ param\`etrent 
les repr\'esentations irr\'eductibles de dimension finie de $\G$~: soit $\Lambda \in X^*_{{\rm p} +}$, alors 
$\Lambda$ est le plus haut poids (relativement \`a la paire $(\B , \T)$) d'une repr\'esentation 
de dimension finie $\tau_{\Lambda}$ de $\G$.

L'application norme identifie $X^* (\T^{1})$ \`a un sous-groupe de $X^* (\T)$. Avec cette identification
$X^* (\T^1)$ co\"{\i}ncide avec le sous-groupe des caract\`eres invariants par $\theta$.  La repr\'esentation $\tau_{\Lambda}$ est $\theta$-invariante ($\tau_{\Lambda} \cong \tau_{\Lambda}^{\theta}$ o\`u $\tau_{\Lambda}^{\theta} = \tau \circ \theta$) si et seulement si $\Lambda \in X^* (\T^1)$. 

Soit $X_{\rm p}^{\rm res} (\T^1) =  X^*_{{\rm p}} \cap X^* (\T^1)$ et soit $\Lambda \in X_{\rm p}^{\rm res} (\T^1) \cap X^*_{{\rm p} +} $. 
Le choix d'un op\'erateur $A_{\theta}$ ($A_{\theta}^d = 1$) dans l'espace $V_{\Lambda}$ de 
$\tau_{\Lambda}$ entrela\c{c}ant $\tau_{\Lambda}$ et $\tau_{\Lambda} \circ \theta$ permet d'\'etendre 
$\tau_{\Lambda}$ en une repr\'esentation irr\'eductible
$\tau^+$ de $\G \rtimes \langle \theta \rangle$; on obtient par ce proc\'ed\'e toutes les repr\'esentations irr\'eductibles de
dimension finie de $\G \rtimes \langle \theta \rangle$ dont la restriction \`a $\G$ est irr\'eductible,
voir \cite{KnappVogan} pour plus de d\'etails sur les repr\'esentations de dimension finie de groupes
non-connexes. 

Notons $\Theta_{\tau_{\Lambda} , \theta}$ le caract\`ere de $\tau^+$ sur $L$. Alors 
\begin{equation} \label{DF}
\Theta_{\tau_{\Lambda} , \theta} (t) = \sum_{\mu \in X_{\rm p}^{\rm res} (\T^1 )} m_{\mu} e^{\mu} (t) \ \ \ (t \in T^1),
\end{equation}
o\`u $m_{\mu}$ est la trace de $\theta$ sur l'espace propre de la valeur propre $\mu$. Noter que les caract\`eres qui ne sont pas $\theta$-invariants ne contribuent pas \`a la trace tordue. 


\section{Caract\`eres tordus de $G$}

Dans cette section nous expliquons bri\`evement, en suivant Bouaziz \cite{Bouaziz}, comment \'etendre le travail de Harish-Chandra \cite{HC1} \`a $L$ --  l'ensemble des points r\'eels de $\LL$. Soit $G^+$ le groupe obtenu en
formant le produit semi-direct de $\langle \theta \rangle \cong \Z / d \Z$ par $G$. On verra $L$ comme
la composante connexe -- \'egale \`a $\theta G$ -- de $\theta$ dans $G^+$.

On a
$$(1 , g) (\theta , x) (1 , g^{-1}) = (\theta , g x \theta (g^{-1})) = (\theta , g x (g^{\theta})^{-1}).$$
\'Etudier l'action de $G$ sur $L$ revient donc \`a \'etudier l'action tordue de $G$ sur lui-m\^eme. Par transport de structure on d\'efinit les \'el\'ements \og$\theta$-semisimples\fg \
et \og$\theta$-r\'eguliers\fg \ dans $G$.

On identifie l'alg\`ebre enveloppante
$U(\mathfrak{g}_{\CC})$ avec l'alg\`ebre des op\'erateurs diff\'erentiels invariants \`a gauche sur $L$. 
On note $Z(\mathfrak{g}_{\CC})$ le centre de $U(\mathfrak{g}_{\CC})$; on le consid\`ere donc comme
une alg\`ebre d'op\'erateurs diff\'erentiels $G$-bi-invariants sur $L$.

Soit $\Theta$ une distribution $G$-invariante sur $L$, distribution propre 
pour $Z(\g_{\CC})$. D'apr\`es Bouaziz \cite[Thm. 2.1.1]{Bouaziz} $\Theta$ est une fonction analytique sur 
$L_{\rm reg}$ -- l'ensemble des \'el\'ements r\'eguliers de $L$. 

\subsection{} \label{ChgtAutom}
Soit $\delta \in L$ semisimple. Quitte \`a conjuguer $\delta$ par un \'el\'ement
de $\G$, on peut supposer que $\delta$ fixe la paire $(\B , \T)$. Alors ${\rm Ad}_{\LL} (\delta)$
et $\theta$ diff\`erent par un automorphisme int\'erieur qui fixe la paire $(\B , \T)$. Autrement dit~:
$${\rm Ad}_{\LL} (\delta ) = {\rm Ad}_{\G} (t) \circ \theta$$ 
pour un certain $t \in T$ et donc $T^{\delta} = T^{\theta}$. 

Dans la suite nous supposerons donc que $T^{\delta} = T^{\theta}$. Nous supposerons de plus 
que $\delta$ est {\it r\'egulier}. 

Pour $t\in T^1$ suffisamment petit, l'\'el\'ement $\delta t$ est encore r\'egulier dans $L$. 
L'extension recherch\'ee du th\'eor\`eme d'Harish-Chandra est la d\'etermination de 
$t \mapsto \Theta (\delta t)$ sur un petit voisinage de l'identit\'e dans $T^1$.

\subsection{} Comme l'alg\`ebre $\mathfrak{t}_{\CC}$ est ab\'elienne, on peut identifier 
$U(\mathfrak{t}_{\CC})= Z(\mathfrak{t}_{\CC})$ avec l'alg\`ebre sym\'etrique $S(\mathfrak{t}_{\CC})$.
De m\^eme on identifie $Z(\mathfrak{t}^1_{\CC})$ avec $S(\mathfrak{t}^1_{\CC})$. 

L'application \eqref{N1} s'\'etend naturellement en une application norme 
\begin{equation} \label{N2}
\mathrm{N} : Z(\mathfrak{t}_{\CC}) \rightarrow Z(\mathfrak{t}^1_{\CC}).
\end{equation} 
On a par ailleurs des homomorphismes d'Harish-Chandra
$$Z(\mathfrak{g}_{\CC}) \rightarrow Z( \mathfrak{t}_{\CC}) \ \ \ \mbox{ et } \ \ \ 
Z(\mathfrak{g}^1_{\CC}) \rightarrow Z( \mathfrak{t}^1_{\CC})$$
que nous noterons tous les deux $\varphi$. Rappelons que $\varphi$ est un isomorphisme sur son
image $S(\mathfrak{t}_{\CC})^{W(\G , \T)}$ (resp. $S(\mathfrak{t}^1_{\CC})^W$) constitu\'ee des
polyn\^omes invariants par l'action du groupe de Weyl.

Par dualit\'e tout caract\`ere de $Z(\mathfrak{g}_{\CC})$ est de la forme 
\begin{equation} \label{dualite}
\chi_{\lambda} : Z(\mathfrak{g}_{\CC}) \rightarrow \C, \ \ \ \mbox{ avec } \chi_{\lambda} (Z) = \varphi (Z) (\lambda ),
\end{equation}
pour un certain $\lambda \in \mathfrak{t}_{\CC}^*$, 
et deux de ces caract\`eres co\"{\i}ncident pr\'ecis\'ement 
quand leur param\`etres appartiennent \`a la m\^eme $W(\G,\T)$-orbite. L'automorphisme 
$\theta$ op\`ere naturellement sur l'ensemble des caract\`eres de $Z(\mathfrak{g}_{\CC})$ et 
sur $\mathfrak{t}^*_{\CC}$. Ces actions sont compatibles avec \eqref{dualite}. 
Noter que l'application norme
$\mathrm{N}$ induit une injection de $\mathfrak{t}^{1  *}_{\CC}$ dans $\mathfrak{t}^*_{\CC}$; son image
est constitu\'e des \'el\'ements $\theta$-stables dans $\mathfrak{t}^*_{\CC}$.

Remarquons finalement que l'application \eqref{N2} s'\'etend de mani\`ere unique en 
une application norme $\mathrm{N} : Z( \mathfrak{g}_{\CC}) \rightarrow Z( \mathfrak{g}^1_{\CC})$ 
de sorte que le diagramme
$$\begin{CD}
Z(\mathfrak{t}^1_{\CC}) @> \varphi >>  Z(\mathfrak{g}^1_{\CC}) \\
@AA\mathrm{N}A  @AA\mathrm{N}A  \\
Z(\mathfrak{t}_{\CC}) @> \varphi >>  Z(\mathfrak{g}^1_{\CC}) 
\end{CD}
$$
soit commutatif. Un caract\`ere $\chi_{\lambda}$ est alors $\theta$-stable si et seulement s'il existe 
$\lambda^1 \in \mathfrak{t}^{1 *}_{\CC}$ tel que $\chi_{\lambda} = \chi_{\lambda^1} \circ N$; la $W$-orbite de $\lambda^1$ est alors uniquement d\'efinie.

\subsection{Partie radiale} Pour $t \in T^1$, posons~:
$$\nu_{\delta} (t) = \det \left( 1 - {\rm Ad} (\delta t) \right)_{|\g_{\CC} / \mathfrak{t}^1_{\CC}}.$$
C'est une fonction analytique sur $T^1$. On pose 
$$\Omega^1 = \{ t \in T^1 \; : \; \nu_{\delta} (t) \neq 0\}.$$
C'est un ouvert dense dans $T^1$. Alors la sous-vari\'et\'e $\delta \Omega^1$ est localement
ferm\'ee dans $L$, constitu\'ee d'\'el\'ements r\'eguliers et l'application 
$$\psi : L \times \delta \Omega^1 \rightarrow L ; \ (x,y) \mapsto xyx^{-1}$$
est submersive. 
On dispose en outre 
d'une mesure naturelle sur les fibres (lisses) de l'application 
$\psi$. L'int\'egration le long des fibres nous donne une application de $C_c^{\infty} (L \times \delta \Omega^1 )$ dans $C_c^{\infty}$. Notons $\psi^*$ l'application duale sur les distributions.
Harish-Chandra \cite{HC1} montre que $\psi^* (\Theta) =1 \otimes \Theta_{| \delta \Omega^1}$ -- la
restriction \'etant prise comme fonction analytique. 

En fait, si $D$ est un op\'erateur diff\'erentiel bi-$G$-invariant dans $L$, 
Harish-Chandra \cite{HC1} montre 
de plus qu'il existe un op\'erateur $\Delta (D)$ sur $\delta \Omega^1$ satisfaisant 
\begin{equation} \label{Delta}
(Df)_{|  \delta \Omega^1} = \Delta (D) (f_{|  \delta \Omega^1} ),
\end{equation}
pour toute fonction $f$ $C^{\infty}$, $G$-invariante dans un voisinage de $ \delta \Omega^1$.
L'op\'erateur $\Delta (D)$ est appel\'e {\it partie radiale} de l'op\'erateur $D$.

Le th\'eor\`eme suivant est d\'emontr\'e par Bouaziz \cite[Thm. 2.4.1]{Bouaziz}. 
On choisit un voisinage ouvert connexe $\mathcal{V}$ de
$0$ dans $\mathfrak{t}^1$ tel que $\mathcal{W} = \exp (\mathcal{V})$ soit ouvert et inclus dans $\Omega^1$. On suppose de plus que pour tout $t \in \mathcal{W}$, l'\'el\'ement $\delta t$ est r\'egulier et que l'application $\exp H \mapsto e^{\rho (H)}$ est bien d\'efinie dans $\mathcal{W}$. 

\begin{thm} \label{radiale}
Soit $z \in Z(\g_{\CC})$. L'op\'erateur $\Delta (z)$ est uniquement d\'efini sur $\delta \mathcal{W}$, et~:
$$\Delta (z) = |\nu_{\delta} |^{-1/2} (\varphi \circ N ) (z) \circ |\nu_{\delta} |^{1/2}.$$
\end{thm}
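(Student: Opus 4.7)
The strategy is to extend Harish-Chandra's classical radial-part computation \cite{HC1} to the twisted setting, essentially adapting the arguments of Bouaziz \cite{Bouaziz}. The proof splits naturally into three stages.

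First, I would set up local coordinates near a regular element $\delta t_0 \in \delta\mathcal{W}$. The regularity of $\delta t_0$ is equivalent to the invertibility of $1 - \Ad(\delta t_0)$ on $\g_\CC/\mathfrak{t}^1_\CC$, which is exactly the non-vanishing condition $\nu_\delta(t_0)\neq 0$ defining $\Omega^1$. A $\theta$-stable choice of complement $\mathfrak{m}_\CC$ to $\mathfrak{t}^1_\CC$ in $\g_\CC$ realizes the map $\psi:G\times\delta\mathcal{W}\to L$ as a submersion with vertical tangent $\mathfrak{m}_\CC$ at $(1,\delta t_0)$. Any $G$-invariant $C^\infty$ function $f$ defined near $\delta\mathcal{W}$ is then determined by $f_{|\delta\mathcal{W}}$, and its higher Taylor data in the $\mathfrak{m}_\CC$-directions are recovered from this restriction using the inverse $(1 - \Ad(\delta t))^{-1}$.

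Second, I would translate the action of $Z(\g_\CC)$ into a radial differential operator. Choosing a Poincar\'e--Birkhoff--Witt basis adapted to $\g_\CC = \mathfrak{t}^1_\CC \oplus \mathfrak{m}_\CC$, every $z\in Z(\g_\CC)$ can be written, modulo the left ideal generated by $\{Y - \Ad(\delta t)(Y):Y\in\mathfrak{m}_\CC\}$, as an element of $U(\mathfrak{t}^1_\CC) = S(\mathfrak{t}^1_\CC)$. Since this ideal annihilates $G$-invariant functions at $\delta t$, this yields a well-defined radial projection $Z(\g_\CC) \to S(\mathfrak{t}^1_\CC)$.

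Third, I would identify this radial projection with $\varphi\circ\mathrm{N}$ and account for the $|\nu_\delta|^{1/2}$ factor. The norm $\mathrm{N}:Z(\g_\CC)\to Z(\g^1_\CC)$ constructed in \S\ref{sec:Norme} is the natural way to transfer a central element of $U(\g_\CC)$ to a central element of $U(\g^1_\CC)$; since $\Ad(\delta t) = \Ad(t)\circ\theta$ combines a classical rotation with the twist $\theta$, the twisted radial projection must factor through $\mathrm{N}$. Composing with the classical Harish-Chandra homomorphism $\varphi$ for the connected reductive group $G^1$ then lands in $S(\mathfrak{t}^1_\CC)^W$, as expected. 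The conjugation by $|\nu_\delta|^{1/2}$ is Harish-Chandra's standard symmetrization trick: by \eqref{WD}, $|\nu_\delta|$ is (up to the nonzero factor $\det(1-\theta)_{|\mathfrak{t}_\CC/\mathfrak{t}^1_\CC}$) the squared Jacobian of the change of variables $(G/T^1)\times\delta\mathcal{W}\to L$, and taking square roots converts the raw projection into the formally self-adjoint operator stated.

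The main obstacle is the factorization through $\mathrm{N}$ in the third stage: one must compare two algebra homomorphisms $Z(\g_\CC)\to S(\mathfrak{t}^1_\CC)$ and prove their equality. A natural way is to test on a family of elements whose action is explicitly computable, for example on Casimirs, or equivalently by comparing eigenvalues on the characters of $\theta$-invariant finite-dimensional representations as in \eqref{DF}. Granting this identification, the remainder of the proof is a direct twisted adaptation of Harish-Chandra's original computation, with $\nu_\delta$ playing the role of the Weyl denominator squared.
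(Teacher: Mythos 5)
The paper does not prove this statement: it is quoted directly from Bouaziz \cite[Thm.~2.4.1]{Bouaziz}, so there is no proof in the text to compare against. Your three-stage sketch is a plausible reconstruction of the standard Harish-Chandra radial-part argument transported to the twisted setting, which is surely the shape of Bouaziz's proof, and stages 1 and 2 (the local submersion picture near $\delta t_0$ and the PBW-based reduction of $z\in Z(\g_\CC)$ modulo the left ideal generated by $Y - \mathrm{Ad}(\delta t)Y$) are sound.

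Two points are worth flagging. First, the Jacobian bookkeeping in stage 3 is inverted: the Jacobian of $(G/T^1)\times\delta\mathcal{W}\to L$, $(\bar g,\delta s)\mapsto g(\delta s)g^{-1}$, is $|\nu_\delta(s)|$ itself, not its square; what \eqref{WD} says is that $|\nu_\delta|^{1/2}$ is (up to the constant factor $|\det(1-\theta)_{|\mathfrak{t}_\CC/\mathfrak{t}^1_\CC}|^{1/2}$) the twisted Weyl denominator $|\Delta_\theta|$. Calling $|\nu_\delta|$ a ``squared Jacobian'' reverses this relation. Second, and more seriously, stage 3 leaves the crux of the theorem --- the identification of the raw radial projection, after $|\nu_\delta|^{1/2}$-conjugation, with $\varphi\circ\mathrm{N}$ --- as a heuristic together with two undeveloped remedies. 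Testing on Casimirs does not suffice, since $Z(\g_\CC)$ need not be generated by Casimirs and even so both sides would still have to be computed explicitly on them. Testing on twisted characters of $\theta$-invariant finite-dimensional representations via \eqref{DF} is closer to how one actually proceeds, but then you must show that these characters, whose restriction to $T^1$ is $\sum_\mu m_\mu e^\mu$ over $\theta$-invariant weights $\mu$, together with the eigenvalue formulas for $Z(\g_\CC)$ on them, determine the target homomorphism $Z(\g_\CC)\to S(\mathfrak{t}^1_\CC)$ uniquely; this is exactly where the defining compatibility of $\mathrm{N}$ with the Harish-Chandra homomorphisms for $G$ and $G^1$ (the commutative square of the paper) must be fed in. As written the sketch identifies the right obstacle but does not cross it, so it falls short of the theorem's actual content.
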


Supposons maintenant que $Z(\mathfrak{g})$ agisse sur $\Theta$ par $\chi_{\lambda}$
pour un certain $\lambda \in \mathfrak{t}^*$. Le caract\`ere $\chi_{\lambda}$ est n\'ecessairement 
$\theta$-stable \'egal \`a $\chi_{\lambda^1} \circ N$. Notons 
$$F ( t ) = |\nu_{\delta} (t) |^{1/2} \Theta (\delta t) \ \ \ (t \in \mathcal{W}).$$
Il d\'ecoule du th\'eor\`eme \ref{radiale} -- voir \cite[Cor. 2.4.11]{Bouaziz} -- que pour tout 
$z \in Z(\mathfrak{g}_{\CC})$, $(\varphi \circ N ) (z) F = \chi_{\lambda} (z) F$.
La fonction $F$ v\'erifie donc les \'equations diff\'erentielles~:
\begin{equation} \label{ED}
z F = \lambda^1 (z) F \ \ \ \left( z \in S(\mathfrak{t}_{\CC}^1 )^W\right).
\end{equation}
Ces \'equations sont \'etudi\'ees et r\'esolues par Harish-Chandra dans \cite[pp. 130--133]{HC1}. Une solution de \eqref{ED} sur $\mathcal{W}$ est un polyn\^ome exponentiel~:
$$F(\exp H) = \sum_{w \in W} \sum_{w \in W} P_w (H) e^{\lambda^1 (w H)},$$
o\`u $H \in \mathcal{V}$ et les coefficients $P_{w} (H)$ sont des polyn\^omes en $H$ de degr\'e strictement inf\'erieur \`a l'ordre de $W_{\lambda^1}$ -- le stabilisateur de $\lambda^1$ dans $W$.

\subsection{Repr\'esentations $\theta$-stables} 
Consid\'erons maintenant une repr\'esentation admissible $(\pi, \mathcal{H}_{\pi})$ de $G$ qui poss\`ede un caract\`ere infinit\'esimal $\chi_{\lambda}$ ($\lambda \in \mathfrak{t}_{\CC}^*$). 
Supposons $\pi$ $\theta$-invariante ($\pi \cong \pi^{\theta}$ o\`u $\pi^{\theta} = \pi \circ \theta$) et 
notons $V$ son module d'Harish-Chandra. 
Le choix d'un op\'erateur d'entrelacement $A_{\theta} : \mathcal{H}_{\pi} \rightarrow \mathcal{H}_{\pi}$ ($A_{\theta}^d = 1$) entrela\c{c}ant $\pi$ et $\pi \circ \theta$ permet d'\'etendre $\pi$ en une 
repr\'esentation $\pi^+$ de $G^+$. Le caract\`ere de $\pi^+$ sur
$L = \theta G$ est un distribution $G$-invariante, distribution propre pour $Z(\mathfrak{g}_{\CC})$.
Les r\'esultats qui pr\'ec\`edent s'appliquent donc. Notons
$$\Theta_{\pi , \theta} (g) = \Theta_{\pi^+} (\theta g) \ \ \ \ \ \ (g \in G);$$
c'est {\it le caract\`ere tordu} de $\pi$.

\begin{thm} \label{T26}
Soit $x\in T^{\theta}$ un \'el\'ement $\theta$-r\'egulier dans $G$. Alors, il existe un voisinage $\mathcal{V}$ de $0$ dans $\mathfrak{t}^1$ tel que pour tout $H \in \mathcal{V}$ on a~:
$$\left[ |\Delta_{\theta}| \Theta_{\theta , \pi} \right] ( x \exp H) = \sum_{w \in W}  c_{\lambda} (x, w ) e^{\lambda^1 (w H)},$$
o\`u les $c_{\lambda} (x , w)$ sont des constantes dans $\CC$.
\end{thm}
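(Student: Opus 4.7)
Le plan est d'appliquer directement les résultats des sous-sections précédentes à l'élément $\delta = \theta x \in L$. Comme $x \in T^\theta$, l'automorphisme $\mathrm{Ad}_\LL(\delta)$ coïncide avec $\mathrm{Ad}_\G(x)\circ \theta$, qui préserve la paire $(\B,\T)$ et son épinglage; la $\theta$-régularité de $x$ dans $G$ équivaut alors à la régularité de $\delta$ dans $L$, avec $T^\delta = T^\theta$ et donc $(T^\delta)^\circ = T^1$. La $\theta$-invariance de $\pi$ entraîne que $\chi_\lambda$ est $\theta$-stable et s'écrit $\chi_{\lambda^1} \circ \mathrm{N}$ pour un $\lambda^1 \in \mathfrak{t}^{1*}_\CC$ bien défini modulo $W$.

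La première étape consiste à former, pour $H$ dans un voisinage $\mathcal{V}$ assez petit de $0$ dans $\mathfrak{t}^1$, la fonction
$$F(H) = |\nu_\delta(\exp H)|^{1/2}\, \Theta_{\theta,\pi}(x \exp H) = |\nu_\delta(\exp H)|^{1/2}\, \Theta_{\pi^+}(\delta \exp H).$$
Le théorème \ref{radiale}, combiné avec le fait que $\Theta_{\theta,\pi}$ est $Z(\mathfrak{g}_\CC)$-propre de caractère $\chi_\lambda = \chi_{\lambda^1}\circ \mathrm{N}$, donne que $F$ satisfait les équations différentielles \eqref{ED}. La résolution de ce système (Harish-Chandra, \cite[pp.~130--133]{HC1}) fournit alors des polynômes $P_w$ sur $\mathfrak{t}^1$, de degré strictement inférieur à l'ordre de $W_{\lambda^1}$, tels que $F(\exp H) = \sum_{w\in W} P_w(H)\, e^{\lambda^1(wH)}$ sur $\mathcal{V}$.

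L'étape la plus délicate, et le véritable obstacle, est de montrer que les $P_w$ sont en réalité \emph{constants}. Ce fait n'est pas une conséquence directe des équations différentielles et requiert d'exploiter que $\Theta_{\theta,\pi}$ provient d'une représentation admissible. L'approche que j'adopterais serait de décomposer $\pi^+$, dans le groupe de Grothendieck des représentations admissibles de $G^+$, en combinaison linéaire finie de modules standards tordus (via la classification de Langlands étendue au cas non connexe, cf. \cite{KnappVogan}); chaque caractère standard tordu se calcule explicitement par descente parabolique tordue et s'exprime sur le tore par une somme d'exponentielles à coefficients constants, ce qui force $P_w$ à être constant. Une alternative plus analytique consisterait à invoquer la locale $L^1$-bornitude du caractère tordu (établie par Bouaziz \cite{Bouaziz}) pour exclure tout terme polynomial non trivial dans le développement au voisinage d'un point régulier.

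Il ne reste alors qu'à remplacer $|\nu_\delta|^{1/2}$ par $|\Delta_\theta|$. La formule \eqref{WD} appliquée à $xt$ (ce qui est licite puisque $\mathrm{Ad}(x)$ commute à $\theta$ pour $x \in T^\theta$, et que $\delta = \theta x$ donne $\det(1-\mathrm{Ad}(\theta\cdot xt))_{|\mathfrak{g}_\CC/\mathfrak{t}^1_\CC} = \nu_\delta(t)$) donne
$$\Delta_\theta(x\exp H)^2 = c\cdot \nu_\delta(\exp H)$$
avec $c = (-1)^{\dim \mathfrak{u}^+}/\det(1-\theta)_{|\mathfrak{t}_\CC/\mathfrak{t}^1_\CC}$ indépendant de $H$. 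En prenant valeur absolue et racine carrée, on obtient $|\Delta_\theta(x\exp H)| = |c|^{1/2}|\nu_\delta(\exp H)|^{1/2}$, et en posant $c_\lambda(x,w) = |c|^{1/2} P_w$ on récupère la formule cherchée
$$\bigl[|\Delta_\theta|\,\Theta_{\theta,\pi}\bigr](x\exp H) = \sum_{w\in W} c_\lambda(x,w)\, e^{\lambda^1(wH)}.$$
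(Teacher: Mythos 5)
Votre ossature est correcte et coïncide avec celle du texte : poser $\delta = \theta x$, invoquer le théorème \ref{radiale} de Bouaziz pour obtenir les équations différentielles \eqref{ED}, appliquer la résolution de Harish-Chandra pour écrire $F(\exp H)=\sum_w P_w(H)e^{\lambda^1(wH)}$ avec $\deg P_w < |W_{\lambda^1}|$, et conclure en remplaçant $|\nu_\delta|^{1/2}$ par $|\Delta_\theta|$ via \eqref{WD}. Mais l'étape que vous identifiez, à juste titre, comme cruciale --- montrer que les $P_w$ sont constants --- est traitée par deux alternatives qui s'écartent de l'article et posent problème.

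Votre variante « analytique » (locale $L^1$-bornitude) ne fonctionne pas : au voisinage d'un point $\theta$-régulier, $\Theta_{\theta,\pi}$ est déjà une fonction analytique et donc localement bornée et $L^1$ quels que soient les $P_w$ ; un terme $H\,e^{\lambda^1(wH)}$ est parfaitement $L^1$ au voisinage de l'origine. Les arguments de croissance de Harish-Chandra interviennent au voisinage des éléments singuliers, pas pour éliminer des facteurs polynomiaux sur l'ouvert régulier. Votre variante « algébrique » (décomposition de $\pi^+$ en modules standards tordus) est plus prometteuse mais comporte deux difficultés que vous ne traitez pas : d'une part, dans le texte la formule explicite du caractère tordu d'une induite (lemme \ref{Hirai} et surtout \eqref{59}) est elle-même déduite du théorème \ref{T26}, ce qui crée une circularité si on ne réorganise pas l'argument et si on ne redémontre pas directement un analogue tordu de la formule de Hirai ; d'autre part, pour un groupe réel général, les modules standards sont induits depuis des séries discrètes de Levi non compacts, et établir que leurs caractères tordus ont localement des coefficients constants demanderait une récurrence sur la dimension --- cela fonctionne sans douleur pour les groupes complexes, mais le théorème \ref{T26} est énoncé en toute généralité.

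L'article procède autrement, par l'astuce de Fomin--Shapovalov : on choisit $\Lambda\in X_{\rm p}^{\rm res}(\T^1)\cap X^*_{{\rm p}+}$ avec $W_\Lambda$ trivial et ${\rm Re}(\lambda^1)$ dominant, et on tensorise $\pi$ par $\tau_\Lambda$. Comme $\Theta_{\pi\otimes\tau,\theta}=\Theta_{\tau,\theta}\,\Theta_{\pi,\theta}$ et que $\pi\otimes\tau$ est de longueur finie avec facteurs de composition $\theta$-stables $\pi_1,\dots,\pi_r$, on peut comparer terme à terme les développements exponentiels des deux membres de \eqref{2} : le stabilisateur $W_{\lambda^1+\Lambda}$ étant trivial, les polynômes $P_{w,i}$ associés aux $\pi_i$ sont de degré nul, ce qui force chaque $P_w$ à être constant. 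Cet argument est plus élémentaire que le recours à la classification de Langlands tordue, n'a aucune circularité et s'applique uniformément au cas réel. Je vous conseille d'intégrer cette étape à votre rédaction plutôt que les deux alternatives proposées.
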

\begin{proof} Posons $\delta = \theta x \in L$. C'est un \'el\'ement semisimple r\'egulier. On conserve les notations introduites ci-dessus de sorte que $\mathcal{V}$ est d\'ej\`a d\'efini et que pour $H \in 
\mathcal{V}$, on a~: 
\begin{equation*}
\begin{split}
F(\exp H) & :=  |\nu_{\delta} ( \exp H) |^{1/2} \Theta_{\pi^+} (\delta \exp H) \\
& = \sum_{w \in W} P_w (H) e^{\lambda^1 (w H)},\\
\end{split}
\end{equation*}
o\`u $H \in \mathcal{V}$ et les coefficients $P_{w} (H)$ sont des polyn\^omes en $H$ de degr\'e strictement inf\'erieur \`a l'ordre de $W_{\lambda^1}$ -- le stabilisateur de $\lambda$ dans $W$.

L'identit\'e \eqref{WD} implique qu'il existe une constante $c$ telle que 
$$|\Delta_{\theta} (x \exp H)| = c |\nu_{\delta} ( \exp H) |^{1/2} .$$
Pour conclure la d\'emonstration du th\'eor\`eme il reste donc \`a faire voir que 
les coefficients $P_w (H)$ sont des constantes. Pour ce faire nous reprenons une id\'ee 
de Fomin et Shapovalov \cite{FS}. 

Soit $\tau = \tau_{\Lambda}$ ($\Lambda \in X_{\rm p}^{\rm res} (\T^1) \cap X_{{\rm p}+}^*$) 
une repr\'esentation de dimension finie de $\G$ invariante par $\theta$. Alors
la repr\'esentation $\pi \otimes \tau$ est encore admissible et $\theta$-invariante. Le choix
d'un op\'erateur entrela\c{c}ant $\tau$ et $\tau \circ \theta$ permet en outre de parler des
caract\`eres tordus $\Theta_{\pi \otimes \tau , \theta}$ et $\Theta_{\tau , \theta}$. Et le caract\`ere tordu 
$\Theta_{\pi \otimes \tau , \theta}$ est \'egal au produit des caract\`eres tordus $\Theta_{\pi , \theta}$ et $\Theta_{\tau , \theta}$ de $\pi$ et $\tau$~:
\begin{equation} \label{1}
\Theta_{\pi \otimes \tau , \theta} = \Theta_{\tau , \theta} \Theta_{\pi , \theta}.
\end{equation}

D'un autre c\^ot\'e $\pi \otimes \tau$ est de longueur finie (voir \cite[Prop. 10.41]{Knapp}); notons 
$\pi_1 , \ldots , \pi_n$ la suite de Jordan-H\"older de $\pi \otimes \tau$.
Puisque $\pi \otimes \tau$ est $\theta$-invariante, l'unicit\'e des facteurs de composition implique 
que quitte \`a r\'eordonner les $\pi_j$ on peut supposer que les
repr\'esentations $\pi_1 , \ldots , \pi_r$, avec $r\leq n$, sont $\theta$-invariantes (\'el\'ements diagonaux) 
et que les autres ne contribuent pas au caract\`ere tordu. De sorte que 
le caract\`ere tordu de $\pi \otimes \tau$ se d\'ecompose
en une combinaison lin\'eaire des caract\`eres tordus des sous-quotients $\theta$-stables $\pi_1 , \ldots , \pi_r$~:
\begin{equation} \label{2}
\Theta_{\pi \otimes \tau , \theta} = \Theta_{\pi_1 , \theta} + \ldots + \Theta_{\pi_r , \theta}.
\end{equation}
Notons que nous n'avons pas regroup\'e les repr\'esentations \'equivalentes. Si on regroupe par $\pi_i$ \'equivalentes,
les \og multiplicit\'es \fg \ sont dans $\ZZ [\zeta_d ]$ o\`u $d$ d\'esigne toujours l'ordre de $\theta$. 

Il d\'ecoule de \eqref{1} et de \eqref{DF} que pour tout $H \in \mathcal{V}$, le caract\`ere tordu de
$\pi \otimes \tau$ est donn\'e par
\begin{equation*} 
\begin{split}
\Theta_{\pi , \theta} (x \exp H ) \Theta_{\tau , \theta} (x\exp H) & = \sum_{w \in W} P_w (H) e^{\lambda^1 (w H)} \cdot \sum_{\mu \in X_{\rm p}^{\rm res} (\T^1 )} e^{\mu} (x) m_{\mu} e^{\mu (H)} \\
& = \sum_{w \in W} \sum_{\mu \in X_{\rm p}^{\rm res} (\T^1 )} m_{\mu} e^{\mu} (x) P_w (H) e^{(w^{-1} \lambda^1 + \mu) (H)}  \\
& = \sum_{w \in W} \ \sum_{\mu \in X_{\rm p}^{\rm res} (\T^1 )} m_{w^{-1} \mu} e^{w^{-1} \mu} (x) P_w (H) e^{(\lambda^1 + \mu) (wH)}. 
\end{split}
\end{equation*}
D'un autre c\^ot\'e, on peut \'ecrire~:
$$\sum_{i=1}^r  \Theta_{\pi_i , \theta} (x \exp H) =  \sum_{i=1}^r \sum_{w \in W} P_{w , i} (H) e^{\lambda_i^1 (wH)} .$$

Il d\'ecoule en particulier \eqref{2} que pour tout $H \in \mathcal{V}$, on a~:
\begin{equation} \label{3}
 \sum_{w \in W} \ \sum_{\mu \in X_{\rm p}^{\rm res} (\T^1 )} m_{w^{-1} \mu} e^{w^{-1} \mu} (x) P_w (H) e^{(\lambda^1 + \mu) (wH)} = \sum_{i=1}^r \sum_{w \in W} P_{w , i} (H) e^{\lambda_i^1 (wH)} .
\end{equation}

On peut supposer $\mathrm{Re} (\lambda^1 )$ et tous les $\mathrm{Re} (\lambda_i^1)$ ($i=1 , \ldots , r$) dominants. On 
peut en outre choisir $\Lambda$ de telle sorte que le stabilisateur $W_{\Lambda} \subset W$ soit
trivial. Notons que dans ce cas $\lambda^1 + \Lambda$ est encore dominant et que le stabilisateur
$W_{\lambda^1 + \Lambda}$ est encore trivial. Il d\'ecoule alors de \eqref{3} que 
\begin{equation}
m_{w^{-1} \mu} e^{w^{-1} \Lambda} (x)  P_w (H) = \sum_i P_{w , i } (H).
\end{equation}
Ici la somme porte sur les indices $i$ pour lesquels $\lambda_i^1 =  \lambda^1 + \Lambda$. Mais
le stabilisateur $W_{\lambda^1 + \Lambda}$ est encore trivial. Les polyn\^omes $P_{w , i}$ 
sont donc des constantes et le th\'eor\`eme est d\'emontr\'e.
\end{proof}

\subsection{} 
Soit $K$ un sous-groupe compact maximal de $G$ et une d\'ecomposition de Cartan 
\begin{equation} \label{decCartan}
\mathfrak{g} = \mathfrak{k} \oplus \mathfrak{p}
\end{equation} 
de sorte que l'involution de Cartan correspondante $\theta_{\rm Cartan}$ commute \`a $\theta$ (un tel
$K$ existe car $\theta$, d'ordre fini, fixe un point de l'espace sym\'etrique de $G$). Alors
la d\'ecomposition de Cartan \eqref{decCartan} est $\theta$-invariante.  
Notons $\mathfrak{t}_c^1 = \mathfrak{t}^1 \cap \mathfrak{k}$ et $\mathfrak{a}^1 = \mathfrak{t}^1
\cap \mathfrak{p}$. Soit $T_c^1$ et $A^1$ les sous-groupes analytiques de $G$ correspondants; on a~\footnote{Rappelons que $T^1$ est connexe.}
$T^1=T_c^1 A^1$ avec $T_c \subset K$. Une racine restreinte $\alpha_{\rm res}$ est dite
r\'eelle (resp. imaginaire) si $\alpha_{\rm res}$ est nulle sur $\mathfrak{t}_c^1$ (resp. $\mathfrak{a}^1$).
On note 
$$\mathfrak{a}^{1 -} = \{ H \in \mathfrak{a}^1 \; : \; \alpha_{\rm res} (H) < 0, \ \forall \alpha_{\rm res} \in R_{\rm res} \mbox{ non-imaginaire} \}.$$
Noter que l'ensemble des \'el\'ements $\theta$-r\'eguliers dans $T_c^1 \exp \mathfrak{a}^{1 -}$ n'est pas connexe.
Dans ce paragraphe on cherche \`a prolonger l'identit\'e du th\'eor\`eme \ref{T26} \`a tout cet ensemble.

D'apr\`es \cite[Lem. 3.6.3]{Bouaziz}, il existe une fonction localement constante 
$\varepsilon$ sur l'ensemble des \'el\'ements $\theta$-r\'eguliers dans $T^1$ telle que 
\begin{equation} \label{3627}
|\Delta_{\theta} (\exp H) | = \varepsilon (\exp H) e^{-\rho (H)} D_{\uu^+}^{\theta} (\exp H )
\end{equation} 
pour tout $H \in \mathfrak{t}^1$ tel que $\exp H$ soit $\theta$-r\'egulier.

Soit $\mathcal{V}$ un ouvert connexe dans $\mathfrak{t}_c^1 + \mathfrak{a}^{1,-}$. On fixe $H_0 \in 
\mathcal{V}$. D'apr\`es le th\'eor\`eme \ref{T26} et \eqref{3627} on a alors~:
$$\varepsilon (\exp H_0) e^{-\rho (H)} \left[ D_{\n^+}^{\theta} \Theta_{\theta , \pi} \right] (\exp H) = \sum_{
\mu \in W \cdot \lambda^1} C_{\mu} e^{\mu (H)} \ \ \ (H \in \mathcal{V})$$
o\`u les $C_{\mu}$ sont des constantes complexes (qui d\'ependent du choix de $H_0$).

D'apr\`es \cite[Lem. 3.6.23]{Bouaziz}, la fonction 
$H \mapsto \left[ D_{\uu^+}^{\theta} \Theta_{\theta , \pi} \right] (\exp H)$ se prolonge analytiquement \`a 
$\mathfrak{t}_c^1 + \mathfrak{a}_1^-$. On a donc~:
$$\left[ D_{\uu^+}^{\theta} \Theta_{\theta , \pi} \right] (\exp H) = \varepsilon (\exp H_0)^{-1} 
e^{\rho (H)}\sum_{\mu \in W \cdot \lambda^1} C_{\mu} e^{\mu (H)} \ \ \ (H \in \mathcal{V})$$
pour tout $H \in \mathfrak{t}_c^1 + \mathfrak{a}_1^-$. En r\'eutilisant \eqref{3627} on obtient~:
\begin{equation} \label{3628}
\left[ |\Delta_{\theta} | \Theta_{\theta , \pi} \right] (\exp H)  = \frac{\varepsilon (\exp H)}{\varepsilon (\exp H_0 )} \sum_{\mu 
\in W \cdot \lambda^1} C_{\mu} e^{\mu (H)}
\end{equation}
pour tout $H \in \mathfrak{t}_c^1 + \mathfrak{a}_1^-$ tel que $\exp H$ soit $\theta$-r\'egulier.

\subsection{Caract\`eres des repr\'esentations induites} \label{par:38}

Dans ce paragraphe nous supposons le tore $T$ maximalement d\'eploy\'e sur $\RR$ et notons $A$
sa partie d\'eploy\'ee. Nous supposons en outre que $T$ est contenu dans un sous-groupe parabolique
minimal $P=MAN$ de $G$ qui est invariant par $\theta$.\footnote{Noter qu'un tel $P$ n'existe pas en g\'en\'eral~: penser au cas o\`u $G$ est un groupe
complexe, vu comme groupe r\'eel, et o\`u $\theta$ correspond \`a la conjugaison complexe par rapport
\`a une forme r\'eelle non quasi-d\'eploy\'ee. (Ce probl\`eme apparent dispara\^{\i}t dans le cadre des espaces 
tordus.)}
Soit $(V_M , \tau)$ une repr\'esentation irr\'eductible $\theta$-stable de $M$ et $\mu$ un 
caract\`ere $\theta$-stable de $A$. On pose 
$$I = {\rm ind}_{MAN}^G (V_M \otimes \CC_{\mu} )$$
(induite unitaire).
La repr\'esentation $I$ est de longueur finie mais 
peut \^etre r\'eductible. N\'eanmoins le choix d'un op\'erateur $a_{\theta}$ ($a_{\theta}^d=1$) dans l'espace $V_M$ entrela\c{c}ant $\tau$ et $\tau \circ \theta$ est uniquement d\'efini \`a une racine de l'unit\'e pr\`es et induit un op\'erateur $A_{\theta} : I \rightarrow I$ par la formule
$$A_{\theta} (f) (g) = a_{\theta} f (\theta^{-1} (g)) \quad (\mbox{o\`u } f(gm) = \tau (m^{-1} f(g)).$$
L'op\'erateur $A_{\theta}$ d\'efinit une action de $\theta$ sur
$I$; dans la suite on choisira toujours cette action (uniquement d\'efinie \`a une racine de l'unit\'e pr\`es); notons en particulier que $A_{\theta}^d=1$. 
On peut alors consid\'erer le caract\`ere
tordu $\Theta_{I , \theta}$; c'est une fonction localement sommable analytique
sur les \'el\'ements $\theta$-r\'eguliers de $G$. Le lemme
suivant d\'ecoule de \cite[Lem. 7.1.3]{Bouaziz} qui g\'en\'eralise un th\'eor\`eme d'Hirai \cite[Thm. 2]{Hirai} dans le cas non tordu.

Le groupe $G$ op\`ere par $\theta$-conjugaison sur lui-m\^eme; on note 
\begin{equation*}
\begin{split}
N_{G, \theta} (T) & = \{ g \in G \; : \; gT^1(g^{\theta})^{-1} \subset T^1 \} \\
& = \{ g \in N_G (\mathfrak{t}^1) \; : \; g (g^{\theta} )^{-1} \in T^1 \} 
\end{split}
\end{equation*}
et $Z_{G ,\theta} (T) = T^{\theta}$.
Le groupe de Weyl tordu  
$$W_{\theta} (G , T) = N_{G , \theta} (T) / T^{\theta}$$
op\`ere sur $\mathfrak{t}$ et s'identifie naturellement \`a un sous-groupe d'indice fini de $W$.
On dispose de la m\^eme mani\`ere d'un groupe de Weyl tordu $W_{\theta} (MA , T)$. 

Puisque $P$ est minimal, si $T$ et $T'$ sont deux tores maximalement d\'eploy\'es dans $MA$ alors
$T^1$ et $(T')^1$ sont $\theta$-conjugu\'es dans $MA$.
L'expression du caract\`ere tordu de $I$ est donc particuli\`erement simple dans notre cas~:

\begin{lem} \label{Hirai}
Pour tout \'el\'ement $\theta$-r\'egulier $t\in T^1$, on a~:
$$\left[ |\Delta_{\theta} | \Theta_{I ,\theta}\right] (t) = \frac{1}{\# W_{\theta} (MA , T)} \sum_{v \in W_{\theta} (G , T)} \left[ |\Delta_{MA , \theta} | \Theta_{MA , \theta} (V_M) \right] (v t) .$$
\end{lem}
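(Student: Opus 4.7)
Mon plan est de d\'eduire cet \'enonc\'e directement de la formule g\'en\'erale de Bouaziz pour le caract\`ere tordu d'une induite \cite[Lem.~7.1.3]{Bouaziz}, sp\'ecialis\'ee au cadre parabolique minimal. Je commencerais par rappeler cette formule dans le cas g\'en\'eral~: pour $I = \mathrm{ind}_{P}^{G}(\tau \otimes \CC_{\mu})$ avec $P = MAN$ un parabolique $\theta$-stable quelconque, le caract\`ere tordu $\Theta_{I,\theta}$ en un \'el\'ement $\theta$-r\'egulier $t$ d'un tore maximal $T \subset MA$ s'exprime comme une somme, index\'ee par les classes de $\theta$-conjugaison dans $MA$ de Cartans de $MA$ rencontrant l'orbite $\theta$-conjugu\'ee de $t$, de caract\`eres tordus normalis\'es de $\tau$ somm\'es sur des classes $W_{\theta}(MA,T) \backslash W_{\theta}(G,T)$. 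La d\'erivation, qui g\'en\'eralise Hirai \cite{Hirai}, passe par la formule int\'egrale de type Frobenius pour $\mathrm{tr}(I^{+}(f) A_{\theta})$ obtenue en d\'eployant l'induction gr\^ace \`a la formule $A_{\theta}(f)(g) = a_{\theta} f(\theta^{-1}(g))$, puis par la formule d'int\'egration de Weyl tordue sur $G$ localis\'ee pr\`es de $\theta G$, et enfin par l'identification du jacobien du changement de variables $g \mapsto g^{-1} \delta \theta(g)$ avec le rapport $|\Delta_{\theta}|/|\Delta_{MA,\theta}|$.

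J'invoquerais ensuite la particularit\'e rappel\'ee juste avant le lemme~: $P$ \'etant minimal, tous les tores maximalement d\'eploy\'es $T' \subset MA$ ont leur partie $(T')^{1}$ $\theta$-conjugu\'ee \`a $T^{1}$ dans $MA$. La somme ext\'erieure dans la formule de Bouaziz se r\'eduit donc \`a un seul terme. La somme restante, index\'ee par $W_{\theta}(MA,T) \backslash W_{\theta}(G,T)$, se r\'e\'ecrit alors
$$\frac{1}{\# W_{\theta}(MA,T)} \sum_{v \in W_{\theta}(G,T)}$$
en utilisant la $W_{\theta}(MA,T)$-invariance du caract\`ere tordu $\Theta_{MA,\theta}(V_{M})$, qui est une fonction centrale sous $\theta$-conjugaison dans $MA$. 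On obtient ainsi la formule annonc\'ee.

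L'obstacle principal, d\'ej\`a trait\'e par Bouaziz en toute g\'en\'eralit\'e, est le prolongement analytique uniforme de la formule \`a travers les diff\'erentes composantes connexes de l'ensemble des \'el\'ements $\theta$-r\'eguliers de $T^{1}$~: c'est un ph\'enom\`ene parall\`ele au passage du Th\'eor\`eme~\ref{T26} \`a \eqref{3628}, o\`u un signe $\varepsilon$ appara\^{\i}t sur chaque composante. L'absence de signe correctif dans la formule finale refl\`ete le fait que ce signe se simplifie dans le quotient des caract\`eres normalis\'es $|\Delta_{\theta}| \Theta_{I,\theta}$ et $|\Delta_{MA,\theta}| \Theta_{MA,\theta}(V_{M})$.
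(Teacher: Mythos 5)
Your proof is correct and takes essentially the same approach as the paper, which simply cites Bouaziz \cite[Lem.~7.1.3]{Bouaziz} together with the remark that $P$ being minimal makes all maximally split tori of $MA$ $\theta$-conjugate, so the sum over Cartans of $MA$ collapses to a single term. The extra detail you supply (tracing Bouaziz's formula back to the Frobenius integral and the twisted Weyl integration formula, and the unfolding of the $W_{\theta}(MA,T)\backslash W_{\theta}(G,T)$ coset sum via $W_{\theta}(MA,T)$-invariance of the normalized twisted character) is consistent with, and implicit in, the paper's one-line justification.
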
  

Ici $\Theta_{MA , \theta} (V_M)$ d\'esigne la caract\`ere tordu de la repr\'esentation $V_M$ du groupe
r\'eductif $MA$. Noter que $V_M$ est une repr\'esentation $\theta$-stable et 
de dimension finie. Supposons que $Z(\mathfrak{m}_{\CC} \oplus \mathfrak{a}_{\CC} )$ agisse sur 
$V_M$ selon $\chi_{\lambda}$ avec $\lambda \in \mathfrak{t}_{\CC}^*$. 
D'apr\`es le th\'eor\`eme \ref{T26}, au voisinage d'un point $x \in T^{\theta}$ 
$\theta$-r\'egulier dans $MA$, la fonction $ |\Delta_{MA , \theta} | \Theta_{MA , \theta} (V_M)$ s'exprime
sous la forme~:
\begin{equation} \label{59}
\sum_{w \in W(\mathfrak{m}_{\CC} \oplus \mathfrak{a}_{\CC} , \mathfrak{t}_{\CC})^{\theta}} d_{\lambda} (x , w) e^{w^{-1} \lambda^1},
\end{equation}
o\`u les $d_{\lambda} (x , w)$ sont des constantes dans $\CC$. 
Dans un voisinage de $x$ dans $T^{\theta}$ on a donc~:
\begin{multline} \label{514}
\left[ |\Delta_{\theta} | \Theta_{I ,\theta}\right] \\ = \frac{1}{\# W_{\theta} (MA , T)} \sum_{v \in W_{\theta} (G , T)} \sum_{w \in W(\mathfrak{m}_{\CC} \oplus \mathfrak{a}_{\CC} , \mathfrak{t}_{\CC})^{\theta}} d_{\lambda} (vx , w) e^{v^{-1} w^{-1} \lambda^1}.
\end{multline}

\section{Exposants tordus, $\n$-homologie et conjecture d'Osborne}
\label{S4}

Dans ce chapitre nous supposons, comme dans le \S \ref{par:38}, que 
le tore $T$ maximalement d\'eploy\'e sur $\RR$ est contenu dans un parabolique minimal $\theta$-stable $P$. Nous notons $A$ 
sa partie d\'eploy\'ee et $\mathfrak{n}$ la complexifi\'ee de l'alg\`ebre de Lie du radical unipotent 
de $P=MAN$. De sorte que $T=T_c A$, o\`u $A = (\RR_+^*)^r$ et $T_c$ est compact; l'entier $r$
est le rang r\'eel de $G$. L'automorphisme $\theta$
pr\'eserve $A$. Un caract\`ere $\chi$ de $A$ est dit {\it $\theta$-stable} si $\chi \circ \theta = \chi$.

\subsection{Exposants $\theta$-stables} \label{section:exp}
Notons $\a$ l'alg\`ebre de Lie de $A$. Un {\it exposant} est un caract\`ere de $A$.
Par abus de notation on identifie un caract\`ere de $A$ \`a un \'el\'ement de ${\rm Hom} (\a , \CC)=\CC^r$. L'alg\`ebre $\mathfrak{n}$ d\'etermine un syst\`eme de racines positives qui sont les parties r\'eelles 
des racines non-imaginaires dans $R=R(\B , \T)$. Pour deux 
exposants $e$, $e'$, on \'ecrit $e\leq e'$ si  
$$e' = e + \sum_{\alpha} n_{\alpha} \alpha \ \ \ (n_{\alpha} \geq 0)$$
o\`u $\alpha$ d\'ecrit les racines simples dans $\mathfrak{n}$ et $e$, $e'$ sont vus comme des formes lin\'eaires complexes sur $\a = \RR^r$. Noter qu'un exposant 
$$e = \sum_{\alpha} n_{\alpha} \alpha $$
est $\theta$-stable si et seulement si
\begin{eqnarray*}
e & = & \sum_{\alpha} \frac{1}{d} n_{\alpha} (\alpha +\theta(\alpha) + \ldots + \theta^{d-1} (\alpha)) \\
& = & \sum_{\alpha} \frac{1}{d_{\alpha}} n_{\alpha} \mathrm{N} \alpha \\
& = & \sum_{\mathrm{N} \alpha} n_{\alpha} \mathrm{N} \alpha .
\end{eqnarray*}
Et l'ordre naturel sur les racines dans $\mathrm{N}R$ (\S \ref{sec:Norme}) d\'efinit un ordre 
sur les exposants $\theta$-stables; dans la deuxi\`eme partie de cet article nous notons $\leq_{\theta}$ cet ordre. 

\medskip
\noindent
{\it Exemple.} Soit $\G$ le groupe des automorphismes lin\'eaires de $\CC^N$ et $\theta$ l'automorphisme involutif 
op\'erant par $g \mapsto g^{\theta}=J {}^t \- g^{-1} J^{-1}$. Le groupe $E$ \'egal \`a $\SO (2\ell+1, \CC)$, 
resp. $\Sp (2\ell , \CC)$, si $N=2\ell$ est pair, resp. si $N=2\ell+1$ est impair. 

{\it Via} l'application norme, un exposant $\theta$-stable $e \in \CC^N$ d\'efinit un exposant de $E$
c'est-\`a-dire un vecteur dans $\CC^{\ell}$. L'ordre naturel sur les racines de $E$ definit donc un ordre $\leq_{E}$ sur les exposants $\theta$-stables. Si $e \leq_{E} e'$ on a toujours $e \leq_{\theta} e'$ mais 
la r\'eciproque n'est vraie que si $N$ est pair. 
Pour deux exposants $\theta$-stables $e$, $e'$ l'ordre $\leq_{E}$ s'exprime par 
$e'-e=(x_i)_{i=1, \ldots , \ell}$ avec 
\begin{eqnarray} \label{ordre3}
\left\{
\begin{array}{l}
x_1 + \ldots + x_i \in \N \ \ \ (1 \leq i \leq \ell -1)  \\
\mbox{et} \\
x_1 + \ldots + x_{\ell} \in \N, \ \ \ {\rm resp.} \  2 \N 
\end{array}
\right.
\end{eqnarray}
si $N=2\ell$ est pair, resp. si $N=2\ell+1$ est impair. L'ordre $\leq_{\theta}$ s'exprime quant \`a lui par 
$e'-e=(x_i)_{i=1, \ldots , \ell}$ avec 
\begin{eqnarray} \label{ordre}
\left\{
\begin{array}{l}
x_1 + \ldots + x_i \in \N \ \ \ (1 \leq i \leq \ell -1)  \\
\mbox{et} \\
x_1 + \ldots + x_{\ell} \in \N
\end{array}
\right.
\end{eqnarray}
ind\'ependamment de la parit\'e de $N$. 

\subsection{Exposants d'homologie}

Soit $R_{\rm res}^{\rm n.i.} \subset R_{\rm res}$ le sous-ensemble constitu\'e des racines (restreintes) non-imaginaires. 

\begin{lem} \label{L:res}
Soit $\alpha \in R$ une racine non imaginaire alors $\alpha_{\rm res} \in R_{\rm res}^{\rm n.i.}$.
\end{lem}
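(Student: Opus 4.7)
The strategy is to exploit the $\theta$-invariance of the positive system defined by $\mathfrak{n}$. I first observe that $\mathfrak{a}^1 = \mathfrak{t}^1 \cap \mathfrak{p} = \mathfrak{t}^\theta \cap \mathfrak{p} = \mathfrak{a}^\theta$, so the condition $\alpha_{\rm res} \in R_{\rm res}^{\rm n.i.}$ is equivalent to $\alpha|_{\mathfrak{a}^\theta} \neq 0$. Hence the lemma reduces to proving the implication $\alpha|_\mathfrak{a} \neq 0 \Rightarrow \alpha|_{\mathfrak{a}^\theta} \neq 0$.

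Assume $\alpha|_\mathfrak{a} \neq 0$. Since $P = MAN$ is $\theta$-stable, $\theta$ preserves $\mathfrak{n}$, and up to replacing $\alpha$ by $-\alpha$ we may assume that $\alpha$ is a root of $\mathfrak{n}$; then $\alpha|_\mathfrak{a}$ is a positive restricted root. The entire $\theta$-orbit of $\alpha$ then consists of roots of $\mathfrak{n}$, so each $(\theta^i \alpha)|_\mathfrak{a}$ is again a positive restricted root. Since the positive cone in $\mathfrak{a}^*$ is strict, the sum
\[
\mathrm{N}\alpha|_\mathfrak{a} = \sum_{i=0}^{d_\alpha-1}(\theta^i\alpha)|_\mathfrak{a}
\]
is nonzero.

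On the other hand $\mathrm{N}\alpha|_\mathfrak{a}$ is $\theta$-invariant on $\mathfrak{a}$, so it vanishes on $(1-\theta)\mathfrak{a}$ and is therefore determined by its restriction to $\mathfrak{a}^\theta$. In particular $\mathrm{N}\alpha|_{\mathfrak{a}^\theta} \neq 0$, and since $\mathrm{N}\alpha|_{\mathfrak{a}^\theta} = d_\alpha \cdot \alpha|_{\mathfrak{a}^\theta}$ one concludes $\alpha|_{\mathfrak{a}^\theta} \neq 0$, as required. The only real content is the positivity step: without the $\theta$-stability of $\mathfrak{n}$, the restrictions $(\theta^i\alpha)|_\mathfrak{a}$ could in principle lie on both sides of the hyperplane cutting $\mathfrak{n}$ out and their sum could vanish; this is precisely the potential obstruction that the $\theta$-invariance of $P$ rules out.
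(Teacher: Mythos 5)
Your proof is correct, and it is essentially the paper's argument in dual form: the paper averages a vector $H$ in the open positive Weyl chamber of $\mathfrak{a}$ to produce a $\theta$-fixed $H^1 = \sum_i \theta^i(H) \in \mathfrak{a}^1$ with $\alpha(H^1) = \sum_i (\theta^i\alpha)(H) > 0$, whereas you average the linear form $\alpha$ to form $\mathrm{N}\alpha$, observe that it is a nonzero sum of positive restricted roots by strictness of the positive cone, and then restrict to $\mathfrak{a}^\theta$ where it becomes $d_\alpha\,\alpha$. Both proofs hinge on exactly the same two facts — $\theta$ preserves $\mathfrak{n}$ (so the whole $\theta$-orbit of $\alpha$ stays positive) and the positive cone is proper — so the underlying idea is identical.
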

\begin{proof} Puisque $\alpha$ est non imaginaire et que $P$ est minimal on a $\mathfrak{g}_{\CC}^{\alpha} 
\subset \mathfrak{n}$. Il s'agit donc de montrer qu'une racine (r\'eelle) $\alpha$ de $\mathfrak{a}$ dans $\mathfrak{n}$ se restreint non trivialement \`a $\mathfrak{a}^1$.
Les racines de $\mathfrak{a}$ dans $\mathfrak{n}$ d\'eterminent une chambre de Weyl aigu\"e (ouverte) positive $C$. Soit $H \in C$; alors pour toute racine  $\alpha$ 
de $\mathfrak{a}$ dans $\mathfrak{n}$ on a $\alpha (H) >0$. Mais puisque par ailleurs $\mathfrak{n}$ est $\theta$-stable $\theta$ permute les
racines (positives) de $\mathfrak{a}$ dans $\mathfrak{n}$. On a donc~:
$$\alpha \left(\sum_{i=0}^d \theta^i (H)\right) = \sum_{i=0}^d \theta^i (\alpha) (H) >0$$
et $\sum_{i=0}^d \theta^i (H) \in \mathfrak{a}^1$.
\end{proof}

Il d\'ecoule en particulier du lemme \ref{L:res} que  
$$\n = \sum_{\alpha_{\rm res} \in R_{\rm res}^{\rm n.i.}} \g_{\CC}^{\alpha_{\rm res}} \subset \uu^+ .$$ 
Notons 
$$\rho_P = \frac12 \sum_{\alpha_{\rm res} \in R_{\rm res}^{\rm n.i.}}  ( \dim \g^{\alpha_{\rm res}}_{\CC} )\alpha_{\rm res}.$$
 Soit 
$V$ le module d'Harish-Chandra d'une repr\'esentation (admissible) irr\'eductible $\theta$-stable $\pi$.
Un lemme de  Casselman et Osborne \cite{CasselmanOsborne} affirme que $V$ est de type fini comme 
$U(\n)$-module. 
Pour tout $q \geq 0$, le groupe d'homologie $H_q (\n , V)$ est donc de dimension finie. Si $e$ est 
un exposant, on peut consid\'erer le $(e+\rho_P)$-espace propre g\'en\'eralis\'e; notons-le
$H_q (\n , V)_{e}$. On dit que $e$ est un {\it exposant d'homologie} si 
$$H_q (\n , V)_e \neq 0$$
pour un certain $q$. La raison d'\^etre du d\'ecalage par $\rho_P$ deviendra claire au chapitre suivant, voir
\cite[p. 51]{HechtSchmid}.

Soit $q$ un entier positif. Alors $T$ et $\theta$ ({\it via} $A_{\theta}$) op\`erent sur $H_q (\n , V)$. 
Pour $t \in T$ (en particulier $t \in T^1$) on peut consid\'erer 
\begin{eqnarray} \label{HomTordue}
\Theta_q^{\theta} (t,V) = {\rm trace} \left( t \theta \ | \  H_q (\n , V) \right) .
\end{eqnarray}
Notons une diff\'erence importante avec le cas non tordu. L'op\'erateur $A_{\theta}$ op\`ere sur l'homologie
avec des valeurs propres (qui sont des racines de l'unit\'e). L'expression \eqref{HomTordue} est
donc (m\^eme pour $q$ fix\'e) une somme \`a coefficients dans les racines de l'unit\'e de caract\`eres 
$\theta$-stables de $t \in T$. Cette somme peut s'annuler sur $A$ sans \^etre nulle sur $T$.

\'Etant donn\'e un exposant $e$ on notera $f \in \a^{1 *}\otimes \CC$ le caract\`ere obtenu par restriction. De la m\^eme mani\`ere que les racines dans $R$ d\'efinissent un ordre sur les  exposants, les racines restreintes 
dans $R_{\rm res}$ d\'efinissent un ordre naturel sur les caract\`eres $f \in \a^{1 *}\otimes \CC$; nous notons $\leq_{\rm res}$ cet ordre.

Un caract\`ere $f \in \a^{1 *}\otimes \CC$ est un 
{\it exposant d'homologie restreint} s'il appara\^{\i}t dans $H_q (\n , V)$ pour quelque $q$. C'est un 
{\it exposant restreint minimal} si c'est un exposant d'homologie restreint et s'il est minimal pour $\leq_{\rm res}$. 
Rappelons un r\'esultat fondamental de Hecht-Schmid dans le cas non tordu~: \cite[Prop. 2.32]{HechtSchmid}. 

\begin{prop} \label{ExpMax}
Si $e$ est un exposant d'homologie minimal (pour $\leq $) alors $e$ n'appara\^{\i}t que dans $H_0 (\n , V)$.
\end{prop}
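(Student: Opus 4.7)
The plan is to combine the finite generation of $V$ over $U(\n)$ (the Casselman--Osborne lemma recalled just above) with a direct weight analysis on the Koszul (Chevalley--Eilenberg) complex
$$\cdots \to \wedge^{q+1}\n \otimes V \xrightarrow{d} \wedge^q \n \otimes V \to \cdots \to V \to 0$$
which computes $H_\bullet(\n, V)$. Its differential commutes with $\a$, so each $H_q(\n, V)$ inherits a weight decomposition from that of $\wedge^q \n \otimes V$; moreover every $\a$-weight of $\wedge^q \n$ for $q \geq 1$ is a sum of $q$ restricted roots in $R_{\rm res}^{\rm n.i.}$, hence strictly positive in the $\leq$-order.

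I would argue by contradiction: assume $e$ is a $\leq$-minimal homology exponent and that $H_q(\n, V)_e \neq 0$ for some $q \geq 1$; the goal is to exhibit a strictly smaller exponent in $H_0(\n, V)$. Setting $w := e + \rho_P$, I would choose a cycle $z \in \wedge^q \n \otimes V$ of $\a$-weight $w$ representing a nonzero class and decompose $z = \sum_I X_I \otimes v_I$ with $X_I$ a weight vector of $\wedge^q \n$ of weight $\alpha_I > 0$ and $v_I \in V$ a weight vector of $\a$-weight $w - \alpha_I$. Since $z \neq 0$, some $v_I$ is nonzero, so $V$ admits $w - \alpha_I$ as an $\a$-weight.

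Next I would invoke the structural statement which I expect to be the technical heart of the argument: \emph{every $\a$-weight of $V$ dominates (for $\leq$) some $\a$-weight of $H_0(\n, V)$}. Indeed, picking $\a$-weight generators $v_1, \ldots, v_k$ of $V$ as a $U(\n)$-module with weights $\mu_j$, every $\a$-weight of $V$ lies in the finite union of translates $\mu_j + \Z_{\geq 0}\cdot R_{\rm res}^{\rm n.i.}$, so the restriction of $\leq$ to the set of $\a$-weights of $V$ is well-founded. Moreover every $\leq$-minimal $\a$-weight $\mu$ of $V$ is automatically a weight of $H_0(\n, V)$: otherwise $V_\mu \subseteq \n V$ would force, upon expanding into $\n$-weight components, the existence of a weight of $V$ strictly smaller than $\mu$, contradicting minimality.

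Applying this to the weight $w - \alpha_I$ of $V$ yields an $\a$-weight $\nu$ of $H_0(\n, V)$ with $\nu \leq w - \alpha_I$; then $\nu - \rho_P$ is an exponent of $H_0(\n, V)$, hence a homology exponent of $V$, and
$$\nu - \rho_P \;\leq\; w - \alpha_I - \rho_P \;=\; e - \alpha_I \;<\; e,$$
contradicting the minimality of $e$. The only substantive step is the structural statement above, for which Casselman--Osborne supplies the required finiteness; the remainder is a routine weight-bookkeeping exercise in the Koszul complex.
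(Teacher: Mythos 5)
The paper does not give a proof of this proposition: it quotes it from Hecht and Schmid, citing \cite[Prop. 2.32]{HechtSchmid}. So there is no in-paper argument to compare with, and I will assess your proof directly.

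Your plan --- reduce, via the Koszul complex, to the structural claim that every $\a$-weight of $V$ dominates some $\a$-weight of $H_0(\n,V)$, and derive the latter from Casselman--Osborne finite generation --- is exactly the Hecht--Schmid route; the structural claim is essentially their Lemma~2.33. But there is a genuine gap, and it sits precisely in the part you describe as routine weight-bookkeeping. Throughout you manipulate $V$ as if it carried a (generalized) $\a$-weight decomposition: you choose a cycle $z\in\wedge^q\n\otimes V$ of $\a$-weight $w$, write $v_I$ as a weight vector of $\a$-weight $w-\alpha_I$, and pick $\a$-weight generators $v_1,\ldots,v_k$ of $V$. For a Harish--Chandra module the action of $\a$ is not locally finite, and $V$ does \emph{not} decompose into generalized $\a$-weight spaces, so none of these operations is legitimate as written. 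What is true is that $V/\n^kV$ is finite-dimensional for every $k$ (Casselman--Osborne gives $V$ finitely generated over the Noetherian algebra $U(\n)$), hence carries a generalized $\a$-weight decomposition, so that the $\n$-adic completion $\widehat{V}=\varprojlim V/\n^kV$ has a formal (product) weight decomposition; one then also needs the comparison isomorphism $H_q(\n,V)\cong H_q(\n,\widehat{V})$ to run your Koszul argument in $\widehat{V}$ rather than $V$. It is in $\widehat{V}$ (or equivalently in the truncations $V/\n^kV$) that your two assertions --- existence of weight generators, and well-foundedness of $\leq$ on the relevant set of weights --- become meaningful and provable. So the shape of the argument is correct and matches the source, but you have skipped the machinery that makes the phrase \emph{$\a$-weight of $V$} legitimate, and that machinery is the real content of the part of Hecht--Schmid that the paper's citation is invoking.
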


Nous utiliserons cette proposition sous la forme suivante~:

\begin{prop} \label{ExpMax2}
Soit $f$ un exposant restreint minimal. Alors $f$ n'appara\^{\i}t que dans $H_0 (\n , V)$.
\end{prop}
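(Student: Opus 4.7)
L'id\'ee centrale est de ramener la proposition \ref{ExpMax2} \`a la proposition \ref{ExpMax} en \'etablissant~: si $f$ est un exposant restreint minimal et si $e$ est un exposant d'homologie (non restreint) avec $e|_{\mathfrak{a}^1} = f$, alors $e$ est automatiquement minimal pour l'ordre non restreint $\leq$ parmi tous les exposants d'homologie.

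Je commencerais par observer que la preuve du lemme \ref{L:res} fournit en fait un \'el\'ement $H^{\sharp} \in \mathfrak{a}^1$ -- par exemple $H^{\sharp} = \sum_{i=0}^{d-1} \theta^i (H_0)$ pour $H_0$ dans la chambre positive de $\mathfrak{a}$ d\'etermin\'ee par $\mathfrak{n}$ -- tel que $\alpha(H^{\sharp}) > 0$ pour toute racine positive $\alpha$ de $\mathfrak{a}$ dans $\mathfrak{n}$. En particulier $\alpha|_{\mathfrak{a}^1}(H^{\sharp}) > 0$ pour toute racine simple $\alpha$ de $\mathfrak{n}$.

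Soit alors $e$ un exposant d'homologie avec $e|_{\mathfrak{a}^1} = f$ et supposons par l'absurde qu'il existe un exposant d'homologie $e'$ avec $e' < e$ strictement. On \'ecrit $e - e' = \sum_{\alpha} n_{\alpha} \alpha$, somme sur les racines simples $\alpha$ de $\mathfrak{n}$, avec $n_{\alpha} \in \N$ non tous nuls. En restreignant \`a $\mathfrak{a}^1$ on obtient $f - e'|_{\mathfrak{a}^1} = \sum_{\alpha} n_{\alpha} \alpha|_{\mathfrak{a}^1}$, somme \`a coefficients positifs d'\'el\'ements de $R_{\rm res}^{\rm n.i.}$ d'apr\`es le lemme \ref{L:res}. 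Ainsi $e'|_{\mathfrak{a}^1} \leq_{\rm res} f$, et la minimalit\'e de $f$ force $e'|_{\mathfrak{a}^1} = f$, autrement dit $\sum_{\alpha} n_{\alpha} \alpha|_{\mathfrak{a}^1} = 0$. L'\'evaluation sur $H^{\sharp}$ donne alors $\sum_{\alpha} n_{\alpha} \alpha|_{\mathfrak{a}^1}(H^{\sharp}) = 0$, somme de termes positifs ou nuls, ce qui impose $n_{\alpha} = 0$ pour tout $\alpha$, contredisant $e' < e$.

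L'exposant $e$ est donc minimal pour $\leq$, et la proposition \ref{ExpMax} entra\^{\i}ne qu'il n'appara\^{\i}t que dans $H_0(\n , V)$. Comme cela vaut pour chaque exposant d'homologie $e$ se restreignant \`a $f$, l'exposant restreint $f$ n'appara\^{\i}t lui-m\^eme que dans $H_0(\n , V)$. Le seul point non formel de l'argument est la positivit\'e stricte simultan\'ee des $\alpha|_{\mathfrak{a}^1}$ sur un m\^eme vecteur $H^{\sharp}$; elle repose enti\`erement sur la $\theta$-stabilit\'e du parabolique minimal $P$ via le lemme \ref{L:res}.
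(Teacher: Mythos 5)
Your proof is correct and in substance matches the paper's, which likewise combines Lemme \ref{L:res} with Proposition \ref{ExpMax}: the paper takes an $e$ appearing in $H_q$ with $q>0$, writes $e=e_0+\sum n_\alpha\alpha$ with $e_0$ in $H_0$ and $(n_\alpha)\neq 0$, restricts to $\mathfrak{a}^1$, and concludes, whereas you first show directly that any homology exponent restricting to $f$ is minimal for $\leq$ and then apply Proposition \ref{ExpMax}. Your explicit introduction of the element $H^{\sharp}\in\mathfrak{a}^1$ and its evaluation on $\sum n_\alpha\alpha_{\rm res}$ makes fully precise why a nonzero nonnegative combination of non-imaginary simple roots cannot vanish on $\mathfrak{a}^1$ --- a point the paper handles implicitly by relying on the \emph{proof} (rather than just the statement) of Lemme \ref{L:res}.
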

\proof Supposons en effet que $f=e_{|\mathfrak{a}^1}$ o\`u $e$ appara\^{\i}t dans $H_q$, $q>0$. Alors $e=e_0 + \sum n_{\alpha} \alpha$ (notation de \ref{section:exp}) o\`u $(n_{\alpha})_{\alpha} \neq 0$ d'apr\`es la proposition \ref{ExpMax} et $e_0$ appara\^{\i}t dans $H_0$. On en d\'eduit que 
$$f = (e_0)_{|\mathfrak{a}^1} + \sum_{\alpha} n_{\alpha} \alpha_{\rm res}.$$
Ce qui, d'apr\`es le lemme \ref{L:res}, contredit la minimalit\'e de $f$.
\qed

Nous avons en vue le th\'eor\`eme suivant -- version tordue de la \og conjecture d'Osborne\fg \ d\'emontr\'ee
par Hecht et Schmid \cite{HechtSchmid}. Suivant la d\'emonstration de \cite{HechtSchmid} nous
commencerons par le r\'eduire \`a un cas apparemment tr\`es particulier {\it via} un proc\'ed\'e de \og continuation coh\'erente \fg.

On d\'efinit~:
\begin{equation} \label{Det}
\begin{split}
D_{\n}^{\theta} (t) & =  \det \left( (1-\theta t)_{|\n} \right) \\
& =  \sum_q (-1)^q {\rm trace} (t\theta \ | \ \Lambda^q \n ) .\\
\end{split}
\end{equation}
On dit que $t \in T^1$ est {\it contractant} si les valeurs propres (complexes) de $t\theta$ dans $\n$ 
sont de valeur absolue $<1$. Noter qu'alors $D_{\n}^{\theta} (t) \neq 0$.\footnote{Nous consid\'erons un 
ensemble un peu plus petit que celui de Hecht-Schmid dans le cas non tordu \cite[\S 3]{HechtSchmid}.} 

\begin{thm} \label{Osborne}
Pour tout \'el\'ement $\theta$-r\'egulier contractant $t\in T^1$, on a~:
$$\Theta_{\pi , \theta} (t) = \frac{\sum_q (-1)^q \Theta_q^{\theta} (t , V)}{D^{\theta}_{\n} (t)}.$$
\end{thm}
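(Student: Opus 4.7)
La stratégie naturelle est d'adapter au cas tordu la démonstration de Hecht-Schmid \cite{HechtSchmid} de la conjecture d'Osborne, en exploitant les outils développés dans les sections précédentes. Je comparerais les deux membres comme fonctions analytiques sur l'ensemble ouvert des éléments $\theta$-réguliers contractants de $T^1$, en m'appuyant sur le fait qu'ils sont tous deux $Z(\mathfrak{g}_\CC)$-finis (à gauche par l'action du caractère infinitésimal $\chi_\lambda$, à droite par le lemme de Casselman-Osborne rappelé avant la proposition \ref{ExpMax2}) et admettent donc des développements en polynômes exponentiels dont les exposants se situent dans la $W$-orbite de $\lambda^1$. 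Pour le membre de gauche, le développement est donné par \eqref{3628}; pour le membre de droite, le numérateur se développe via l'analyse des $T$-poids sur $H_\star(\mathfrak{n}, V)$ et le dénominateur $D^\theta_\mathfrak{n}$ ne s'annule pas sur l'ensemble contractant, son inverse se développant en série de caractères \og n\'egatifs \fg\ de $T^1$.

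L'étape centrale de la démonstration consiste, suivant \cite{HechtSchmid}, à se ramener par continuation cohérente au cas où $\pi$ est une représentation induite à partir d'un parabolique minimal $\theta$-stable. Si $\tau = \tau_\Lambda$ est une représentation irréductible $\theta$-stable de dimension finie avec $\Lambda \in X_{\rm p}^{\rm res}(\T^1) \cap X^*_{{\rm p} +}$, les formules \eqref{1} et \eqref{2} permettent de relier $\Theta_{\pi, \theta} \cdot \Theta_{\tau, \theta}$ aux caractères tordus des sous-quotients $\theta$-stables de $\pi \otimes \tau$, tandis que les $T$-caractères de $H_\star(\mathfrak{n}, V \otimes V_\tau)$ se décomposent de manière compatible (en utilisant la finitude de $V_\tau$ et une filtration par sous-$\mathfrak{n}$-modules). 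En choisissant $\Lambda$ à stabilisateur trivial -- comme à la fin de la preuve du théorème \ref{T26} -- et en utilisant la proposition \ref{ExpMax2} pour identifier les contributions des exposants restreints minimaux à $H_0(\mathfrak{n}, V)$, on isole le terme correspondant au sous-quotient voulu et on réduit l'identité au cas des induites depuis $P = MAN$.

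Pour les induites, le lemme \ref{Hirai} donne explicitement le membre de gauche, et l'on dispose d'un calcul direct de $H_\star(\mathfrak{n}, I)$ par une décomposition analogue indexée par les classes de $W_\theta(MA, T) \backslash W_\theta(G, T)$; le dénominateur de Weyl tordu issu de ce calcul se combine à $D^\theta_\mathfrak{n}$ pour donner l'identité voulue, dans l'esprit de \eqref{DenEx}. L'obstacle principal que j'anticipe est le suivi soigneux de l'action de $A_\theta$ sur l'homologie au cours de la continuation cohérente~: comme noté dans la remarque suivant \eqref{2}, les multiplicités apparaissent dans $\ZZ[\zeta_d]$ plutôt que dans $\ZZ$, et il faudra s'assurer que les annulations éventuelles sur $A$ mentionnées au \S~4.2 ne corrompent pas la comparaison sur $T^1$ tout entier. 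C'est précisément pour gérer ce point que la proposition \ref{ExpMax2} joue un rôle essentiel, en permettant -- par unicité du développement polynôme-exponentiel sur un ouvert connexe de $\mathfrak{t}^1$ -- de conclure à l'égalité des coefficients.
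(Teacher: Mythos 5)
Votre proposition reproduit correctement le cadre général de la démonstration -- continuation cohérente via tensorisation par des représentations de dimension finie $\theta$-stables (cf.\ lemme \ref{L344} et \eqref{360}), comparaison des développements polynôme-exponentiel de $|\Delta_\theta|\Theta_{\theta,\pi}$ et de $|\Delta_\theta|\Theta_{\n,\theta}$, et utilisation de la proposition \ref{ExpMax2} pour localiser les exposants restreints minimaux dans $H_0(\n,V)$. Le point concernant la non-annulation de $D^\theta_\n$ sur l'ensemble contractant et le développement en série de son inverse est également juste.

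Cependant, il manque l'ingrédient analytique central de la démonstration, qui ne peut pas être remplacé par un argument formel. Vous écrivez que la continuation cohérente « ramène au cas des induites depuis $P=MAN$ » et que l'on conclut « par unicité du développement polynôme-exponentiel ». Ce n'est pas ce que fait l'article, et ce raccourci est un véritable trou. La continuation cohérente ramène seulement au cas où $\lambda$ est très négatif (\eqref{516}). Dans ce régime, le c\oe ur de la preuve est la proposition \ref{615} : si $H_0(\n,V)_e=0$, alors $\Theta_\theta(V)_f$ s'annule sur les éléments $\theta$-réguliers contractants. C'est cette proposition -- et non une comparaison formelle de coefficients -- qui permet de remplacer $V$ par l'induite $I=\mathrm{ind}_P^G(W\otimes\CC_e)$ dans laquelle $V$ se plonge, le seul sous-quotient contribuant à la composante $f$ étant $V$ lui-même. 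La démonstration de cette proposition est intrinsèquement analytique : elle s'appuie sur des majorations de coefficients matriciels $K$-finis (Knapp Thm.\ 8.47, resp.\ \cite[Lem.\ 6.46]{HechtSchmid}), sur la décomposition $\Theta_{\pi,\theta} = \sum_\tau \mathrm{trace}(E_\tau\pi(g)A_\theta E_\tau)$, sur une écriture en $\sum_j X_j h_j$ au sens des distributions avec contrôle de croissance, et enfin sur un argument de contradiction par intégration contre des fonctions test $g_k$ translatées vers l'infini dans $A^{1-}$. Votre argument ne peut pas fournir ce contrôle : l'unicité du développement polynôme-exponentiel sur un ouvert connexe de $\mathfrak{t}^1$ identifie les exposants apparaissant dans $|\Delta_\theta|\Theta_{\theta,\pi}$, mais ne dit rien a priori sur la valeur des constantes $c_\lambda(x,w)$ attachées à l'exposant minimal $f$, et en particulier ne permet pas de conclure à leur nullité quand $H_0(\n,V)_e=0$.

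Par ailleurs, pour l'étape induite, l'article ne calcule pas $H_\star(\n,I)$ comme vous le suggérez, mais utilise directement la formule de type Hirai (lemme \ref{Hirai}, équation \eqref{514}) pour extraire la composante $f$ de $\Theta_\theta(I)$ (lemme \ref{517}), ce qui est plus efficace et évite de devoir établir la version tordue d'un calcul d'homologie des induites.
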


\subsection{} On a le fait \'evident suivant~:

{\it Si $t\in T^1$ est contractant et $a \in A^1 =A_{\theta}$ v\'erifie $a^{\alpha_{\rm res}} <1$ ($\alpha_{\rm res} \in R_{\rm res}^{\rm n.i.}$) alors $ta$ est contractant.}

\section{Continuation coh\'erente}

Soit $(V , \pi)$ une repr\'esentation admissible de $G$ munie d'une extension \`a $G^+$. On consid\`ere 
le caract\`ere tordu 
$$\Theta_{\theta} (V) = \Theta_{\pi , \theta}.$$
Il d\'epend \'evidemment du choix de l'op\'erateur d'entrelacement $A_{\theta}$.

De m\^eme chaque $H_q (\n , V)$ est un $MA$-module $\theta$-invariant. Et le caract\`ere
tordu $\Theta_{MA , \theta} (H_q (\n , V))$ est \'egal \`a $\Theta_q^{\theta} (t,V)$. Dans la suite nous
notons~:
$$\Theta_{\n , \theta} (V) = \frac{\sum_q (-1)^q \Theta_{MA , \theta} (H_q (\n , V))}{D^{\theta}_{\n} (t)}.$$

Noter que $V$ est toujours de longueur finie; soit 
$$[V] = m_1 [V_1] + \ldots + m_n [V_n]$$
l'expression de $V_{|G}$ dans le groupe de Grothendieck des $(\g , K)$-modules, avec $V_i$ irr\'eductible, pour $i=1 , \ldots , n$, alors~:
\begin{equation} \label{decIrred1}
\Theta_{\theta} (V) = m_1 ' \Theta_{\theta} (V_1 ) + \ldots + m_n ' \Theta_{\theta} (V_n ) \quad (m_i ' \in \ZZ [\zeta_d] )
\end{equation}
o\`u, par convention, on pose $\Theta_{\theta} (V_i ) = 0$ si $V_i$ n'est pas $\theta$-stable et o\`u on 
choisit, si $V_i$ est $\theta$-stable, une extension \`a $G^+$. 

De m\^eme chaque $H_q (\n , V_i)$ est un $MA$-module $\theta$-invariant. Et le caract\`ere
tordu $\Theta_{\n , \theta} (V)$ 
se d\'ecompose en~:
\begin{equation} \label{decIrred2}
\Theta_{\n , \theta} (V) = m_1 ' \Theta_{\n , \theta} (V_1) + \ldots + m_n ' \Theta_{\n , \theta} (V_n),
\end{equation}
avec les m\^emes coefficients $m_i '$.
Il suffit en effet de traiter le cas d'une suite exacte courte 
$$0 \rightarrow V_1 \rightarrow V \rightarrow V_2 \rightarrow 0$$
o\`u $V_1$ est irr\'eductible en tant que $G^+$-module. L'action de $\theta$ -- par $A_{\theta}$ -- 
sur $V$ induit une action sur $V_1$ et $V_2$ et ces actions commutent \`a la suite exacte longue d'homologie.
Le calcul de la caract\'eristique d'Euler de cette suite exacte longue implique alors l'additivit\'e 
des caract\`eres tordus. Par ailleurs si la restriction du $G^+$-module $V_1$ \`a $G$ n'est pas irreductible $\Theta_{\n , \theta} (V_1) = 0$.    

Il d\'ecoule de \eqref{decIrred1} et \eqref{decIrred2} que pour 
d\'emontrer que pour tout \'el\'ement $\theta$-r\'egulier 
$t\in T^1$, on a~:
\begin{equation} \label{O}
\Theta_{\theta} (V) = \Theta_{\n , \theta} (V),
\end{equation} 
il suffit de le v\'erifier pour les $V$ irr\'eductibles. 

\subsection{} Notons $\mathcal{C}^{\theta}$ le $\ZZ[\zeta_d]$-module libre engendr\'e par les 
caract\`eres tordus de modules d'Harish-Chandra $\theta$-stables admissibles et irr\'eductibles (\'etendus \`a $G^+$). Le module
$\mathcal{C}^{\theta}$ se d\'ecompose en une somme directe~:
$$\mathcal{C}^{\theta} = \bigoplus_{\lambda^1 \in \mathfrak{t}_{\CC}^{1*} / W}  \mathcal{C}^{\theta}_{\lambda^1}, 
\ \ \ \mbox{ avec } \mathcal{C}^{\theta}_{\lambda^1} = \{ \Theta \in \mathcal{C}^{\theta} \; : \; Z(\g_{\CC})  \mbox{ op\`ere sur } \Theta \mbox{ par } \chi_{\lambda} = \chi_{\lambda^1} \circ \mathrm{N} \}.$$
L'anneau $\mathcal{F}^{\theta}$ des repr\'esentations
virtuelles de dimension finie de $\G$ invariante par $\theta$ op\`ere par tensorisation
sur $\mathcal{C}^{\theta}$ et le munit d'une structure naturelle de $\mathcal{F}^{\theta}$-module (cf. d\'emonstration du th\'eor\`eme \ref{T26}). 
Rappelons -- voir \eqref{DF} -- qu'une repr\'esentation $\varphi \in \mathcal{F}^{\theta}$ a un caract\`ere tordu dont la restriction \`a
$T^1$ est de la forme~:
$$\varphi  = \sum_{\mu \in X_p^{\rm res} (\TT^1)} m_{\mu} e^{\mu},$$
o\`u $m_{\mu}$ est la trace de $\theta$ sur l'espace propre de la valeur propre $\mu$. 

\`A tout $\lambda^1 \in \mathfrak{t}_{\CC}^{1*}$ il correspond $\lambda = \mathrm{N} (\lambda^1) \in \mathfrak{t}_{\CC}^{*}$ qui est $\theta$-stable et tel que $\lambda_{| \mathfrak{t}_{\CC}^1} = \lambda^1$. On a alors $\chi_{\lambda} = \chi_{\lambda^1}\circ \mathrm{N}$. Dans la suite on identifie ainsi $\mathfrak{t}_{\CC}^{1 *}$ \`a un sous-espace de $\mathfrak{t}_{\CC}^{*}$. Rappelons en particulier que l'on identifie ainsi les \'elements $\mu \in X_p^{\rm res} (\TT^1)$ \`a
des \'el\'ements $\theta$-stables de $X_p (\TT)$. 

Fixons alors $\lambda_0=\lambda_0^1 \in \mathfrak{t}_{\CC}^{1*}$. On dit qu'une famille 
$$\{ \Theta_{\lambda} \; : \; \lambda \in X_p^{\rm res} (\TT^1 ) + \lambda_0 \} \subset \mathcal{C}^{\theta} ,$$
index\'ee par les translat\'es de $\lambda_0 \in \mathfrak{t}^{*}_{\CC}$ par le r\'eseau
$X_p^{\rm res} (\TT^1 ) \subset X_p (\TT)$, d\'epend de fa\c{c}on {\it coh\'erente} du param\`etre $\lambda$ si 
\begin{equation} \label{338}
\begin{split} 
& \mathrm{(a)} \ \Theta_{\lambda} \in \mathcal{C}^{\theta}_{\lambda^1} \quad \mbox{ et } \\
& \mathrm{(b)} \ \varphi \Theta_{\lambda} = \sum_{\mu \in X_p^{\rm res} (\TT^1)} m_{\mu} \Theta_{\lambda + \mu} ,
\end{split}
\end{equation}
pour tout $\lambda \in X_p^{\rm res} (\TT^1 ) + \lambda_0$ et tout $\varphi \in \mathcal{F}^{\theta}$. Remarquons que si $\lambda = \mu + \lambda_0$ avec
$\mu \in X_p^{\rm res} (\TT^1 )$, alors $\lambda$ est $\theta$-stable de sorte que $\lambda^1 = \lambda_{| \mathfrak{t}_{\CC}^1}$ ou encore $\chi_{\mu + \lambda_0} \circ \mathrm{N} = \chi_{\lambda}$. 

Le choix du tore maximal $T$ n'est pas important dans la d\'efinition ci-dessus. 
Noter que d'apr\`es \S \ref{ChgtAutom} si $\delta \in L$ est un \'el\'ement semisimple qui stabilise
une paire $(\B ' , \T')$ les 
tores $\T^1$ et $\T' \- {}^{1}$ ne diff\`erent que d'un automorphisme int\'erieur de $\G$. 
On peut utiliser cet automorphisme pour transf\'erer la param\'etrisation d'une famille 
coh\'erente \`a une param\'etrisation par des \'el\'ements de $\mathfrak{t}_{\CC}' \- {}^{1*}$. Ainsi
reparam\'etr\'ee, la famille est coh\'erente si et seulement la famille avant reparam\'etrage l'\'etait.

\subsection{} Fixons $\lambda_0 \in \mathfrak{t}_{\CC}^{1*}$.
D'apr\`es le th\'eor\`eme \ref{T26}, on peut \'ecrire~:
$$\left[ |\Delta_{\theta} | \Theta_0 \right] (x \exp H) = \sum_{w \in W} c_0 (x,w) e^{\lambda_0 (wH)}$$
au voisinage d'un \'el\'ement $\theta$-r\'egulier $x\in T^{\theta}$. Ici les constantes $c_0 (x,w)$
ne sont g\'en\'eralement pas uniquement d\'etermin\'ees par $\Theta_0$ mais, si l'on note 
$W_0$ le stabilisateur de $\lambda_0$ dans $W$, la
somme
$$c_{\lambda_0} (x, w) := \sum_{v \in W_0} c_0 (x, wv)$$
est bien uniquement d\'etermin\'ee par $\Theta_0$. Notons $\Theta_{\lambda_0} = N_0 
\Theta_0$, avec $N_0$ \'egal au cardinal de $W_0$. On a alors~:
\begin{equation} \label{init}
\left[ |\Delta_{\theta} | \Theta_{\lambda_0} \right] (x \exp H) = \sum_{w \in W} c_{\lambda_0} (x,w) e^{\lambda_0 (wH)} .
\end{equation}


\begin{lem} \label{L344}
Il existe une famille de caract\`eres tordus 
$$\{ \Theta_{\lambda} \; : \; \lambda \in X_p^{\rm res} (\TT^1 ) + \lambda_0 \}$$ 
qui d\'epende de fa\c{c}on coh\'erente de $\lambda$ et telle que
\begin{enumerate}
\item $\Theta_{\lambda_0} = N_0 \Theta_0$, 
\item $\Theta_{w \lambda} = \Theta_{\lambda}$ ($\lambda \in X_p^{\rm res} (\TT^1 ) + \lambda_0$, $w \in W_0$).
\end{enumerate}
Cette famille v\'erifie en outre les identit\'es
\begin{equation}  \label{idc}
c_{\lambda+\mu} (x, w)  = e^{w^{-1} \mu} (x) c_{\lambda} (x, w)
\end{equation}
pour tous $\lambda \in X_p^{\rm res} (\TT^1 ) + \lambda_0$, $\mu \in X_p^{\rm res} (\TT^1 )$ et $x$
$\theta$-r\'egulier dans $T^{\theta}$.
\end{lem}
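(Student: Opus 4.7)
\medskip

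\textbf{Plan.} The construction is the twisted analogue of the Schmid--Zuckerman coherent continuation procedure. For each $\mu \in X_p^{\rm res}(\TT^1)$, fix a $\theta$-invariant finite-dimensional irreducible representation $\tau_\mu$ of $\G$, extended to $G^+$, in whose weight expansion $\mu$ appears (for instance with highest weight in the $W$-orbit of a dominant shift of $\mu$); write $\varphi_\mu \in \mathcal{F}^\theta$ for its twisted character. Using the $\mathcal{F}^\theta$-module structure on $\mathcal{C}^\theta$, decompose $\varphi_\mu \cdot \Theta_0 \in \mathcal{C}^\theta$ along generalised infinitesimal characters, and define $\Theta_{\lambda_0 + \mu}$ as $m_\mu^{-1}$ times the projection onto $\mathcal{C}^\theta_{(\lambda_0 + \mu)^1}$, averaged over $W_0$ and rescaled by the initial factor $N_0$ of~(1). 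Zuckerman's classical independence argument shows that the resulting family does not depend on the auxiliary choice of $\tau_\mu$. Property~(a) of \eqref{338} is immediate; for~(b), given $\varphi \in \mathcal{F}^\theta$, expand the product $\varphi \cdot \varphi_\mu \in \mathcal{F}^\theta$ via \eqref{DF} and project onto each infinitesimal character component to obtain $\varphi \Theta_{\lambda_0 + \mu} = \sum_{\mu'} m_{\mu'} \Theta_{\lambda_0 + \mu + \mu'}$. The symmetry $\Theta_{w\lambda} = \Theta_\lambda$ for $w \in W_0$ is built in by the $W_0$-averaging.

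\textbf{The coefficient identity.} Fix $x \in T^\theta$ $\theta$-regular and a neighbourhood $\mathcal{V}$ of $0$ in $\mathfrak{t}^1$ on which Theorem~\ref{T26} applies. Multiplying the expansion of $|\Delta_\theta|\Theta_\lambda$ by $\varphi_\mu(x\exp H) = \sum_{\mu'} m_{\mu'} e^{\mu'}(x) e^{\mu'(H)}$ and applying (b) yields
\begin{equation*}
\sum_{w,\mu'} m_{\mu'} e^{\mu'}(x) c_\lambda(x,w) e^{\mu'(H) + \lambda(wH)} = \sum_{w, \mu'} m_{\mu'} c_{\lambda + \mu'}(x,w) e^{(\lambda + \mu')(wH)}.
\end{equation*}
Rewriting $\mu'(H) = (w\mu')(wH)$ for the standard $W$-action and substituting $\nu = w\mu'$, together with the $W$-invariance $m_{w^{-1}\nu} = m_\nu$ of the twisted weight multiplicities (valid because $W \subset W(\G, \T)^\theta$ admits representatives that commute with $\theta$), transforms the left-hand side into $\sum_{w, \nu} m_\nu e^{w^{-1}\nu}(x) c_\lambda(x,w) e^{(\lambda + \nu)(wH)}$. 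For $\lambda$ generic enough that the linear forms $(\lambda + \nu)(w \cdot)$ are pairwise distinct on $\mathfrak{t}^1$, matching coefficients gives $c_{\lambda + \nu}(x,w) = e^{w^{-1}\nu}(x) c_\lambda(x,w)$ whenever $\nu$ is a weight of some $\tau_\mu$. Multiplicativity in $\mu$ (using that the identity composes as $e^{w^{-1}(\mu + \mu')} = e^{w^{-1}\mu} e^{w^{-1}\mu'}$) and a density argument in $\lambda$ extend \eqref{idc} to all $\mu \in X_p^{\rm res}(\TT^1)$ and all $\lambda$.

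\textbf{Main obstacle.} The principal subtlety beyond the untwisted Hecht--Schmid setting is the presence of $\ZZ[\zeta_d]$-coefficients throughout: the twisted multiplicity $m_\mu \in \ZZ[\zeta_d]$ on the extremal weight space is a root of unity, hence invertible, which is what allows the normalisation in the plan above; otherwise the whole construction must live in $\mathcal{F}^\theta$ and $\mathcal{C}^\theta$ rather than in integral Grothendieck groups. A secondary, purely bookkeeping, point is that the $W_0$-averaging needed to make \eqref{init} well-defined must be compatible with the coherent translation procedure, which is precisely the purpose of the initial normalisation $\Theta_{\lambda_0} = N_0 \Theta_0$ in~(1).
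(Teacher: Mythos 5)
Your proof is correct and follows the same Hecht--Schmid coherent continuation blueprint as the paper, but the specific translation object differs, and this is worth noting. The paper tensors $\Theta_0$ with the virtual twisted character $\psi = \sum_{v\in W} e^{v\mu} \in \mathcal{F}^\theta$ for a single, sufficiently dominant $\mu$; by construction this $\psi$ has coefficient exactly $1$ at $\mu$, so the projection onto $\mathcal{C}^\theta_{(\lambda_0+\mu)^1}$ needs no rescaling, and the normalisation issues over $\ZZ[\zeta_d]$ never surface. You instead tensor with the twisted character of a genuine irreducible $\tau_\mu$ (extremal weight $\mu$), project, and renormalise by $m_\mu^{-1}$ — which is legitimate precisely because, as you correctly observe, the twisted multiplicity on a one-dimensional extremal weight space is a root of unity, hence a unit in $\ZZ[\zeta_d]$. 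This makes the $\ZZ[\zeta_d]$-arithmetic more visible but requires invoking the Zuckerman independence-of-choice argument, which you state without detail; the paper's $W$-orbit-sum trick sidesteps the choice altogether. Your verification of \eqref{idc} is actually more explicit than the paper's (which just says \og on vérifie facilement\fg), and your justification that $m_{w\nu}=m_\nu$ via lifting $W\cong W(\G^1,\T^1)$ into $\G^\theta$ is exactly the right point; just be careful that the generic-coefficient-matching step only yields \eqref{idc} directly when $m_\nu$ is a unit, which is why the reduction to extremal weights followed by composition/density is genuinely needed rather than cosmetic.
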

\begin{proof} La d\'emonstration est identique \`a celles de \cite[Lem. 3.39 \& 3.44]{HechtSchmid}. Il suffit
de consid\'erer un poids $\mu \in X_p^{\rm res} (\TT^1 )$ suffisamment dominant pour que
$\lambda := \lambda_0 + \mu$ ne soit $W$-conjugu\'e \`a un $\lambda_0 + v \mu$ ($v \in W$)
que si $v \in W_0$. La somme
$$\psi := \sum_{ v \in W} e^{v \mu}$$
d\'efinit un caract\`ere tordu virtuel dans $\mathcal{F}^{\theta}$. Et la conclusion du lemme
nous force \`a prendre $\Theta_{\lambda}$ \'egal \`a la projection de $\psi \Theta_0 \in \mathcal{C}^{\theta}$ dans $\mathcal{C}^{\theta}_{\lambda}$. On v\'erifie facilement l'identit\'e sur les constantes
\eqref{idc} dans ce cas. On se ram\`ene \`a un $\lambda$ quelconque en inversant le proc\'ed\'e. 
\end{proof}

\subsection{} Rappelons que nous abr\'egeons le membre de droite de \eqref{O} en posant~:
\begin{equation} \label{thetan}
\Theta_{\n , \theta} (V) = \frac{\sum_{q} (-1)^q \Theta_{MA , \theta} (H_q (\n , V))}{D^{\theta}_{\n} (t)} .
\end{equation}

Maintenant soit $\lambda \in \mathfrak{t}^*_{\CC}$ tel que $\chi_{\lambda}$ soit \'egal au 
caract\`ere infinit\'esimal de $\pi$. Alors $\chi_{\lambda}$ est $\theta$-stable et il existe $\lambda^1 
\in \mathfrak{t}^{1*}_{\CC}$ tel que $\chi_{\lambda} = \chi_{\lambda^1} \circ N$. 
De m\^eme, un caract\`ere qui intervient de fa\c{c}on non triviale dans la combinaison
lin\'eaire \eqref{HomTordue} est n\'ecessairement $\theta$-stable et il d\'ecoule de 
\cite[Cor. 3.32]{HechtSchmid} qu'il est \'egal \`a $w\lambda^1 + \rho_P$ pour un certain $w \in W$. 
D'apr\`es le th\'eor\`eme \ref{T26} la fonction $|\Delta_{MA , \theta} | \Theta_{MA , \theta} (H_q (\n , V))$ est 
localement une combinaison
lin\'eaire d'exponentielles de la forme $e^{w \lambda^1 + \rho_P}$. Mais on a~:
$$\Delta_{\theta}  = e^{-\rho_P} D_{\n}^{\theta}  \Delta_{MA, \theta} .$$
On en d\'eduit donc que si $x \in T^1$ est $\theta$-r\'egulier, il existe un voisinage $\mathcal{V}$ de $0$ dans $\mathfrak{t}^1$ tel que pour tout $H \in \mathcal{V}$  on a~:
\begin{equation} \label{Fthetan}
\left[ |\Delta_{\theta} | \Theta_{\n , \theta} \right] (x \exp H) = \sum_{w \in W} \tilde{c}_{\lambda} (x,w) 
e^{\lambda^1 (wH)} ,  
\end{equation}
o\`u les $\tilde{c}_{\lambda} (x, w)$ sont des constantes dans $\CC$. Il d\'ecoule alors 
de \cite[Lem. 3.59]{HechtSchmid} que la continuation coh\'erente s'applique
aussi bien \`a $\Theta_{\n , \theta}$ qu'\`a $\Theta_{\theta}$. On peut donc ins\'erer $\Theta_{\n , \theta}$
dans une famille coh\'erente de sorte que 
\begin{equation} \label{360}
\tilde{c}_{\lambda + \mu} (x, w) = e^{w^{-1} \mu} (x) \tilde{c}_{\lambda} (x,w),
\end{equation}
pour tous $\lambda \in X_p^{\rm res} (\TT^1 ) + \lambda_0$, $\mu \in X_p^{\rm res} (\TT^1 )$ et $x$
$\theta$-r\'egulier dans $T^{\theta}$.

D\'emontrer \eqref{O} revient \`a v\'erifier les identit\'es~:
\begin{equation} \label{01}
c_{\lambda} (x, w) = \tilde{c}_{\lambda} (x,w), \ \ \ \mbox{ pour } x \in T^1 \mbox{ $\theta$-r\'egulier}.
\end{equation}
Les relations \eqref{360} et leurs homologues du lemme \ref{L344} impliquent que si $C$ est une constante
strictement positive quelconque, il suffit de v\'erifier \eqref{01} lorsque 
\begin{equation}\label{362}
\mathrm{Re} \langle w^{-1} \lambda , \alpha \rangle < -C \ \ \ \mbox{ pour tout } \alpha \in R_{\rm res}^{\rm n.i.}.
\end{equation}
(Rappelons qu'ici $\lambda = \mathrm{N} (\lambda^1) \in \mathfrak{t}_{\CC}^*$ est $\theta$-stable.)

\section{D\'emonstration du th\'eor\`eme \ref{Osborne}}

Soit donc $V$ le module d'Harish-Chandra d'une repr\'esentation (admissible) irr\'eductible $\theta$-stable $\pi$ (\'etendue \`a $G^+$) de caract\`ere infinit\'esimal $\chi_{\lambda}$ avec $\lambda = \mathrm{N} (\lambda^1) \in \mathfrak{t}_{\CC}^*$. Nous voulons v\'erifer l'identit\'e \eqref{O}.

\subsection{} D'apr\`es le chapitre pr\'ec\'edent (par continuation coh\'erente) on est ramen\'e \`a d\'emontrer \eqref{O} sous
l'hypoth\`ese \eqref{362}. 

Seule la $W$-orbite de $\lambda$ est bien d\'efinie, on peut donc supposer
$\mathrm{Re} \langle \lambda , \alpha \rangle \leq 0$ pour tout $\alpha \in R_{\rm res}^{\rm n.i.}$. De sorte que
\eqref{362} force $\mathrm{Re} \langle \lambda , \alpha \rangle < -C$ et $w$ trivial. 
Il suffit donc de v\'erifier les identit\'es \eqref{01} lorsque~:
\begin{equation} \label{516}
\begin{split}
&\mathrm{(a)} \ {\rm Re} \langle \lambda , \alpha \rangle < -C \ \ \ \mbox{ pour tout } \alpha \in R_{\rm res}^{\rm n.i.}, \mbox{ et } \\
&\mathrm{(b)} \ w \mbox{ est trivial}.
\end{split}
\end{equation}
Ici $C$ est une constante positive \`a fixer ult\'erieurement. 

Puisque $\lambda = \mathrm{N} (\lambda^1) \in \mathfrak{t}_{\CC}^*$ on a $\lambda^1 = \lambda_{| \mathfrak{t}_{\CC}^1}$. Posons $e= \lambda_{| \mathfrak{a}}$ et $f = \lambda_{| \mathfrak{a}^1}$.

On d\'efinit la $f$-composante 
de $\Theta_{\n , \theta} (V)$, au voisinage d'un \'el\'ement $\theta$-r\'egulier $x \in T^1$, par~:
\begin{equation} \label{ethetan}
\begin{split}
\Theta_{\n , \theta} (V)_{f} & = \frac{\sum_{w} \tilde{c}_{\lambda} (x,w) 
e^{\lambda (wH)}}{|\Delta_{MA, \theta}|} e^{\rho_P (H)}  \\
& = \sum_q (-1)^q \Theta_{MA, \theta} (H_q (\n , V)_f ), 
\end{split}
\end{equation}
o\`u dans la premi\`ere somme $w$ parcourt l'ensemble 
$$\{ w \in W \; : \; \lambda (w \cdot)_{| \mathfrak{a}} = e\}=\{ w \in W \; : \; \lambda (w \cdot)_{| \mathfrak{a}^1} = f\}.$$ 
De la m\^eme mani\`ere on d\'efinit
\begin{equation} \label{etheta}
\Theta_{\theta} (V)_f = \frac{\sum_{w} c_{\lambda} (x,w) e^{\lambda (wH)}}{|\Delta_{MA, \theta}|} e^{\rho_P (H)}.
\end{equation}
D'apr\`es \eqref{516}, le th\'eor\`eme \ref{Osborne} d\'ecoulera de la v\'erification que~:
\begin{equation} \label{521}
\Theta_{\theta} (V)_f = \Theta_{\n , \theta} (V)_f.
\end{equation}

\begin{lem} \label{Ltruc}
Supposons que $\lambda$ v\'erifie \eqref{516} avec $C=0$.
Alors sur les \'el\'ements $\theta$-r\'eguliers de $T^1$, on a~:
$$\Theta_{\n , \theta} (V)_{f} = \Theta_{MA,\theta} (H_0 (\n , V)_{f} ).$$
\end{lem}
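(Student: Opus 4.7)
Le plan est de montrer que sous l'hypoth\`ese \eqref{516} avec $C=0$, l'exposant restreint $f = \lambda_{|\mathfrak{a}^1}$ est \emph{minimal} pour l'ordre $\leq_{\rm res}$ parmi les exposants restreints de $V$ apparaissant dans $H_*(\mathfrak{n},V)$. La Proposition \ref{ExpMax2} donnera alors $H_q(\mathfrak{n},V)_f = 0$ pour tout $q > 0$, et le lemme d\'ecoulera imm\'ediatement de la d\'efinition \eqref{ethetan}, qui \'ecrit $\Theta_{\mathfrak{n},\theta}(V)_f$ comme somme altern\'ee
$$\Theta_{\mathfrak{n},\theta}(V)_f = \sum_{q\geq 0} (-1)^q \Theta_{MA,\theta}\bigl(H_q(\mathfrak{n},V)_f\bigr).$$

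Premi\`erement, je rappellerais que tout exposant (non restreint) $e$ apparaissant dans un $H_q(\mathfrak{n},V)$ est de la forme $(w\lambda)_{|\mathfrak{a}}$ pour un $w \in W$ (cons\'equence du lemme de Casselman--Osborne et du caract\`ere infinit\'esimal $\chi_{\lambda}$ de $\pi$, cf. \cite[Cor.~3.32]{HechtSchmid}). Par restriction \`a $\mathfrak{a}^1$, les exposants restreints apparaissant dans $H_*(\mathfrak{n},V)$ sont donc tous de la forme $(w\lambda)_{|\mathfrak{a}^1}$, avec $w \in W$.

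Deuxi\`emement, je v\'erifierais que l'antidominance \eqref{516}(a) (avec $C=0$, c'est-\`a-dire $\mathrm{Re}\langle \lambda,\alpha\rangle \leq 0$ pour tout $\alpha \in R_{\rm res}^{\rm n.i.}$) entra\^{\i}ne, pour tout $w \in W$, une \'egalit\'e
$$w\lambda - \lambda = \sum_{\alpha} n_{\alpha}\, \alpha \qquad (n_{\alpha} \geq 0)$$
o\`u la somme porte sur les racines simples non imaginaires de $(\B,\T)$. En restreignant \`a $\mathfrak{a}^1$ et en invoquant le Lemme \ref{L:res} (qui garantit que la restriction d'une racine non imaginaire est une racine restreinte dans $R_{\rm res}^{\rm n.i.}$, donc positive sur $\mathfrak{a}^1$), on obtient
$$(w\lambda)_{|\mathfrak{a}^1} - f = \sum_{\alpha} n_{\alpha}\, \alpha_{{\rm res}|\mathfrak{a}^1} \geq_{\rm res} 0.$$
Ceci \'etablit la minimalit\'e de $f$.

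L'obstacle principal, qui m'appara\^it plus technique que conceptuel, est la compatibilit\'e propre entre les deux ordres~: l'ordre usuel $\leq$ sur $\mathfrak{t}_{\CC}^*$ (induit par les racines simples de $R$) et l'ordre $\leq_{\rm res}$ sur $\mathfrak{a}^{1*}\otimes\CC$ (induit par les racines simples de $R_{\rm res}^{\rm n.i.}$ restreintes \`a $\mathfrak{a}^1$). Le Lemme \ref{L:res} assure exactement que cette compatibilit\'e a lieu pour les contributions des racines non imaginaires, tandis que les racines imaginaires, qui s'annulent sur $\mathfrak{a}^1$, ne contribuent tout simplement pas \`a la diff\'erence $(w\lambda)_{|\mathfrak{a}^1} - f$. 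Une fois cette compatibilit\'e acquise, le reste de la d\'emonstration est une application directe de la Proposition \ref{ExpMax2}.
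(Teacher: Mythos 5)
Your proposal is correct and follows essentially the same route as the paper: identify all restricted homology exponents as $(w\lambda)_{|\mathfrak{a}^1}$ for $w\in W$ via \cite[Cor.~3.32]{HechtSchmid}, use the antidominance hypothesis \eqref{516} (with $C=0$) together with Lemma~\ref{L:res} to write $(w\lambda)_{|\mathfrak{a}^1}-f$ as a nonnegative combination of nonimaginary restricted roots, conclude that $f$ is $\leq_{\rm res}$-minimal, and invoke Proposition~\ref{ExpMax2} so that only $q=0$ contributes to \eqref{ethetan}. The one step you flag as needing verification --- that $w\lambda-\lambda$ can be written with nonnegative coefficients over nonimaginary simple roots --- is also left tacit in the paper's own proof, so your level of rigour matches the original.
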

\begin{proof}
Tous les $\theta$-exposants d'homologie apparaissant dans le membre de droite de \eqref{Fthetan} sont de la forme $w\lambda _{| \mathfrak{a}^1}$. 
Mais pour tout $w \in W$, le caract\`ere $w\lambda$ est \'egal \`a $w \lambda_0 + w \mu$ et il d\'ecoule donc
de \eqref{516} (en prenant $C=0$), que l'on peut \'ecrire~: 
$$w \lambda = \lambda + \sum_{\alpha \in R_{\rm res}^{\rm n.i.}} x_{\alpha} \alpha \ \ \ (x_{\alpha} \geq 0).$$ 
En particulier $\lambda_{| \mathfrak{a}^1}$ ne peut s'\'ecrire $\lambda_{| \mathfrak{a}^1} = 
w\lambda _{| \mathfrak{a}^1} + \sum m_{\alpha} \alpha_{\rm res}$, comme on le voit par restriction de 
l'\'egalit\'e ci-dessus \`a $\mathfrak{a}^1$. L'exposant restreint $f$ est donc minimal et n'appara\^{\i}t que dans $H_0$, d'apr\`es la 
proposition \ref{ExpMax2}.
\end{proof}

Remarquons que puisque $f= e_{| \mathfrak{a}^1}$ avec $e$ $\theta$-stable, on a~:
\begin{equation} \label{ef}
\Theta_{MA,\theta} (H_0 (\n , V)_{f} ) = \Theta_{MA,\theta} (H_0 (\n , V)_{e} ).
\end{equation} 
Ainsi bien qu'en g\'en\'eral $H_0 (\n , V )_e =0$ n'implique pas $H_0 (\n , V)_f = 0$, cela
implique n\'eanmoins que $\Theta_{MA,\theta} (H_0 (\n , V)_{f} ) = 0$. La proposition suivante montre que l'\'enonc\'e correspondant est \'egalement vrai pour $\Theta_{\theta} (V)_{f}$.

\begin{prop} \label{615}
Supposons que $\lambda$ v\'erifie \eqref{516} avec $C=0$ et que $H_0 (\n , V)_e = 0$. Alors $\Theta_{\theta} (V)_f$ est nul sur tout \'el\'ement $\theta$-r\'egulier contractant de $T^1$.
\end{prop}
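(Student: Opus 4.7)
The plan is to embed $V$ into a standard module via Casselman's subrepresentation theorem, to compute the twisted character of that standard module using Lemma~\ref{Hirai}, and then to relate the leading Jacquet module $H_0(\n, V)_e$ to the contracting-chamber asymptotics of $\Theta_\theta(V)$. Under hypothesis \eqref{516}(a) with $C=0$, the real part of $\lambda$ is non-positive on every non-imaginary restricted root, so the ($\theta$-equivariant form of) Casselman's subrepresentation theorem realizes $V$ as a subquotient of a principal series $I = \mathrm{ind}_{MAN}^G(V_M \otimes \CC_\mu)$, where $V_M \otimes \CC_\mu$ is an irreducible $\theta$-stable $MA$-representation with infinitesimal character $\chi_\lambda$; the intertwiner $A_\theta$ on $V$ extends (uniquely up to a root of unity) to $I$, so that twisted characters compose functorially.

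I would then compute $\Theta_{I,\theta}(x)_f$ on contracting $\theta$-regular $x \in T^1$ using Lemma~\ref{Hirai} together with the local expansion \eqref{514}. The Weyl-chamber argument already employed in the proof of Lemma~\ref{Ltruc} applies verbatim and shows that, under \eqref{516}(b), only the trivial element of $W_{\theta}(G,T)$ contributes to the $f$-component, yielding a clean expression for $\Theta_{I,\theta}(x)_f$ in terms of the $e$-part of $\Theta_{MA,\theta}(V_M \otimes \CC_\mu)$ on the contracting chamber. Applying \eqref{decIrred1} to the $G^+$-Jordan--H\"older filtration of $I$, the $f$-component of $\Theta_{I,\theta}$ decomposes as a $\ZZ[\zeta_d]$-linear combination of the $\Theta_\theta(V_i)_f$ for the $\theta$-stable irreducible subquotients $V_i$ of $I$. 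A twisted version of Frobenius reciprocity
$$\Hom_G(V, \mathrm{ind}_{\overline{P}}^G W) \cong \Hom_{MA}(H_0(\n, V), W),$$
combined with \eqref{ef}, then identifies the contribution of $V$ itself with (a $\ZZ[\zeta_d]$-multiple of) $\Theta_{MA,\theta}(H_0(\n, V)_e)$. The hypothesis $H_0(\n, V)_e = 0$ forces this contribution -- and hence $\Theta_\theta(V)_f$ -- to vanish on the contracting locus.

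\textbf{Main obstacle.} The decisive technical step is the identification of the $V$-contribution with $\Theta_{MA,\theta}(H_0(\n, V)_e)$. In the untwisted Hecht--Schmid framework this is a consequence of Casselman's asymptotic expansion of matrix coefficients; in the twisted case one must verify that the intertwiner $A_\theta$ is compatible with the Jacquet functor and with Frobenius reciprocity, and check that among the $\theta$-stable irreducible subquotients of $I$, none but $V$ itself can contribute the exponent $e$ on the contracting chamber (the other subquotients having strictly larger leading exponents in the sense of $\leq_{\rm res}$). The roots of unity appearing in the Jordan--H\"older multiplicities of \eqref{decIrred1} must also be tracked carefully throughout this reduction.
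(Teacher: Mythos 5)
Votre plan est nettement différent de l'argument du texte, et il contient une lacune sérieuse au point que vous identifiez vous-même comme le \og Main obstacle \fg.

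Le texte ne passe pas par un plongement $V\hookrightarrow I$ ni par une décomposition en facteurs de Jordan--Hölder. Il raisonne par l'absurde et directement sur $V$~: on suppose $c_\lambda(t,w^{-1})\neq 0$ pour un $w$ donnant l'exposant restreint $f$; l'hypothèse $H_0(\n,V)_e=0$ combinée aux estimées de Casselman (\cite[Thm. 8.47]{Knapp}, \cite[Lem. 6.46]{HechtSchmid}) donne une majoration \emph{stricte} des coefficients matriciels $K$-finis de $\pi$ sur $\overline{A^{1-}}$, de la forme $O\bigl(e^{(\operatorname{Re}f+\delta\alpha_{\rm res}+\rho_P)}(a)\bigr)$ avec $\delta>0$; cette majoration se propage à $\Theta_{\pi,\theta}$ via \eqref{sumdist} et les lemmes de Bouaziz, puis une décomposition $|\Delta_\theta|\Theta_{\theta,\pi}=\sum X_j h_j$ en opérateurs différentiels et fonctions continues majorées par $e^\tau$, $e^\tau<1$ sur $A^{1-}$. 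Le membre $c_\lambda(t,w^{-1})\neq 0$, lui, produit via \eqref{674}--\eqref{676} une fonction $\Psi=1+\sum c_ie^{\mu_i-\mu}$ dont l'intégrale contre des bosses $g_k$ translatées vers l'infini dans $A^{1-}$ tend vers $1$; la décomposition $\Psi=\sum X_jh_j$ donne une limite $0$. Contradiction.

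Votre \og obstacle \fg\ n'est donc pas une simple vérification de compatibilité de $A_\theta$ avec Jacquet et Frobenius, c'est précisément le contenu analytique non trivial de la preuve. L'étape où vous prétendez que la réciprocité de Frobenius \og identifies the contribution of $V$ itself with (a multiple of) $\Theta_{MA,\theta}(H_0(\n,V)_e)$\fg\ est, de fait, un cas particulier de la conjecture d'Osborne tordue pour $V$ restreinte à sa composante $f$, c'est-à-dire l'énoncé \eqref{O} que la proposition~\ref{615} est censée permettre de démontrer. La réciprocité de Frobenius relie $\Hom_G(V,I)$ à $\Hom_{MA}(H_0(\n,V),W)$ et ne dit rien a priori de la composante $f$ du \emph{caractère} tordu $\Theta_\theta(V)$; le lien entre exposants d'homologie et asymptotique du caractère est exactement ce que l'estimation de Casselman fournit, et c'est à elle que le texte a recours. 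Par ailleurs votre schéma souffre d'une circularité latente~: pour isoler la \og contribution de $V$\fg\ dans $\Theta_{I,\theta}(x)_f$ vous devez éliminer celles des autres sous-quotients $\theta$-stables $V_j$ de $I$; or ces $V_j$ ont eux aussi $H_0(\n,V_j)_e=0$ (unicité du sous-quotient de Langlands), de sorte que montrer $\Theta_\theta(V_j)_f=0$ est une autre instance de la proposition~\ref{615} elle-même. Le texte évite ce cercle en n'introduisant le plongement $V\hookrightarrow I$ \emph{qu'après} la proposition, précisément parce qu'il l'applique alors aux $V_j$ pour justifier $\Theta_\theta(V)_f=\Theta_\theta(I)_f$. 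Enfin, l'invocation du théorème du sous-représentation de Casselman sous l'hypothèse \eqref{516}(a) avec $C=0$ est inexacte~: ce théorème ne dépend pas de cette hypothèse, et il produit une sous-\emph{représentation} et non un sous-quotient; le texte introduit l'induite $I={\rm ind}_P^G(W\otimes\CC_e)$ seulement dans le cas $H_0(\n,V)_e\neq 0$, via le lemme~\ref{615b} (position de Langlands pour $C$ grand), ce qui est un tout autre mécanisme.
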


La d\'emonstration de cette proposition occupe la fin de ce chapitre. Commen\c{c}ons par
montrer comment en d\'eduire \eqref{521}. C'est imm\'ediat si $H_0 (\n , V)_e = 0$~: \`a la fois 
$\Theta_{\theta} (V)_f$ et $\Theta_{\n , \theta} (V)_f$ sont nuls sur les \'el\'ements $\theta$-r\'eguliers 
contractants de $T^1$. Supposons donc $H_0 (\n , V)_e \neq 0$. 

\begin{lem} \label{615b}
Supposons $C$ assez grand. Alors $e$ est en position de 
Langlands.
\end{lem}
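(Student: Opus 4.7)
The plan is to exploit Casselman's correspondence between weights of the Jacquet module $H_0(\n, V)$ and embeddings of $V$ into principal series induced from $P = MAN$. From the hypothesis $H_0(\n, V)_e \neq 0$ one obtains a non-trivial $(\g, K)$-embedding of $V$ into a standard module $I = \mathrm{Ind}_P^G(\sigma \otimes e^e)$ for some irreducible $M$-module $\sigma$ extracted from the $(e + \rho_P)$-generalized weight space of $H_0(\n, V)$. In particular $V$ is realized as a Langlands subquotient whose continuous parameter along $\a$ is $e$, so the statement reduces to verifying that $\mathrm{Re}(e)$ lies in the open negative Weyl chamber of $\a^*$ defined by $\n$.

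For this I would first unpack hypothesis \eqref{516}(a). Decompose $\mathfrak{t}^1 = \mathfrak{t}_c^1 \oplus \a$ with $\mathfrak{t}_c^1$ compact and $\a$ split, and write $\lambda = \lambda_c + e$ accordingly. Every non-imaginary restricted root $\alpha \in R_{\rm res}^{\rm n.i.}$ takes purely imaginary values on $\mathfrak{t}_c^1$ and real values on $\a$; hence
$$\mathrm{Re}\,\langle \lambda, \alpha\rangle \; = \; \mathrm{Re}\,\langle e, \alpha_{|\a}\rangle \qquad (\alpha \in R_{\rm res}^{\rm n.i.}).$$
By Lemme \ref{L:res} the restrictions $\alpha_{|\a}$ describe exactly the positive roots of $\a$ in $\n$. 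Hypothesis \eqref{516}(a) therefore reads $\mathrm{Re}\,\langle e, \beta\rangle < -C$ for every positive root $\beta$ of $\a$ in $\n$.

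To conclude, I would set $C_0 := \max_\beta |\langle \rho_P, \beta\rangle|$, a constant depending only on the root system, and take $C$ strictly greater than $C_0$. Then $\mathrm{Re}\,\langle e, \beta\rangle < -(C - C_0) < 0$ for every positive restricted root $\beta$, so $\mathrm{Re}(e)$ lies strictly in the negative chamber of $\a^*$. This is the open anti-dominance condition that defines Langlands position for the continuous parameter of a standard module induced from the minimal parabolic $P$, and it provides the desired Langlands datum for $V$.

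The main technical point I foresee is purely bookkeeping, around the $\rho_P$-shift: the action of $\a$ on $H_0(\n, V)_e$ is twisted by $\rho_P$ relative to the honest continuous Langlands parameter entering the classification, and one has to ensure that this shift is absorbed by the margin $C - C_0 > 0$. This is precisely the role of the hypothesis ``$C$ assez grand''; once the conventions between the Jacquet-module exponent and the Langlands parameter are carefully matched, the inequality above closes the argument.
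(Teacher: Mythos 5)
Your overall strategy -- reading hypothesis \eqref{516}(a) as an anti-dominance condition on the continuous exponent -- is the same one the paper uses, but you blur precisely the point the paper's one-line proof highlights. You write $\mathfrak{t}^1 = \mathfrak{t}_c^1 \oplus \mathfrak{a}$, so your ``$\mathfrak{a}$'' is actually $\mathfrak{a}^1 = \mathfrak{a} \cap \mathfrak{t}^1$, and what your argument then establishes is that the \emph{restricted} exponent $f = \lambda|_{\mathfrak{a}^1}$ is anti-dominant for the restricted roots on $\mathfrak{a}^1$. That is not yet the conclusion: ``Langlands position'' concerns the exponent $e = \lambda|_{\mathfrak{a}}$ on the full split torus $A$ of the minimal parabolic $P$, with respect to the roots of $\mathfrak{a}$ in $\mathfrak{n}$ -- a different root system. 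The step that carries an $\mathfrak{a}^1$-bound over to an $\mathfrak{a}$-bound is exactly the $\theta$-stability of $e$: since $e$ is $\theta$-invariant, its pairing with a non-imaginary root $\alpha$ of $\mathfrak{a}$ depends only on the $\theta$-orbit of $\alpha$, hence is a positive multiple of the pairing with the corresponding restricted root, which \eqref{516}(a) controls. The paper's entire proof is this observation (``Puisque $e = \lambda|_{\mathfrak{a}}$ est $\theta$-stable, cela d\'ecoule de \eqref{516}''); your write-up never invokes the $\theta$-stability of $e$, so the transfer from $\mathfrak{a}^1$ to $\mathfrak{a}$ is missing.

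Two further remarks. First, the opening paragraph about Casselman's correspondence and an embedding $V \hookrightarrow I$ is not needed for this lemma: the statement is purely about the position of the weight $e$ in $\mathfrak{a}^*_{\CC}$ and uses nothing about $H_0(\mathfrak{n}, V)$; the embedding enters the surrounding argument, not the proof of Lemme \ref{615b}. Second, the explanation at the end that the ``$C$ assez grand'' hypothesis is there to absorb a $\rho_P$-shift is speculative and not the paper's reasoning: the exponents $H_q(\mathfrak{n},V)_e$ are already defined with the $\rho_P$-shift built in, so the Langlands condition is a condition on $e$ itself, not on $e \pm \rho_P$. The genuine role of a large $C$ is to keep the margin after passing from pairings with restricted coroots to pairings with unrestricted ones (the positive factors depend on the type $R_1/R_2/R_3$ of the restricted root) and to set up the subsequent use of Hecht--Schmid Lemma 6.10 on irreducibility of $H_0(\mathfrak{n},V)_e$.
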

\begin{proof} Puisque $e=\lambda_{| \mathfrak{a}}$ est $\theta$-stable, cela d\'ecoule de \eqref{516}.
\end{proof}

On d\'eduit alors de Hecht-Schmid \cite[Lem. 6.10]{HechtSchmid} que (pour $C$ assez grand), le $M$-module $W=H_0 (\n , V)_e$ est irr\'eductible.


Soit alors 
$$I={\rm ind}_{P}^{G}  (W \otimes \CC_{e})$$ 
(induction unitaire depuis $MAN$): c'est un $G^+$-module de fa\c{c}on naturelle, et on a un morphisme naturel $V \hookrightarrow I$. Le sous-quotient $V$ de $I$ est le seul
qui v\'erifie $H_0 (\n , V)_e  \neq 0$ (cf. Hecht-Schmid, d\'emonstration du lemme 6.10). Donc 
$$\Theta_{\theta} (V)_f = \Theta_{\theta} (I)_f$$
\`a l'aide, de nouveau, de la proposition \ref{615}, sur les \'el\'ements $\theta$-r\'eguliers contractants 
de $T^1$. De plus (avec la d\'efinition, contenant la translation par $\rho_P$, de $H_0 (\n , V)_e$), on a
$$H_0 (\n , V)_e \cong W \otimes \CC_{e+ \rho_P}$$
comme $MA$-module. L'identit\'e \eqref{521} d\'ecoule donc du lemme \ref{Ltruc}, de l'\'equation \eqref{ef} et du 
calcul suivant relatif \`a la repr\'esentation induite: 

\begin{lem} \label{517}
$$\Theta_{\theta} (I)_f = \Theta_{MA , \theta} (W \otimes \CC_{e+\rho_P}).$$
\end{lem}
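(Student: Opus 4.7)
The strategy is to apply the Hirai-type formula for the twisted character of an induced representation (Lemme~\ref{Hirai}) and then isolate the $f$-component using the Langlands position of $e$ provided by Lemme~\ref{615b}. Throughout one uses the factorization $\Delta_\theta = e^{-\rho_P} D_\n^\theta \Delta_{MA,\theta}$ from \S\ref{1.4} to convert between the two twisted Weyl denominators.

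First, I would apply Lemme~\ref{Hirai} to $I = \mathrm{ind}_P^G(W \otimes \CC_e)$ with inducing data $V_M = W \otimes \CC_e$: for every $\theta$-r\'egulier $t \in T^1$,
$$
\bigl[|\Delta_\theta|\, \Theta_{I,\theta}\bigr](t)
= \frac{1}{\#\,W_\theta(MA,T)} \sum_{v \in W_\theta(G,T)}
\bigl[|\Delta_{MA,\theta}|\, \Theta_{MA,\theta}(W \otimes \CC_e)\bigr](vt).
$$
Combined with the factorization recalled above, this rewrites $\Theta_{I,\theta}$ near $x\exp H$ as a weighted sum of exponentials with exponents of the form $vw\lambda$, where $v$ runs over $W_\theta(G,T)$ and $w$ runs over $W(\mathfrak{m}_\CC \oplus \mathfrak{a}_\CC, \mathfrak{t}_\CC)^\theta$, as in \eqref{514}.

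Next, extracting the $f$-component in the sense of \eqref{etheta} retains only those terms whose exponent satisfies $\lambda(vw\,\cdot\,)_{|\mathfrak{a}^1} = f$. Since every $w$ in the Levi's twisted Weyl group fixes $\mathfrak{a}$ (and hence $\mathfrak{a}^1$) pointwise, this selection reduces to the condition $v\lambda_{|\mathfrak{a}^1} = f$. Under \eqref{516} with $C$ chosen large as in Lemme~\ref{615b}, the exponent $e = \lambda_{|\mathfrak{a}}$ lies strictly inside the positive Langlands chamber on $\mathfrak{a}$, so no $W_\theta(G,T)$-translate can send $e$ to itself except through the Levi's twisted Weyl group $W_\theta(MA,T)$. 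Each such $v \in W_\theta(MA,T)$ preserves $\Theta_{MA,\theta}(W \otimes \CC_e)$ by $\theta$-conjugation invariance, so it contributes one identical copy; summing over these $v$ yields $\#\,W_\theta(MA,T)$ identical terms which exactly cancel the prefactor $1/\#\,W_\theta(MA,T)$.

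Finally, reinjecting $\Delta_\theta = e^{-\rho_P} D_\n^\theta \Delta_{MA,\theta}$ into the definition \eqref{etheta} leaves $\Theta_{MA,\theta}(W \otimes \CC_e)$ multiplied by $e^{\rho_P}$; since $e^{\rho_P}$ simply shifts the central character of the inducing line from $e$ to $e + \rho_P$, one obtains $\Theta_\theta(I)_f = \Theta_{MA,\theta}(W \otimes \CC_{e+\rho_P})$ as desired. The main obstacle is the single-coset reduction in step two: one must verify that the strict Langlands positivity of $e$ on $\mathfrak{a}$ forces coincidences of $\mathfrak{a}^1$-restrictions to occur only within the Levi's twisted Weyl group, and not through some larger set of $W_\theta(G,T)$-translates that would happen to agree on the smaller subspace $\mathfrak{a}^1$. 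Once this is settled, the rest is a matter of bookkeeping with the twisted Weyl denominator and the $\rho_P$-shift.
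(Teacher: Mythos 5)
Your proposal follows the same route as the paper: start from the Hirai-type expression \eqref{514}, extract the $f$-component, reduce the $v$-sum to $W_\theta(MA,T)$ by positivity, cancel the prefactor $1/\#W_\theta(MA,T)$ by $\theta$-conjugation invariance of the Levi's twisted character, and absorb $\rho_P$ through the factorization $\Delta_\theta = e^{-\rho_P}D_\n^\theta\Delta_{MA,\theta}$. The logic is essentially identical.

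The ``main obstacle'' you flag at the end, however, is not actually an obstacle given how the paper sets things up, and this is worth noting. You phrase the worry as: strict Langlands positivity of $e=\lambda_{|\mathfrak{a}}$ on $\mathfrak{a}$ might not control coincidences of $\mathfrak{a}^1$-restrictions, since several $v\in W_\theta(G,T)$ could agree on the smaller subspace. But the hypothesis \eqref{516}(a) is formulated with $\alpha$ running over $R_{\rm res}^{\rm n.i.}$, the \emph{restricted} non-imaginary roots, so the strict anti-dominance is already a statement about $f=\lambda_{|\mathfrak{a}^1}$ on $\mathfrak{a}^1$, not about $e$ on $\mathfrak{a}$. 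Since every $w$ in the Levi factor fixes $\mathfrak{a}^1$ pointwise, the selection condition $(v^{-1}w^{-1}\lambda)_{|\mathfrak{a}^1}=f$ reduces to $(v^{-1}\lambda)_{|\mathfrak{a}^1}=f$; and since $\mathrm{Re}(f)$ is strictly interior to the anti-dominant chamber for the restricted roots, the only $v\in W_\theta(G,T)$ satisfying it are those acting trivially on $\mathfrak{a}^1$, i.e.\ $v\in W_\theta(MA,T)$. The paper carries out exactly this, taking only $C=0$ (strict anti-dominance, no extra margin needed), whereas you invoke the stronger $C$ large as in Lemme \ref{615b}; both suffice here. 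One small slip in your write-up: the reduction should be carried out on $f$, not on $e$ — you state the condition in terms of $e=\lambda_{|\mathfrak{a}}$, but the extraction by definition \eqref{etheta} only sees the restriction to $\mathfrak{a}^1$.
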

\begin{proof}
Noter que $Z(\mathfrak{m}_{\CC} \oplus \mathfrak{a}_{\CC} )$ op\`ere sur $W \otimes \CC_e$ selon le 
caract\`ere $\chi_{\lambda}$. Partons donc de l'expression locale du caract\`ere tordu 
$\Theta_{\theta} (I)$ donn\'ee par \eqref{514}~:
$$
\left[ |\Delta_{\theta} | \Theta_{I ,\theta}\right]  = \frac{1}{\# W_{\theta} (MA , T)} \sum_{v \in W_{\theta} (G , T)} \sum_{w \in W(\mathfrak{m}_{\CC} \oplus \mathfrak{a}_{\CC} , \mathfrak{t}_{\CC})^{\theta}} d_{\lambda} (vx , w) e^{v^{-1} w^{-1} \lambda}.$$
Un terme de la double somme contribue \`a $\Theta_{\theta} (I)_e$ si et seulement si 
$(v^{-1} w^{-1} \lambda )_{|\mathfrak{a}^1} = f$ (avec 
$v \in W_{\theta} ( G, T)$ et $w \in W (\mathfrak{m}_{\CC} \oplus \mathfrak{a}_{\CC} , \mathfrak{t}_{\CC}
)^{\theta}$). Mais, en prenant $C=0$ dans \eqref{516}, l'\'egalit\'e
$$(v^{-1} w^{-1} \lambda )_{|\mathfrak{a}^1} = f = \lambda_{|\mathfrak{a}^1}$$
force $v$ \`a appartenir au groupe $W_{\theta} (MA , T)$. 
Donc seuls les termes avec $v \in W_{\theta} (MA , T)$ dans la double somme contribuent \`a $\Theta_{\theta} (I)_f$. 

Maintenant, le groupe $W_{\theta} (MA , T)$ op\`ere trivialement sur le caract\`ere tordu de 
$W \otimes \CC_e$ . On a donc~:
$$d_{\lambda} (v x , w) e^{v^{-1} w^{-1} \lambda } = d_{\lambda} (x,w) e^{w^{-1} \lambda}$$
pour tout $v \in W_{\theta} (MA , T)$. On obtient donc que l'expression locale de $|\Delta_{MA , \theta}|\Theta_{\theta} (I)_f$
est \'egale \`a
$$\sum_{w \in W (\mathfrak{m}_{\CC} \oplus \mathfrak{a}_{\CC} , \mathfrak{t}_{\CC}
)^{\theta}} d_{\lambda} (x,w) e^{w^{-1} \lambda + \rho_P}$$
qui est aussi l'expression locale de $|\Delta_{MA , \theta} | \Theta_{MA , \theta} (W \otimes \CC_{e+\rho_P})$.
\end{proof}

Dans les paragraphes qui suivent on met progressivement en place les ingr\'edients n\'ecessaires 
\`a la d\'emonstration de la proposition \ref{615}. On suppose dor\'enavant que $H_0 (\n , V)_e = 0$. Nous suivons la d\'emonstration de \cite[Prop. 6.15]{HechtSchmid}.

\subsection{} 
D'apr\`es \cite[(4.24)]{HechtSchmid}, tout exposant 
dominant $e'$ du module de Harish-Chandra $V$
est de la forme $e'=(w \cdot \lambda)_{|\mathfrak{a}}$ pour un certain $w \in W(\G , \T)$ et intervient 
dans $H_0 (\n , V)$. Puisque l'on suppose $H_0 (\n , V)_e$ \'egal \`a $\{ 0 \}$, $e$ n'est pas un 
exposant dominant, l'exposant $e'$ est donc diff\'erent de $e$. Soit il est $\theta$-stable et 
$e'_{| \mathfrak{a}^1} \neq f$ soit il n'est pas $\theta$-stable et on \'ecrit 
$e'  = we = e + \sum_{\alpha \in R^{\rm n.i.}} x_{\alpha} \alpha$ o\`u les $x_{\alpha}$ sont positifs (car, d'apr\`es \eqref{516}, $e$ est tr\`es n\'egatif) et non tous nuls. Soit 
$\mathrm{N}$ la projection sur $\mathfrak{a}_1$~: puisque $\mathrm{N} e = e$, on a $\mathrm{N} e' = e + \sum_{\alpha} x_{\alpha} \mathrm{N} \alpha$. 
Si $\mathrm{N} e'$ est diff\'erent de $e$, il existe donc un $x_{\alpha} >0$ tel que $\mathrm{N} \alpha \neq 0$.
Dans tous les cas la condition \eqref{516} (pour $C$ assez grand) implique qu'il existe une racine $\alpha_{\rm res} \in R_{\rm res}^{\rm n.i.}$ et une constante $\delta$ strictement positive telles que
pour tout $\beta \in R_{\rm res}^{\rm n.i.}$ on a~:
\begin{equation} \label{656}
{\rm Re} \langle e'_{|\mathfrak{a}^1} , \beta \rangle \geq {\rm Re} \langle f , \beta \rangle + \delta \langle \alpha_{\rm res}, \beta \rangle \ \mbox{ et } \ \alpha_{\rm res} \- {}_{| \mathfrak{a}^1} \neq 0 .
\end{equation}
Soit $A^{1-} = \exp \mathfrak{a}^{1-}$.
Quitte \`a diminuer $\delta$, il d\'ecoule de \eqref{656} et de \cite[Thm. 8.47]{Knapp} (voir \'egalement \cite[Lem. 6.46]{HechtSchmid})~\footnote{On prendra garde au fait que les notions d'exposants diff\`erent d'une
translation par $\rho_P$ dans ces deux r\'ef\'erences ainsi qu'au fait que dans l'une l'asymptotique est
dans $A^+$ dans l'autre dans $A^-$. Nous suivons les conventions de \cite{HechtSchmid}.}
que tout coefficient
matriciel $K$-fini de $\pi$ est major\'e sur $\overline{A^{1-}}$ par un multiple de 
$e^{({\rm Re} f + \delta \alpha_{\rm res} +\rho_P )} (a)$.

\subsection{} Les r\'esultats du paragraphe pr\'ec\'edent se traduisent en une borne sur $\Theta_{\pi , \theta}$.

Pour tout $\tau \in \widehat{K}$ on note $d_{\tau}$ le degr\'e de $\tau$. Soit
$E_{\tau}$ la projection orthogonale de $V$ sur $V_{\tau}$ la composante
$\tau$-isotypique de $V$. En choisissant une base orthonormale de $V$ compatible avec la 
d\'ecomposition de $V$ en $K$-types, on voit que
\begin{equation} \label{sumdist}
\Theta_{\pi , \theta} (g) = \sum_{\tau \in \widehat{K}} {\rm trace} (E_{\tau} \pi (g) A_{\theta} E_{\tau})
\end{equation}
au sens des distributions.
Noter que le sous-groupe compact $K$ est $\theta$-stable et donc que $A_{\theta}$ pr\'eserve la 
d\'ecomposition de $V$ en $K$-types. 

Maintenant ${\rm trace} (E_{\tau} \pi (g) A_{\theta} E_{\tau})$ est une somme de coefficients matriciels
associ\'es \`a des vecteurs de $V_{\tau}$. Un proc\'ed\'e bien connu permet de se ramener
\`a des coefficients matriciels $K$-finis, voir par exemple \cite[Lem. 6.23]{HechtSchmid} ou \cite[Lem. 3.4.1 \& Lem. 3.5.1]{Bouaziz} (pour les groupes
non-connexes). Il d\'ecoule alors du paragraphe pr\'ec\'edent que pour tout $a \in A^{1-}$ on a~:
\begin{equation} \label{chh}
|{\rm trace} (E_{\tau} \pi (a) A_{\theta} E_{\tau})| = O\left( d_{\tau}^3 e^{({\rm Re} f + \delta \alpha_{\rm res} + \rho_P ) } (a) \right).
\end{equation}

\subsection{} \label{xjhj} \'Etant donn\'e un r\'eel strictement positif $\varepsilon$ on pose 
$$T^{1}_{\varepsilon} = \left\{ t \in T^1 \; : \;  |\lambda_{\alpha_{\rm res}  , j} e^{\alpha_{\rm res}} (t) -1| > \varepsilon, \ \forall \alpha_{\rm res} \in R_{\rm res}^{\rm n.i.}, \ j = 1 , \ldots , n_{\alpha_{\rm res}} \right\}.$$
(Les notations sont celles du \S \ref{1.4}.)
Noter que $T^{1}_{\varepsilon}$ est ouvert dans $T^1$, et que la r\'eunion $\cup_{\varepsilon >0} T^{1}_{\varepsilon}$
co\"{\i}ncide avec l'ensemble des \'el\'ements $\theta$-r\'eguliers contractants de $T^1$.
En utilisant les r\'esultats du paragraphe pr\'ec\'edent, 
la d\'emonstration de \cite[Lem. 6.39]{HechtSchmid} (voir \'egalement \cite[Lem. 3.5.1]{Bouaziz} dans le cas
non-connexe) implique que la distribution qui \`a une fonction $g$ \`a support compact
dans l'ensemble des \'el\'ements $\theta$-r\'eguliers associe l'int\'egrale
$$\int_{T^1}  | \Delta_{\theta}| \Theta_{\theta , \pi} g dt $$
s'exprime comme une combinaison lin\'eaire $\sum_{j=1}^m X_j h_j$ o\`u les $X_j$ sont
des op\'erateurs diff\'erentiels invariants sur $T^1$ et les $h_j$ des fonctions continues sur 
les \'el\'ements $\theta$-r\'eguliers contractants de $T^1$ telles que
\begin{equation} \label{hj}
\begin{split}
|h_j (t a ) | & = O_{\varepsilon} \left( |\Delta_{\theta} (ta) | e^{({\rm Re} f + \delta \alpha_{\rm res} + \rho_P ) } (a) \right) \\
& = O_{\varepsilon} \left( e^{({\rm Re} f + \delta \alpha_{\rm res}) } (a) \right), \ \ \ (ta \in T^1_{\varepsilon} , \ a \in A^{1-} ).
\end{split}
\end{equation}
Ici les $X_j$ et les $h_j$ peuvent \^etre choisis ind\'ependamment de $\varepsilon$.

\subsection{D\'emonstration de la proposition \ref{615}}

Supposons par l'absurde que $\Theta_{\theta} (V)_f$ soit non nul au voisinage d'un \'el\'ement
$\theta$-r\'egulier contractant $t \in T^1$. Il existe alors un \'el\'ement $w \in W$ tel que 
\begin{equation} \label{668}
c_{\lambda} (t , w^{-1}) \neq 0 \ \ \ \mbox{ et } \ \ \ (w \cdot \lambda^1 )_{| \mathfrak{a}^1} = f.
\end{equation}
On fixe un voisinage compact $U$ de l'identit\'e dans $K \cap T^1$, suffisamment petit 
pour que l'on ait~:
$$tU A^{1-} \subset T_{\varepsilon}^1.$$

On numerote $\mu_1 = f , \ldots , \mu_n \in \mathfrak{a}^{1 *} \otimes {\CC}$ les \'el\'ements de 
$\{ \mu_{|\mathfrak{a}^1} \; : \; \mu \in W \cdot \lambda^1 \}$
sans r\'ep\'etition. L'hypoth\`ese \eqref{516} entra\^{\i}ne que 
$$|e^{\mu_i - \mu} (a) | <1, \ \ \ \mbox{ pour } i=2 , \ldots , n, \  a \in A^{1-}.$$
D'apr\`es \eqref{3628} il existe des fonctions $\varphi_1 , \ldots , \varphi_n$, de classe $C^{\infty}$ 
sur un voisinage de $U$ dans $T^1_c$, telles que
\begin{equation} \label{674}
\left[ |\Delta_{\theta} | \Theta_{\theta , \pi} \right] (tma) = \sum_{i=1}^n \varphi_i (m) e^{\mu_i} (a) \ \ \ (m\in U , \ a \in A^{1-} ).
\end{equation} 
L'hypoth\`ese \eqref{668} implique en outre que $\varphi_1$ n'est pas constante \'egale \`a $0$ sur $U$. 
Il existe donc une fonction $\psi$ de classe $C^{\infty}$ sur $T_c^1$, et de support dans
$U$, telle que
$$\int_{T_c^1} \varphi_1 \psi dm =1.$$
Pour $a\in A^{1-}$, on pose
$$\Psi (a) = e^{-\mu} (a) \int_{m \in T_c^1} \left[ |\Delta_{\theta} | \Theta_{\theta , \pi} \right] (tma) \psi (m)
dm .$$
D'apr\`es \eqref{674}, on peut \'ecrire $\Psi$ comme une somme d'exponentielles~:
\begin{equation} \label{676}
\Psi = 1 + \sum_{i=2}^{n} c_i e^{\mu_i - \mu}.
\end{equation}
D'un autre c\^ot\'e, il d\'ecoule du \S \ref{xjhj} qu'il existe des op\'erateurs diff\'erentiels invariants
$X_1, \ldots , X_m$ sur $A^1$, des fonctions continues $h_1 , \ldots , h_m$ sur $A^{1-}$, une forme
lin\'eaire $\tau \in \mathfrak{a}^{1*}$ et des constantes $C_1 , \ldots , C_m$ tels que 
\begin{equation} \label{677}
\begin{split}
\Psi = \sum_{j=1}^{m} X_j h_j , & \mbox{ au sens des distributions}, \ |h_j | \leq C_j e^{\tau} \ (i=1 , \ldots , m), \\
& \mbox{et } \quad e^{\tau} (a) <1 \mbox{ pour } a \in A^{1-}.
\end{split}
\end{equation}
Soit $g$ une fonction $C^{\infty}$ \`a support compact dans $A^{1-}$ telle que 
$\smallint_{A^1} g da =1$ et soient $g_1 , g_2 , \ldots$ les translat\'es de $g$ par une suite de points
$a_1 , a_2 , \ldots$ dont les inverses tendent vers l'infini dans $A^{1-}$. Puisque les exponentielles
$e^{\mu_i -\mu}$ ($i=2 , \ldots , n$) d\'ecroissent le long de $A^{1-}$, il d\'ecoule de \eqref{676} que
$\smallint_{A^1} g_k \Psi da$ tend vers $\smallint_{A^1} f da=1$ lorsque $k$ tend vers l'infini. 
De m\^eme \eqref{677} implique~:
$$\int_{A^1} g_k \Psi da = \sum_{j=1}^m \int_{A^1} (X_j^* g_k) h_j da \rightarrow 0.$$
Ce qui fournit la contradiction recherch\'ee.

\section{Paquets d'Arthur pour les groupes complexes}

Soit $N$ un entier sup\'erieur ou \'egal \`a $1$. Consid\'erons un groupe 
$H$ \'egal \`a $\SO (2 \ell +1 , \CC)$ ou $\SO (2\ell , \CC)$ si $N = 2\ell$ est pair et \'egal \`a $\Sp (2\ell )$  si $N=2\ell +1$ est impair. Rappelons que  $H$ est un sous-groupe endoscopique de $G^+$
o\`u $G = \GL (N , \CC)$ et $\theta$ est l'automorphisme involutif $g \mapsto J {}^t \- g^{-1} J^{-1}$. Le
groupe dual $\widehat{H}$ est \'egal \`a $\Sp (N , \CC)$ ou $\SO (N , \CC)$, naturellement plong\'e dans 
$\GL (N , \CC)$.

Soit 
$$\psi :  \CC^* \times \SL (2 , \CC)  \rightarrow \widehat{H}$$
un param\`etre d'Arthur pour $H$. On suppose de plus que $\psi_{| \CC^*}$ est {\it temp\'er\'ee}, {\it i.e.}
unitaire. D'apr\`es la conjecture de Ramanujan, cela devrait \^etre toujours le cas. Il n'est, en outre, 
pas difficile d'\'etendre nos r\'esultats aux param\`etres \og g\'en\'eralis\'es \fg \ o\`u $\psi_{| \CC^*}$
n'est plus n\'ecessairement unitaire mais contr\^ol\'e par l'approximation de la conjecture de Ramanujan
d\'emontr\'ee par Luo, Rudnick et Sarnak.

\subsection{} Rappelons la d\'efinition du paquet $\small\prod (\psi)$. 
Il y a une notion naturelle de fonctions associ\'ees $(\varphi , f )$, $\varphi \in C_c^{\infty} (G)$, 
$f \in C_c^{\infty} (H)$.~\footnote{On peut en fait consid\'erer seulement des fonctions {\it $K$-finies} des deux c\^ot\'es.} Lorsque $N$ est pair et $H=\SO (N , \CC)$ il faut en outre supposer $f$ invariante par 
un automorphisme ext\'erieur $\alpha$ de $H$; on suppose $\alpha^2 =1$. 

On \'ecrit le param\`etre $\psi$ sous la forme~:
$$\psi = \chi_1 \otimes R_{a_1} \oplus \ldots \oplus \chi_m \otimes R_{a_m} \subset \GL (N , \CC),$$
o\`u chaque $\chi_j$ est un caract\`ere unitaire de $\CC^*$ que l'on \'ecrit 
$z \mapsto z^{p_j} \bar z ^{q_j}$ avec $\mathrm{Re} (p_j + q_j) = 0$. 
Noter que puisque l'image de $\psi$ est contenue dans $\widehat{H}$, le
param\`etre $\psi$ est $\theta$-stable. On en d\'eduit que soit $\chi_j$ est quadratique, soit il existe
$k$ telle que $a_k = a_j$ et $\chi_k = \chi_j^{-1}$. On associe au param\`etre $\psi$ la 
repr\'esentation de $\GL(N , \CC)$~: 
\begin{equation} \label{Pi}
\Pi = \Pi_{\psi} =  \mathrm{ind} (  \chi_1 \circ \det \otimes \ldots \otimes \chi_m \circ \det )
\end{equation}
(induction unitaire \`a partir du parabolique $(a_1 , \ldots , a_m)$). Elle est irr\'eductible d'apr\`es Vogan \cite{VoganU} ou Bernstein \cite{Bernstein} et Baruch \cite{Baruch}. Elle est par ailleurs $\theta$-stable; 
fixons $A_{\theta} : \Pi \rightarrow \Pi$ un entrelacement tel que $A_{\theta}^2 = 1$. 

(Lorsque $N$ est pair et $H=\SO (N , \CC)$ on identifie deux repr\'esentations irr\'eductibles de $G$
conjugu\'ees par $\alpha$. On d\'esigne simplement par $\pi$ la classe d'\'equivalence. Alors 
$\mathrm{trace} \ \pi (f)$ est bien d\'efinie pour $f$ restreinte comme expliqu\'e ci-dessus.)

Le r\'esultat suivant est alors le th\'eor\`eme 30.1 d'Arthur \cite[\S 30]{Arthur}.

\begin{thm}[Arthur] \label{thm:Arthur}
Il existe une famille finie $\small\prod (\psi)$ de repr\'esentations de $H$, et des multiplicit\'es $m(\pi) >0$ ($\pi \in \small\prod (\psi)$) telles que, pour $\varphi$ et $f$ associ\'ees~:
\begin{equation} \label{TrIdent}
\mathrm{trace} \left( \Pi (\varphi ) A_{\theta} \right) = \sum_{\pi \in \small\prod (\psi)} \varepsilon (\pi ) m (\pi) \mathrm{trace} \pi (f),
\end{equation}
o\`u chaque $\varepsilon (\pi)$ est un signe $\in \{ \pm 1\}$. 
\end{thm}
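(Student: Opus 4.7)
The plan is to derive the identity by comparison of trace formulas: the $\theta$-twisted trace formula for $G = \GL(N,\CC)$ (seen as a global trace formula specialized to the archimedean place) and the (stable) trace formula for the endoscopic group $H$. The twisted character $\mathrm{trace}(\Pi(\varphi)A_\theta)$ sits on the spectral side of the twisted trace formula attached to the discrete automorphic contribution of $\Pi_\psi$; the sum $\sum_\pi m(\pi) \, \mathrm{trace}\,\pi(f)$ is meant to isolate, on the stable side for $H$, the spectral contribution attached to the global Arthur parameter that localizes to $\psi$. So the statement is, in essence, the specialization to $\CC$ of the spectral identity obtained once the two trace formulas have been aligned.

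First I would set up transfer: define the notion of associated functions $(\varphi,f)$ at $\CC$ by requiring equality of (twisted) orbital integrals via the norm map $\mathcal{N}$ of \S\ref{sec:Norme}, with the appropriate Kottwitz--Shelstad transfer factor. The needed archimedean transfer is due to Shelstad (and Renard in the twisted case); over $\CC$ the situation simplifies since every semisimple conjugacy class in $H$ transfers, and the factor $|\Delta_\theta|$ of \S\ref{1.4} together with the Weyl denominator $D_E$ controls the comparison (cf.\ the identity \eqref{DenEx} exhibiting $D^\theta_{\uu^+}$ as $D_E\circ \mathcal{N}$). Next I would invoke the fundamental lemma of Ngô and its weighted form (Chaudouard--Laumon) to globalize: embed $\psi$ into a global parameter whose localization elsewhere is unramified and stable, so that the global trace formula comparison reduces the identity at $\CC$ to an equality of a single pair of archimedean spectral distributions.

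Having the global twisted and stable trace formulas equal on associated functions, one isolates, by linear independence of characters and $Z(\g_\CC)$-eigenspace decomposition, the contribution of parameters with the prescribed infinitesimal character $\chi_{\lambda}=\chi_{\lambda^1}\circ \mathrm{N}$. On the $\GL(N)$ side this contribution is precisely $\mathrm{trace}(\Pi(\varphi)A_\theta)$, because the induced representation $\Pi_\psi$ is irreducible (Vogan, Bernstein--Baruch) and $\theta$-stable. On the $H$ side one reads off a finite sum $\sum_\pi c_\pi \mathrm{trace}\,\pi(f)$, where $\pi$ runs over irreducible representations sharing the infinitesimal character dictated by $\psi$. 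This forces the existence of the finite packet $\small\prod(\psi)$, the signs $\varepsilon(\pi)\in\{\pm1\}$ (coming from the normalization of the intertwining operator $A_\theta$ relative to Whittaker data) and the positive multiplicities $m(\pi)$ (from the spectral side). The case $H=\SO(2\ell,\CC)$ requires the extra averaging over $\alpha$, handled by restricting $f$ to be $\alpha$-invariant as in the statement.

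The principal obstacle is clearly the archimedean transfer/fundamental lemma package together with the globalization step: producing enough test functions $f$ with prescribed archimedean orbital integrals and controlled behaviour elsewhere to separate the single spectral contribution attached to $\psi$. Over $\CC$ the endoscopic transfer is explicit, so the hard work is really in the global trace formula comparison and in proving that the coefficients one obtains are positive integers (so that $\small\prod(\psi)$ is truly a finite multiset of representations), which is where Arthur's induction on $N$ and the use of twisted endoscopic transfer for general linear groups are indispensable.
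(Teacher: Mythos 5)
The paper does not prove this statement: it is explicitly attributed to Arthur and quoted verbatim, with the sentence \emph{``Le r\'esultat suivant est alors le th\'eor\`eme 30.1 d'Arthur''} immediately preceding it, so there is no in-paper proof to compare against. Your sketch is a reasonable high-level description of Arthur's own strategy (stabilization of the twisted trace formula for $\GL(N)$, endoscopic transfer and the fundamental lemma, globalization of the archimedean parameter, then isolation of a single spectral contribution by infinitesimal character), and it correctly identifies the genuine difficulties (transfer, positivity and integrality of the coefficients, the induction on $N$). But you should be aware that this is an external input for the authors, not something they establish; what they \emph{do} use from it downstream is only the character identity \eqref{TrIdent} together with the fact that $\varepsilon(\pi)$ can be taken constant on the packet after suitable normalization, and those are the precise forms they extract in the discussion following the theorem. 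One small caveat in your sketch: the step where you ``isolate by linear independence of characters and $Z(\mathfrak{g}_{\CC})$-eigenspace decomposition'' and read off a finite packet with positive integer multiplicities is considerably more delicate than stated; it is precisely the hard part of Arthur's book-length argument, requiring his multiplicity formula and a simultaneous induction with the local classification, not a one-line separation. As a proof it is therefore a plan rather than an argument, but as a recapitulation of the cited result it is in the right spirit.
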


On pourrait aussi -- comme Arthur le fait -- d\'efinir $\small\prod (\psi)$ comme un ensemble de repr\'esentations-avec-multiplicit\'es. L'\'egalit\'e \eqref{TrIdent} d\'etermine uniquement cet ensemble ainsi 
que les signes $\varepsilon (\pi)$. Arthur d\'etermine d'ailleurs ces signes pour un choix convenable
de $A_{\theta}$ (normalisation par le mod\`ele de Whittaker).  Il montre en particulier que $\varepsilon (\pi)$ est constant sur le paquet 
$\small\prod (\psi)$ d\`es que l'\'el\'ement 
$$s_{\psi} =  \psi \left( 1 , \left(
\begin{array}{cc}
-1 & 0 \\
0 & -1 
\end{array} \right) \right)$$
est central dans $\widehat{G}$, voir \cite[p. 246, p. 242]{Arthur}. Noter que ceci se produit en particulier
si pour tout $j= 1 , \ldots , m$, les entiers $a_j$ sont de m\^eme parit\'e. En g\'en\'eral on associe \`a 
$s_{\psi} \in \widehat{H}$ le groupe (complexe) $E$ de groupe dual $\widehat{E}$ \'egal au centralisateur de $s_{\psi}$ dans $\widehat{H}$. Le groupe $E$ appartient \`a $\mathcal{E}_{\rm ell} (H)$ -- l'ensemble des sous-groupes endoscopiques 
elliptiques de $H$. 

Si $N=2\ell$ et $H= \SO(2\ell +1 , \CC)$, l'ensemble $\mathcal{E}_{\rm ell} (H)$ est param\'etr\'e par les couples d'entiers pairs $(N' , N'')$ avec $N'' \geq N' \geq 0$ et $N=N'+N''$. 
Le groupe endoscopique correspondant est le groupe 
$$H' \times H'' = \SO (N'+1 , \CC ) \times \SO (N'' +1 , \CC).$$
Notons alors que $\widehat{H} ' \times \widehat{H}''= \Sp (N' , \CC ) \times \Sp (N'' , \CC)$. Dans ce cas on pose
$$G' \times G'' = \GL (N' , \CC ) \times \GL (N'' , \CC).$$

Si $N=2\ell$ et $H= \SO(2\ell , \CC)$, l'ensemble $\mathcal{E}_{\rm ell} (H)$ est param\'etr\'e par les couples d'entiers pairs $(N' , N'')$ avec $N'' \geq N' \geq 0$ et $N=N'+N''$. 
Le groupe endoscopique correspondant est le groupe 
$$H' \times H'' = \SO (N', \CC ) \times \SO (N'' , \CC).$$
Notons alors que $\widehat{H} ' = \SO (N' , \CC ) \times \SO (N'' , \CC)$. Dans ce cas on pose encore
$$G' \times G''= \GL (N' , \CC ) \times \GL (N'' , \CC).$$

Si $N=2\ell+1$ et $H= \Sp(2\ell  , \CC)$, l'ensemble $\mathcal{E}_{\rm ell} (H)$ est param\'etr\'e par les couples d'entiers pairs $(N' , N'')$ avec $N'' , N' \geq 0$ et $N=N'+(N''+1)$. 
Le groupe endoscopique correspondant est le groupe 
$$H' \times H''= \SO (N' , \CC ) \times \Sp (N''  , \CC).$$
Notons alors que $\widehat{H} ' \times \widehat{H}'' = \SO (N' , \CC ) \times \SO (N''+1 , \CC)$. Dans ce cas on pose
$$G' \times G''= \GL (N' , \CC ) \times \GL (N'' +1 , \CC).$$

Dans tous les cas la param\`etre $\psi$ se factorise \`a travers $\widehat{H}'\times \widehat{H}''$ et on note 
$$(\psi' , \psi '') : \CC^* \times \SL (2 , \CC) \rightarrow \widehat{H}' \times \widehat{H}''$$ 
le param\`etre correspondant.

Il y a encore une notion naturelle de fonctions associ\'ees~:
$$\begin{array}{ccc}
\varphi \in C_c^{\infty} (G) & \longleftrightarrow & f \in C_c^{\infty} (H) \\
\updownarrow & & \updownarrow \\
(\varphi' , \varphi '') \in C_c^{\infty} (G' \times G'') & \longleftrightarrow & (f ' , f'' ) \in C_c^{\infty} (H' \times H'')
\end{array}
$$

Soit $\Pi' \otimes \Pi ''= \Pi_{\psi '} \otimes \Pi_{\psi ''}$ la repr\'esentation de $G' \times G''$ associ\'ee au param\`etre $(\psi ' , \psi'')$. Le th\'eor\`eme 30.1 d'Arthur \cite[\S 30]{Arthur} implique~:
\begin{equation} \label{TrIdent2}
\mathrm{trace} \left( (\Pi' (\varphi' ) \otimes \Pi '' (\varphi '')) A_{\theta} \right) = \varepsilon \sum_{\pi \in \small\prod (\psi)} m (\pi) \mathrm{trace} \ \pi (f),
\end{equation}
o\`u $\varepsilon$ est un signe qu'Arthur d\'etermine explicitement pour un choix convenable de $A_{\theta}$. 

\subsection{} \label{par:6.3} Soit $T_H=(\CC^* )^{\ell}$ le tore maximal diagonal dans $H$. 
On a $T_H=T_{H,c} A_H$, o\`u $A_H =(\RR^*_+ )^{\ell}$ est d\'eploy\'e sur $\RR$ et $T_{H,c}$ est compact. 
Un {\it exposant} de $H$
est un caract\`ere de $A_H$, c'est-\`a-dire un \'el\'ement de $\mathrm{Hom}(\RR^{\ell} , \CC) = \CC^{\ell}$.

La paire usuelle $(\B , \T)$ de $\G$ d\'etermine un syst\`eme de racines positives de $(T_H , H)$. 
On \'ecrit, pour deux exposants $e$, $e'$ de $H$, $e\leq_H e'$ si 
$$e' = e + \sum_{\alpha} n_{\alpha} \alpha \ \ \  (n_{\alpha} \geq 0 )$$
o\`u $\alpha$ d\'ecrit les racines simples et $e$ et $e'$ sont vus comme des formes lin\'eaires complexes
sur $\RR^{\ell}$. Lorsque $H = \SO (2\ell +1 , \CC)$ ou $\Sp (2\ell , \CC)$ les ordres ainsi obtenus
sont d\'ecrits dans l'exemple \ref{section:exp}. 
Lorsque $H = \SO (2\ell , \CC)$ un calcul simple donne que pour deux exposants $e$, $e'$ l'ordre 
$\leq_H$ s'exprime par $e'-e= (x_i)_{i=1 , \ldots , \ell}$ avec 
\begin{equation} \label{ordre2}
\left\{
\begin{array}{l}
x_1 + \ldots + x_i \in \N \ \ \ (1 \leq i \leq \ell -2)  \\
x_1 + \ldots + x_{\ell} \in 2\N, \\
x_1 + \ldots + x_{\ell-1} - x_{\ell} \in 2\N.
\end{array}
\right.
\end{equation}

\subsection{} Lorsque $H = \SO (N+1 , \CC)$, si $N$ est pair, et 
$H= \Sp (N-1, \CC)$, si $N$ est impair, on a introduit une application naturelle bijective $\mathcal{A}_{H/G}$ -- not\'ee $\mathcal{A}$ dans l'exemple \ref{sec:Norme} -- des classes de conjugaison
dans $H$ vers les classes de $\theta$-conjugaison dans $G$.

Lorsque $H= \SO (N, \CC)$, avec $N$ pair, les classes de conjugaison semi-simples de $H$ sont repr\'esent\'ees par 
\begin{equation}
t  = \mathrm{diag} (x_1 , \ldots , x_{\ell} , x_{\ell}^{-1} , \ldots , x_{1}^{-1}) \in T_H
\end{equation}
modulo le groupe de Weyl de $W_H = \mathfrak{S}_{\ell} \rtimes \{\pm 1 \}^{\ell-1}$, o\`u $\{-1\}^{\ell -1}$ est le sous-groupe de $\{-1\}^{\ell}$ d\'efini par $\prod s_i =1$.
Les classes de $\theta$-conjugaison semi-simples de $G$ sont quant \`a elles repr\'esent\'ees 
par les \'el\'ements 
\begin{equation}
\tilde{t} = \mathrm{diag} (s , 1 )  \in T  \  (s \in (\CC^* )^{\ell })
\end{equation}
modulo $W$. On a alors $\tilde{t} = \mathcal{A}_{H/G} (t)$ si $s= (x_1 , \ldots , x_{\ell})$. En sens 
inverse on \'ecrira $t = \mathcal{N}_{H/G} (\tilde{t})$. Noter que dans ce cas (o\`u $H=\SO(2\ell , \CC)$) 
un \'el\'ement $\theta$-r\'egulier $\tilde{t}$ de $G$ a deux ant\'ec\'edents par $\mathcal{A}_{H/G}$.

Par d\'efinition un $\theta$-exposant de $G$ est un caract\`ere du tore maximal d\'eploy\'e $A$. 
Il d\'efinit un caract\`ere de $A_H$ par composition avec l'application norme. 

On v\'erifie alors aussit\^ot \`a l'aide de \eqref{ordre}, \eqref{ordre3} et \eqref{ordre2} que si $e$ et $e'$ sont deux 
$\theta$-exposants de $G$ tels que $e'\leq_H e$ alors $e' \leq_{\theta} e$.

Dans cette section, nous d\'emontrons la proposition suivante.

\begin{prop} \label{prop:exp}
Soit $\pi$ une repr\'esentation arbitraire de $\small\prod (\psi )$ et $e$ un exposant minimal de $\pi$. 
Alors, il existe un $\theta$-exposant d'homologie $e'$ (resp. $e''$) de $\Pi '$ (resp. $\Pi ''$) tel que 
$$e' + e'' \leq_{H} e.$$
\end{prop}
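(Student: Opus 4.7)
Le plan est de comparer les d\'eveloppements asymptotiques des deux membres de l'identit\'e de traces d'Arthur \eqref{TrIdent2} sur les \'el\'ements $\theta$-r\'eguliers contractants du tore $T^1$ (identifi\'e au tore diagonal $T_H$ de $H$ via l'application norme $\mathcal{N}_{H/G}$), en utilisant d'un c\^ot\'e la formule d'Osborne tordue (Th\'eor\`eme \ref{Osborne}) appliqu\'ee \`a $\Pi' \otimes \Pi''$ et de l'autre la formule d'Osborne classique de Hecht--Schmid \cite{HechtSchmid} appliqu\'ee \`a chacun des $\pi \in \small\prod(\psi)$.

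Je commencerai par traduire l'identit\'e \eqref{TrIdent2} en une \'egalit\'e de fonctions sur les \'el\'ements $\theta$-r\'eguliers de $T^1$~: en utilisant la formule d'int\'egration de Weyl et la bijection $\mathcal{A}_{H/G}$ entre classes de conjugaison semisimples de $H$ et classes de $\theta$-conjugaison $\theta$-semisimples de $G' \times G''$, on obtient l'\'egalit\'e
\[
|\Delta_\theta(t)|\, \Theta_{\Pi'\otimes\Pi'',\theta}(t) = \varepsilon \sum_{\pi \in \small\prod(\psi)} \varepsilon(\pi)\, m(\pi) \, |\Delta_H(\mathcal{N}(t))|\, \Theta_\pi(\mathcal{N}(t)).
\]
Sur un petit ouvert de $\mathfrak{a}^{1-}$, le Th\'eor\`eme \ref{Osborne} d\'ecrit le premier membre comme somme finie d'exponentielles $e^{(e'+e'' + \rho_P)(H)}$ index\'ee par les couples $(e', e'')$ de $\theta$-exposants d'homologie de $(\Pi', \Pi'')$, et la Proposition \ref{ExpMax2} (appliqu\'ee \`a $\Pi' \otimes \Pi''$ dans le cadre tordu) assure que les couples minimaux pour $\leq_\theta$ proviennent de $H_0(\mathfrak{n}, V'\otimes V'')$. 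De m\^eme, la formule classique de Hecht--Schmid et la Proposition \ref{ExpMax} d\'ecomposent chaque $\Theta_\pi$ en somme finie d'exponentielles $e^{(e + \rho_P^H)(H)}$ index\'ee par les exposants de $\mathfrak{n}_H$-homologie de $\pi$, les minimaux pour $\leq_H$ \'etant des poids de $H_0(\mathfrak{n}_H, V_\pi)$.

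L'\'etape suivante est la comparaison des exposants. L'\'egalit\'e des deux membres fournit une identit\'e entre sommes finies d'exponentielles sur $\mathfrak{a}^{1-}$, donc une co\"{\i}ncidence de leurs ensembles d'exposants pond\'er\'es. Soit maintenant $\pi \in \small\prod(\psi)$ et $e$ un exposant minimal de $\pi$ pour $\leq_H$. Par la Proposition \ref{ExpMax2} appliqu\'ee \`a $H$, $e$ est un poids de $H_0(\mathfrak{n}_H, V_\pi)$ et contribue donc avec un coefficient explicitement non nul au d\'eveloppement de $\Theta_\pi$. Si la somme des contributions des diff\'erents membres du paquet ayant $e$ comme exposant est non nulle, alors $e$ appartient \`a l'ensemble des exposants du premier membre, d'o\`u $e = e' + e''$ pour un couple $(e', e'')$ convenable, et la proposition est d\'emontr\'ee avec \'egalit\'e. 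Dans le cas contraire, une compensation a lieu avec un autre membre $\pi'$ du paquet poss\'edant $e$ comme exposant; ce membre $\pi'$ admet alors n\'ecessairement un exposant minimal $e^{\min}_{\pi'}$ strictement plus petit que $e$ pour $\leq_H$, et on conclut par r\'ecurrence descendante sur l'ensemble fini des exposants apparaissant dans le paquet.

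Le principal obstacle est le contr\^ole des compensations \'eventuelles entre contributions des diff\'erents membres du paquet au second membre. Lorsque $s_\psi$ est central dans $\widehat{G}$, les signes $\varepsilon(\pi)$ sont constants sur le paquet (cf. la remarque apr\`es \eqref{TrIdent}) et la positivit\'e des multiplicit\'es $m(\pi) > 0$ interdit toute compensation, ce qui rend l'argument imm\'ediat; le cas g\'en\'eral requerra un contr\^ole plus fin bas\'e sur la description arthurienne des signes, appliqu\'ee \`a l'initialisation de la r\'ecurrence. Il restera \'egalement \`a v\'erifier la compatibilit\'e des d\'ecalages par $\rho_P$ et $\rho_P^H$ sous l'identification de $T^1$ avec $T_H$, ce qui d\'ecoule du calcul du d\'enominateur de Weyl tordu du \S \ref{1.4} et de la description explicite de $\mathcal{N}_{H/G}$ au \S \ref{par:6.3}.
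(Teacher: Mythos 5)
Your overall plan — translate \eqref{TrIdent2} into a local identity of characters via the norm maps, expand both sides by the twisted Osborne formula (Theorem \ref{Osborne}) on the left and the Hecht--Schmid formula on the right, and then compare minimal exponents using Proposition \ref{ExpMax}/\ref{ExpMax2} with an induction to handle cancellations — is exactly the paper's strategy. The induction idea for the cancellation (an exponent of $H_0(\mathfrak{n}_H,V_\pi)$ can only be killed by an odd-degree contribution from some $V_1$, which then has a strictly smaller minimal exponent) is also the paper's Lemma-free argument. So on the level of structure you have reconstructed the proof.

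However there is a genuine gap, and you have in fact flagged it yourself. In your version of the local identity you wrote a coefficient $\varepsilon\,\varepsilon(\pi)\,m(\pi)$ on the right-hand side, carrying over the per-representation signs $\varepsilon(\pi)$ from \eqref{TrIdent}, and you then observe that when $s_\psi$ is not central one would need ``un contr\^ole plus fin'' of these signs. But this extra $\varepsilon(\pi)$ should not be there: the whole point of passing from $G$ to the endoscopic group $E = H'\times H''$ attached to $s_\psi$ is precisely that \eqref{TrIdent2} (unlike \eqref{TrIdent}) has a \emph{single} global sign $\varepsilon$, independent of $\pi$. This is the content of the discussion following \eqref{TrIdent} and is what makes the identity \eqref{id1} have coefficients $\varepsilon\, m(\pi)$, all of the same sign. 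Consequently no two degree-$0$ contributions can ever cancel; the only possible cancellation is against an odd-degree term, which is exactly what the induction handles. There is no "harder general case'' to treat: the problem you are worried about is eliminated from the start by the choice of \eqref{TrIdent2}. To repair your write-up you should remove the spurious $\varepsilon(\pi)$, cite the uniformity of the sign in \eqref{TrIdent2}, and then your induction closes the argument unconditionally, as in the paper.

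A secondary, more minor point: the passage from \eqref{TrIdent2} to a pointwise character identity is between $G'\times G''$ and $H$ (via $\mathcal{A}_{(H'\times H'')/H}$ composed with the norms $\mathcal{N}_{H'/G'}$, $\mathcal{N}_{H''/G''}$), not directly between $G$ and $H$; and the $\rho$-shifts and transfer factors must be matched against the Weyl denominators as in Lemmas \ref{55}, \ref{den=1} and \ref{den=2} rather than just asserted. You gesture at this (``compatibilit\'e des d\'ecalages par $\rho_P$''), which is fine for a sketch, but a complete proof needs these identities in the precise form giving \eqref{id1}.
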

On a identifi\'e le tore $A_H$ au produit $A_{H'} \times A_{H''}$.

\'Etant donn\'e une repr\'esentation $\pi$ de $H$, de longueur finie, on note 
$V$ son module d'Harish-Chandra. Notons par ailleurs $\n_H$ l'alg\`ebre de 
Lie (complexifi\'ee) du sous-groupe unipotent maximal de $H$ donn\'e par $T_H$ et notre choix de racines, et
notons $\rho_H$ la demi-somme des racines usuelles.

Soit $\Theta_{\pi}$ le caract\`ere d'Harish-Chandra de $\pi$. La \og conjecture d'Osborne non-tordue \fg,
d\'emontr\'ee par Hecht et Schmid, relie $\Theta_{\pi} (t)$ aux traces $\Theta_q (t, V)$ de 
$t\in T_H$ (r\'egulier) dans $H_q (\n_H , V)$ et au d\'enominateur 
$$D_{\n_H} (t) = \det (1-t_{|\n_H}).$$

Pour exploiter ce r\'esultat et sa version \og tordue \fg, il nous faut une relation entre $\Theta_{\pi}$ --
ou plut\^ot $\Theta_{\small\prod (\psi )} = \sum_{\pi \in \small\prod (\psi)} m(\pi ) \Theta_{\pi}$ --
et le caract\`ere tordu de $\Pi ' \otimes \Pi ''$ (pour $A_{\theta}$). 

\subsection{Correspondance de classes de conjugaison entre $H$ et $H' \times H''$} 
Les classes de conjugaison semi-simples de $H$ sont repr\'esent\'ees par 
\begin{equation} \label{eq:classconj}
\begin{split}
& t  = \mathrm{diag} (x_1 , \ldots , x_{\ell} , x_{\ell}^{-1} , \ldots , x_{1}^{-1}) \in T_H \ \ \ \mbox{ si } H= \SO(2\ell , \CC) \mbox{ ou } \Sp(2\ell , \CC) , \\
& t = \mathrm{diag} (x_1 , \ldots , x_{\ell} , 1 , x_{\ell}^{-1} , \ldots , x_{1}^{-1} ) \in T_H \ \ \ \mbox{ si } H=\SO (2\ell +1 , \CC )
\end{split}
\end{equation}
modulo le groupe de Weyl de $H$. Remarquons que le groupe de Weyl de $H$ est \'egal \`a 
$W=\mathfrak{S}_{\ell} \rtimes \{\pm 1 \}^{\ell}$ si $H = \SO (2\ell +1 , \CC)$ ou $\Sp (2\ell , \CC)$ et est \'egal \`a $\mathfrak{S}_{\ell} \rtimes \{\pm 1 \}^{\ell-1}$ si
$H=\SO(2\ell , \CC)$, o\`u $\{-1\}^{\ell -1}$ est le sous-groupe de $\{-1\}^{\ell}$ d\'efini par $\prod s_i =1$.

On repr\'esente de m\^eme les classes de conjugaison semi-simple de $H' \times H''$ par des couples
$(t' , t'')$ o\`u $t '$ (resp. $t''$) est associ\'e \`a $(y_1' , \ldots , y_{\ell '} ')$ (resp. $(y_1 '' , \ldots , y_{\ell ''}'')$ 
comme dans \eqref{eq:classconj}. On note alors $\mathcal{A}_{(H' \times H'' )/H}$ l'application qui \`a la classe de conjugaison semi-simple $(t' , t") \in H' \times H''$ associe la classe de conjugaison de $t \in H$ d\'efinie comme
dans \eqref{eq:classconj} avec $\{x_1 , \ldots , x_{\ell} \} = \{ y_1' , \ldots , y_{\ell '} ' , y_1 '' , \ldots , y_{\ell ''}''\}$. 

On pose 
\begin{equation} \label{Delta1}
\Delta_{H' \times H'', H} ((t', t''), t) = \frac{|\det (\mathrm{Ad} (t) -1)|_{\mathfrak{h} / \mathfrak{t}_H}^{\frac12}}{|\det (\mathrm{Ad} (t') -1)|_{\mathfrak{h'} / \mathfrak{t}_{H'}}^{\frac12} |\det (\mathrm{Ad} (t'') -1)|_{\mathfrak{h''} / \mathfrak{t}_{H''}}^{\frac12}},
\end{equation}
o\`u $t = \mathcal{A}_{(H' \times H'' )/H} (t',t'')$. C'est le facteur de transfert que d\'efinissent 
Langlands et Shelstad \cite{LanglandsShelstad}; seul le facteur $\Delta_{\mathrm{IV}}$ est non trivial, la
cohomologie galoisienne \'etant triviale dans le cas complexe.

\subsection{} On dispose \'egalement des applications naturelles $\mathcal{A}_{H/G}$ (resp. $\mathcal{A}_{H'/G'}$, $\mathcal{A}_{H''/G''}$) des classes de conjugaison
dans $H$ (resp. $H'$, $H''$) vers les 
classes de $\theta$-conjugaison dans $G$ (resp. $G'$, $G''$). 
Et si $\tilde{t} = \mathcal{A}_{H/G} (t)$, on pose~: 
\begin{equation} \label{Delta2}
\Delta_{H , G} (t , \tilde{t}) =  \frac{| \det (\mathrm{Ad} (\tilde{t}) \circ \theta - 1 ) |^{\frac12}_{\mathfrak{g} / \mathfrak{t}}}{|\det ( \mathrm{Ad} (t) - 1 ) |^{\frac12}_{\mathfrak{h}/ \mathfrak{t}_H}} .
\end{equation}
(C'est le facteur $\Delta_{\mathrm{IV}}$ de Kottwitz-Shelstad \cite[p. 46]{KS}.)
Noter que d'apr\`es l'exemple \ref{1.4}, $\Delta_{H,G} (t,\tilde{t})=1$ lorsque $H = \SO (2\ell+1 , \CC)$ ou 
$\Sp (2\ell , \CC)$.

Cela dit, la relation \eqref{TrIdent2} est \'equivalente \`a la 
relation suivante entre le caract\`ere tordu de $\Pi' \otimes \Pi ''$ et les caract\`eres $\Theta_{\pi}$ ($\pi 
\in \prod (\psi )$).

\begin{lem} \label{55}
Pour $(\tilde{t}' , \tilde{t}'')$ et $t$ associ\'es -- soit $t = \mathcal{A}_{(H' \times H'') / H} (t' , t'')$ avec 
$t' = \mathcal{N}_{H'/G'} (\tilde{t}')$ et $t'' = \mathcal{N}_{H''/G''} (\tilde{t}'')$ -- on a~:
$$\Theta_{\Pi ' \otimes \Pi '', \theta} (\tilde{t}', \tilde{t}'') = \varepsilon \frac{\Delta_{H' \times H'', H} ((t', t''), t)}{
\Delta_{H' , G'} (t' , \tilde{t}') \Delta_{H'' , G''} (t'' , \tilde{t}'')} \Theta_{\small\prod (\psi )} (t).$$
\end{lem}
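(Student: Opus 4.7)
La strat\'egie est d'inverser l'identit\'e de traces \eqref{TrIdent2} au moyen de la formule d'int\'egration de Weyl, puis d'invoquer les relations de transfert qui d\'efinissent les paires de fonctions associ\'ees. On \'ecrit le membre de gauche de \eqref{TrIdent2} sous la forme $\int_{G' \times G''} (\varphi' \otimes \varphi'')\, \Theta_{\Pi' \otimes \Pi'', \theta}$ et on applique la formule d'int\'egration de Weyl tordue sur le groupe complexe $G' \times G''$ (qui ne poss\`ede essentiellement qu'une seule classe de paires de Cartan $\theta$-stables)~: elle transforme cette int\'egrale en une int\'egrale sur $T' \times T''$ modulo le groupe de Weyl tordu, avec Jacobien $|\nu_\delta|$. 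De m\^eme, on traite le membre de droite $\varepsilon \int_H f\, \Theta_{\small\prod(\psi)}$ par la formule de Weyl usuelle sur $H$, qui fait appara\^{\i}tre sur $T_H^{\mathrm{reg}}$ le Jacobien $|\det(\mathrm{Ad}(t)-1)|_{\mathfrak{h}/\mathfrak{t}_H}$.

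On invoque ensuite les relations de transfert. Dans le cas complexe, int\'egrales orbitales et int\'egrales orbitales stables co\"{\i}ncident sur le lieu r\'egulier, de sorte que l'association $(\varphi', \varphi'') \leftrightarrow (f', f'')$ de l'endoscopie tordue s'\'ecrit
\begin{equation*}
O^\theta_{(\tilde{t}', \tilde{t}'')}(\varphi', \varphi'') = \Delta_{H', G'}(t', \tilde{t}')\, \Delta_{H'', G''}(t'', \tilde{t}'')\, O_{(t', t'')}(f', f''),
\end{equation*}
tandis que l'association endoscopique ordinaire $(f', f'') \leftrightarrow f$ pour $H' \times H'' \in \mathcal{E}_{\mathrm{ell}}(H)$ s'\'ecrit
\begin{equation*}
O_{(t', t'')}(f', f'') = \Delta_{H' \times H'', H}((t', t''), t)\, O_t(f) .
\end{equation*}
En injectant ces relations et en tirant parti du libre choix de $f$ (\`a la limite concentr\'ee au voisinage d'un \'el\'ement $t$ r\'egulier) ainsi que de la sommabilit\'e locale et de l'analyticit\'e des caract\`eres sur le lieu r\'egulier, on extrait de \eqref{TrIdent2} une \'egalit\'e ponctuelle entre int\'egrandes.

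Il reste \`a v\'erifier que la combinaison des Jacobiens produit exactement le quotient annonc\'e. Compte tenu de la d\'efinition \eqref{Delta1} de $\Delta_{H' \times H'', H}$ et des cardinaux des groupes de Weyl (tordu et ordinaire) introduits par les formules de Weyl, cette simplification se ram\`ene \`a l'identit\'e
\begin{equation*}
|\nu_\delta(t', t'')| = |\det(\mathrm{Ad}(t')-1)|_{\mathfrak{h}'/\mathfrak{t}_{H'}} \cdot |\det(\mathrm{Ad}(t'')-1)|_{\mathfrak{h}''/\mathfrak{t}_{H''}},
\end{equation*}
qui se lit sur le calcul explicite \eqref{DenEx} (exemple \S\ref{1.4}) via l'application norme, ou de mani\`ere \'equivalente sur \eqref{WD}. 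L'obstacle principal n'est donc pas conceptuel mais r\'eside dans la gestion soigneuse de toutes les normalisations -- mesures, cardinaux des groupes de Weyl, facteurs de transfert -- que nous menerons en suivant les conventions de Kottwitz-Shelstad \cite{KS}.
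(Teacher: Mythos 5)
Votre proposition suit essentiellement la même voie que l'article : celui-ci ne donne d'ailleurs aucune démonstration de ce lemme, qu'il présente simplement comme une reformulation de l'identité de traces \eqref{TrIdent2} (``la relation \eqref{TrIdent2} est équivalente à la relation suivante''), et vous explicitez correctement ce que cette équivalence signifie -- formule d'intégration de Weyl sur chaque membre, transfert des intégrales orbitales (triviales du côté stabilité en cas complexe, seul le facteur $\Delta_{\mathrm{IV}}$ survivant), puis extraction ponctuelle par variation de $f$ au voisinage d'un élément régulier, en s'appuyant sur la sommabilité locale et l'analyticité des caractères.

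Deux réserves mineures, qui ne remettent pas en cause l'approche mais méritent d'être signalées. D'une part, l'identité de Jacobiens que vous avancez,
\begin{equation*}
|\nu_\delta(\tilde{t}',\tilde{t}'')| = |\det(\mathrm{Ad}(t')-1)|_{\mathfrak{h}'/\mathfrak{t}_{H'}} \cdot |\det(\mathrm{Ad}(t'')-1)|_{\mathfrak{h}''/\mathfrak{t}_{H''}},
\end{equation*}
n'est exacte qu'à une constante multiplicative près, constante qui dépend des cardinaux $|W^{\theta}_{G'\times G''}|$, $|W_H|$, et des degrés de recouvrement des applications $\mathcal{A}_{H/G}$, $\mathcal{A}_{(H'\times H'')/H}$ (le cas $H=\SO(2\ell,\CC)$ introduit un facteur $2$, cf.\ \S\ref{par:6.3}) ; elle ne saurait donc être une égalité stricte sans que l'on ait au préalable fixé soigneusement les normalisations des mesures et des intégrales orbitales. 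D'autre part, vous reportez précisément cette ``gestion soigneuse des normalisations'' à plus tard sans l'effectuer ; c'est le seul endroit où la démonstration reste un squelette plutôt qu'un argument complet, mais c'est exactement ce que l'article lui-même passe sous silence.
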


Nous pouvons maintenant comparer les expressions donn\'ees par les versions tordues et non tordues
de la \og conjecture d'Osborne \fg.

Rappelons que $H_0 (\mathfrak{n}_H , V)$ est li\'e par la r\'eciprocit\'e de Frobenius aux homomorphismes
de $V$ vers les induites \cite[\S 4]{HechtSchmid}. On a
$${\rm Hom}_H \left( H_0 (\mathfrak{n}_H , V) , \CC_{\chi} \right) = {\rm Hom}_H \left( V , J_{\chi} \right)$$
pour $\chi$ un caract\`ere de $T_H$, $J_{\chi}$ \'etant l'induite non normalis\'ee. On en d\'eduit
$${\rm Hom}_H \left( H_0 (\mathfrak{n}_H , V) e^{-\rho_H} , \CC_{\chi} \right) = {\rm Hom}_H \left( V , I_{\chi} \right)$$
o\`u $I_{\chi}$ est l'induite {\it unitaire}. Comme on l'a vu, les calculs d'exposants s'expriment 
plus naturellement dans ce cadre, cf. \cite[p. 51]{HechtSchmid}.
Le caract\`ere de $H_q (\mathfrak{n}_H , V) e^{-\rho_H}$ est \'evidemment 
$\Theta_q ( \cdot , V ) e^{-\rho_H}$. Les
m\^emes consid\'erations s'appliquent \`a $G$. 

Les deux lemmes suivants sont laiss\'es au lecteur (pour des calculs analogues, cf. exemple \ref{1.4})~:

\begin{lem} \label{den=1}
Pour $\tilde{t}$ et $t$ r\'eguliers et associ\'es, on a~:
\begin{equation*} 
\Delta_{H,G} (t , \tilde{t})  =  \frac{e^{-\rho} (\tilde{t}) D_{\mathfrak{n}}^{\theta} (\tilde{t})}{e^{-\rho_H} (t ) D_{\mathfrak{n}_H} (t )}.
\end{equation*}
\end{lem}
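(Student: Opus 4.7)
\medskip

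\noindent
\textbf{Plan de preuve.}
L'identit\'e se v\'erifie par calcul direct, en exprimant les deux membres comme produits sur les racines et en se ramenant aux formules explicites d\'ej\`a \'etablies dans le texte.

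\emph{Premi\`ere \'etape.} Dans le contexte pr\'esent, $G = \GL(N,\CC)$ et $P = MAN$ est un parabolique minimal $\theta$-stable avec $T$ maximalement d\'eploy\'e. Un examen direct des racines (suivant l'exemple du \S \ref{2.1}) montre qu'il n'y a pas de racine restreinte imaginaire non nulle : toute racine restreinte non nulle est non-imaginaire. Par cons\'equent $\n$ co\"\i ncide avec $\uu^+$ et
$$e^{-\rho}(\tilde t)\, D_{\n}^{\theta}(\tilde t) \;=\; \Delta_{\theta}(\tilde t),$$
le d\'enominateur de Weyl tordu d\'efini au \S \ref{1.4}. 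De m\^eme, $e^{-\rho_H}(t)\, D_{\n_H}(t)$ est le d\'enominateur de Weyl (non tordu) usuel de $H$, que nous notons $D_H(t)$.

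\emph{Deuxi\`eme \'etape.} L'exemple du \S \ref{1.4}, combin\'e au calcul explicite \eqref{DenEx}, fournit
$$\Delta_{\theta}(\tilde t) \;=\; D_E(\mathcal{N}(\tilde t)),$$
o\`u $E$ est le groupe endoscopique de $(G,\theta)$ (soit $\SO(2\ell+1,\CC)$ si $N=2\ell$, soit $\Sp(2\ell,\CC)$ si $N=2\ell+1$), et $D_E$ est son d\'enominateur de Weyl $\prod_{\alpha>0}(e^{\alpha/2}-e^{-\alpha/2})$. Le membre de droite de l'identit\'e cherch\'ee s'\'ecrit donc
$$\frac{D_E(\mathcal{N}(\tilde t))}{D_H(t)}.$$

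\emph{Troisi\`eme \'etape.} On traite les trois cas pour $H$. Lorsque $H=E$ (c'est-\`a-dire $H=\SO(2\ell+1,\CC)$ si $N=2\ell$, ou $H=\Sp(2\ell,\CC)$ si $N=2\ell+1$), on a $\mathcal{N}(\tilde t) = t$ par d\'efinition de l'application norme, donc le quotient vaut $1$; ceci co\"\i ncide avec la remarque d\'ej\`a faite dans l'exemple du \S \ref{1.4} que $\Delta_{H,G}(t,\tilde t)=1$. Le cas restant est $H=\SO(2\ell,\CC)$ avec $N=2\ell$ pair. Dans cette situation $\Delta_{H,G}$ est non trivial et se calcule directement \`a partir de \eqref{Delta2} : comme $\mathfrak{h}/\mathfrak{t}_H$ s'identifie au sous-espace des racines de $\SO(2\ell)$, tandis que $\mathfrak{g}/\mathfrak{t}$ (pour $G=\GL(2\ell,\CC)$ vu via $\theta$) fournit, apr\`es l'action de $\mathrm{Ad}(\tilde t)\circ\theta-1$, les facteurs correspondant aux racines de $\SO(2\ell+1,\CC)$ \'evalu\'es en $\mathcal{N}(\tilde t)$. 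Le quotient de ces deux produits est pr\'ecis\'ement $D_{\SO(2\ell+1)}(\mathcal{N}(\tilde t))/D_{\SO(2\ell)}(t)$.

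\emph{Principal obstacle.} Le point d\'elicat est le cas $H=\SO(2\ell,\CC)$, o\`u $H$ n'est pas le groupe endoscopique de $(G,\theta)$ au sens de l'exemple du \S \ref{1.4}. La v\'erification demande une comparaison ligne \`a ligne des racines de $\SO(2\ell)$ et de $\SO(2\ell+1)$ (elles ne diff\`erent que par les racines \og courtes \fg \ $\pm x_i$) et un contr\^ole des signes apparaissant dans $\Delta_{\mathrm{IV}}$. L'identit\'e se r\'eduit alors \`a l'\'egalit\'e \'el\'ementaire
$$\prod_{i=1}^{\ell}(1-x_i^2) \;=\; \frac{D_{\SO(2\ell+1)}(x_1,\ldots,x_\ell)}{D_{\SO(2\ell)}(x_1,\ldots,x_\ell)},$$
\'evalu\'ee aux coordonn\'ees de $\mathcal{N}(\tilde t)$, qu'on v\'erifie directement \`a partir des syst\`emes de racines explicites.
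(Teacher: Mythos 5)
Le papier ne donne pas de preuve de ce lemme : il est explicitement «~laiss\'e au lecteur~» avec un renvoi aux calculs de l'exemple du \S \ref{1.4}. Votre plan g\'en\'eral -- ramener les deux membres \`a des d\'enominateurs de Weyl en utilisant cet exemple et distinguer le cas $H=E$ du cas $H=\SO(2\ell,\CC)$ -- est donc coh\'erent avec l'esprit du texte, et les deux premi\`eres \'etapes sont correctes (en particulier l'absence de racines restreintes imaginaires non nulles, d'o\`u $\n=\uu^+$ et $\rho=\rho_P$).

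Il y a cependant une erreur concr\`ete dans l'\og identit\'e \'el\'ementaire\fg\ que vous \'enoncez \`a la fin. Comme $D_{\SO(2\ell+1)}$ et $D_{\SO(2\ell)}$ ne diff\`erent que par les racines courtes $e_i$ de $B_\ell$, on a
$$
\frac{D_{\SO(2\ell+1)}(x_1,\dots,x_\ell)}{D_{\SO(2\ell)}(x_1,\dots,x_\ell)}
= \prod_{i=1}^{\ell}\bigl(x_i^{1/2}-x_i^{-1/2}\bigr),
$$
et non $\prod_{i}(1-x_i^2)$. (Le terme $\prod_i(1-t_i^2)$ de \eqref{DenEx} est une partie de $D_{\uu^+}^{\theta}(t)$ pour $t\in T^1$ de coordonn\'ees $t_i$ ; apr\`es multiplication par $e^{-\rho}(t)$ et passage aux coordonn\'ees $x_i=t_i^2$ de $\mathcal{N}(t)$, il devient bien $\prod_i(x_i^{1/2}-x_i^{-1/2})$. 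Vous semblez avoir m\'elang\'e les coordonn\'ees de $\tilde t$ dans $T^1$ et celles de $\mathcal{N}(\tilde t)$ dans $T_E$.)

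Par ailleurs, la troisi\`eme \'etape affirme que $\Delta_{H,G}(t,\tilde t)$ \'egale $D_{\SO(2\ell+1)}(\mathcal{N}(\tilde t))/D_{\SO(2\ell)}(t)$ \og par un calcul direct \`a partir de \eqref{Delta2}\fg, mais ce calcul n'est pas fait : comme $\Delta_{H,G}$ est d\'efini par des valeurs absolues sur les alg\`ebres de Lie \emph{r\'eelles} de groupes complexes (donc de dimension doubl\'ee, les valeurs propres venant par paires conjugu\'ees), il faut expliquer pourquoi $|\det(\cdot)|^{1/2}$ redonne la valeur absolue du produit holomorphe, puis utiliser \eqref{WD} pour absorber la constante $\det(1-\theta)_{|\mathfrak{t}/\mathfrak{t}^1}$. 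C'est ce calcul qui doit produire $\prod_i|x_i^{1/2}-x_i^{-1/2}|$ et non $\prod_i(1-x_i^2)$. La structure de votre argument est bonne ; c'est le contenu num\'erique du \og principal obstacle\fg\ qu'il faut reprendre.
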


\begin{lem} \label{den=2}
Pour $t$ et $(t',t'')$ r\'eguliers et associ\'es, on a~:
\begin{equation*} 
\Delta_{H' \times H'',H} ((t', t'') , t)  =  \frac{e^{-\rho_H} (t) D_{\mathfrak{n}_H} (t)}{e^{-\rho_{H'}} (t' ) D_{\mathfrak{n}_{H'}} (t' ) e^{-\rho_{H''}} (t'' ) D_{\mathfrak{n}_{H''}} (t'' )}.
\end{equation*}
\end{lem}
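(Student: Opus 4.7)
The plan is to unwind the definition of $\Delta_{H'\times H'',H}$ from \eqref{Delta1} and to apply separately to $H$, $H'$ and $H''$ the classical identity that rewrites the discriminant
$|\det(1-\mathrm{Ad}(t))|^{1/2}_{\mathfrak h/\mathfrak t_H}$ in terms of the (untwisted) Weyl denominator, and then to take the ratio. This is the exact analogue, for the untwisted endoscopy $H'\times H''\leadsto H$, of the computation carried out in the example of \S\ref{1.4} for the twisted case, and parallels closely the proof indicated for Lemma~\ref{den=1}.

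The key identity is the following: for any connected reductive complex group $H$, maximal torus $T_H$, positive nilpotent subalgebra $\mathfrak n_H$ with positive root system $R_H^+$, and $t\in T_H$ regular, one has
\[\det(1-\mathrm{Ad}(t))_{|\mathfrak h/\mathfrak t_H}=\prod_{\alpha\in R_H^+}(1-e^\alpha(t))(1-e^{-\alpha}(t))=(-1)^{|R_H^+|}\prod_{\alpha\in R_H^+}(e^{\alpha/2}(t)-e^{-\alpha/2}(t))^2,\]
the last equality using $(1-u)(1-u^{-1})=-(u^{1/2}-u^{-1/2})^2$. Taking absolute values and square roots, and then using $e^{\alpha/2}-e^{-\alpha/2}=e^{-\alpha/2}(e^\alpha-1)$, gives
\[|\det(1-\mathrm{Ad}(t))|^{1/2}_{\mathfrak h/\mathfrak t_H}=\Bigl|\prod_{\alpha\in R_H^+}e^{-\alpha/2}(t)\cdot\prod_{\alpha\in R_H^+}(1-e^\alpha(t))\Bigr|=|e^{-\rho_H}(t)\,D_{\mathfrak n_H}(t)|.\]

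Applying this identity to $H$ in the numerator of \eqref{Delta1} and to $H'$ and $H''$ in the denominator, and rearranging, yields directly the claimed equality, modulo absolute values on each of the factors $e^{-\rho_?}D_{\mathfrak n_?}$. The cancellation of the absolute values is then read off from the explicit coordinates \eqref{eq:classconj}: under the identifications induced by $\mathcal A_{(H'\times H'')/H}$, the set $R_H^+$ decomposes as the disjoint union of $R_{H'}^+$, $R_{H''}^+$ (embedded via the natural blocks) and a ``cross'' set of roots of the form $\pm y_i\pm z_j$ (and, in the odd orthogonal case, additional roots of the form $\pm y_i$, $\pm z_j$ coming from the short roots). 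The $R_{H'}^+$- and $R_{H''}^+$-contributions to $e^{-\rho_H}(t)D_{\mathfrak n_H}(t)$ coincide with $e^{-\rho_{H'}}(t')D_{\mathfrak n_{H'}}(t')$ and $e^{-\rho_{H''}}(t'')D_{\mathfrak n_{H''}}(t'')$ exactly, so the claim reduces to a combinatorial statement about the sign contributed by the cross roots.

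The main obstacle is therefore the sign bookkeeping across the three families of endoscopic pairs listed in \S\ref{par:6.3} (namely $H=\SO(2\ell+1,\CC)$, $H=\SO(2\ell,\CC)$, and $H=\Sp(2\ell,\CC)$, each with its appropriate $H'\times H''$). In each case the decomposition $R_H^+=R_{H'}^+\sqcup R_{H''}^+\sqcup R_{\rm cross}$ is completely explicit in the coordinates of \S\ref{par:6.3}, so the verification reduces to a direct and case-by-case computation analogous to \eqref{DenEx} in the example of \S\ref{1.4}; this is precisely the routine check that justifies the authors' decision to leave the lemma to the reader.
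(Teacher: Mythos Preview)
Your proposal is correct and follows exactly the route the paper intends: the authors explicitly leave Lemmas~\ref{den=1} and~\ref{den=2} to the reader as routine computations analogous to the example of \S\ref{1.4}, and your plan (unwind \eqref{Delta1}, rewrite each discriminant as a root product, regroup into Weyl denominators, take the ratio) is precisely that computation.

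One simplification you are missing: the ``sign bookkeeping'' you flag as the main obstacle is in fact vacuous here, because all the groups $H$, $H'$, $H''$ are \emph{complex}. For a complex group viewed as a real group, the complexified nilpotent $\mathfrak n_H$ carries the roots in conjugate pairs $\alpha,\bar\alpha$, so
\[
D_{\mathfrak n_H}(t)=\prod_{\alpha\in R_H^+}(1-e^{\alpha}(t))(1-\overline{e^{\alpha}(t)})=\prod_{\alpha\in R_H^+}|1-e^{\alpha}(t)|^2>0,
\]
and likewise $e^{-\rho_H}(t)=|e^{-\rho_{\mathrm{hol}}}(t)|^2>0$. Hence each factor $e^{-\rho_?}(t_?)D_{\mathfrak n_?}(t_?)$ is already a positive real number, the right-hand side of the lemma equals its own absolute value, and your identity with absolute values \emph{is} the lemma. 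No case-by-case cross-root sign check is needed; the decomposition $R_H^+=R_{H'}^+\sqcup R_{H''}^+\sqcup R_{\mathrm{cross}}$ is only relevant if one wants to identify the ratio explicitly as a product over cross roots, which is not required for the statement.
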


Soit $W'$ (resp. $W''$) le module d'Harish-Chandra de $\Pi'$ (resp. $\Pi''$). 
On d\'eduit alors des lemme \ref{55}, \ref{den=1} et \ref{den=2} et du th\'eor\`eme
\ref{Osborne} l'\'egalit\'e
\begin{multline} \label{id1}
\sum_{q' , q''} (-1)^{q'+q''} \Theta_{q'}^{\theta} (\tilde{t}' ,W' ) e^{-\rho'} (\tilde{t}')
 \Theta_{q''}^{\theta} (\tilde{t}'' , W'')e^{-\rho ''} (\tilde{t} '') \\ 
= \varepsilon \sum_{\pi \in \small\prod (\psi )}  m (\pi) \left\{ \sum_q (-1)^q \Theta_q (t , V) e^{-\rho_H} (t) \right\} .
\end{multline} 
Lorsque $H= \SO (2\ell , \CC)$ une repr\'esentation $\pi \in \small\prod (\psi)$ est consid\'er\'ee \og modulo $\alpha$ \fg, elle d\'etermine alors une paire $\{ \pi , \pi ' \}$ de vraies repr\'esentations de $G$.
Dans ce cas il faut remplacer le membre de droite de \eqref{id1} par~:
$$\varepsilon \sum_{\pi \in \small\prod (\psi )}  m (\pi) \left\{ \sum_q (-1)^q \left[ \Theta_q (t , V)  + \Theta_q (t , V') \right] e^{-\rho_H} (t) \right\},$$ 
o\`u l'on a not\'e $V$ et $V'$ les modules d'Harish-Chandra associ\'ees \`a $\pi$ et $\pi'$. 

\subsection{D\'emonstration de la proposition \ref{prop:exp}}

Soit $(\pi , V) \in \small\prod (\psi )$. Consid\'erons le terme du membre de droite de 
\eqref{id1} associ\'e \`a $H_0 (\mathfrak{n}_H , V)$. Il intervient avec un signe $\varepsilon$ qui 
ne d\'epend pas de $\pi$. Il d\'ecoule donc de \cite[Prop. 3.2]{HechtSchmid} -- c'est-\`a-dire de la proposition \ref{ExpMax} -- que tout exposant 
minimal, pour l'ordre $\leq_H$, doit appara\^{\i}tre dans le membre de gauche de \eqref{id1} 
sauf s'il est annul\'e 
par un exposant d'une autre repr\'esentation $(\pi_1 , V_1)$ (dont l'homologie appara\^{\i}t en degr\'e
impair). 

Si $e$ est un exposant minimal et intervient dans $H_0 (\mathfrak{n}_H , V)$, on voit donc que 
$e$ subsiste dans le membre de gauche, ou bien que $e=e''$ o\`u $e''$ appara\^{\i}t, en degr\'e impair,
dans $H_q (\mathfrak{n}_H , V_1 )$. Alors $e \geq_H e'$ o\`u $e'$ est un exposant minimal de $V_1$.
(De plus $e\neq_H e'$). Par r\'ecurrence on voit que $e  \geq_H e '$ o\`u (en changeant de notation) $e'$
est minimal et subsiste dans le membre de gauche de \eqref{id1}. 

On s'est donc ramen\'e au cas o\`u $e$ subsiste dans le membre de gauche de \eqref{id1}. En d'autres termes il existe  des $\theta$-exposants d'homologie $e'$ et $e''$  de $\Pi'$ et $\Pi''$ tels que $e=e'+e''$.

\section{$\Theta$-exposants des groupes lin\'eaires}

On note toujours $G = \GL (N , \CC)$ ($N \geq 1$) et $\theta$ l'automorphisme involutif 
$g \mapsto J {}^t \; g^{-1} J^{-1}$.  

Soit $\Pi$ la repr\'esentation \eqref{Pi} de $G$ et $W$ son module d'Harish-Chandra.
On note
$e_{\psi}$ l'exposant $(m_1 \leq \ldots \leq m_{\ell}) \in \CC^{\ell}$ o\`u les $m_j \in \N$ sont 
les \'el\'ements des segments $\sigma_i = \left( a_i -1, a_i -2 , \ldots  \right)$, rang\'es par ordre croissant.
On note\footnote{Les groupes $E$ des paragraphes pr\'ec\'edents n'interviennent pas dans cet argument; on esp\`ere que la notation ne pr\^ete pas \`a confusion.} $E_{\psi}$ l'exposant total $\theta$-stable $(m_1 \leq \ldots \leq m_N)$. Noter que $E_{\psi}$, compos\'e avec la norme $\mathcal{N}_{H/G}$, est \'egal \`a $e_{\psi}$ quand $H$ est un sous-groupe endoscopique avec un seul facteur. 

\subsection{} On note encore $\leq_{\theta}$ 
l'ordre induit par $\leq_{\theta}$ sur les exposants de $H$. En d'autres termes, $\leq_{\theta}$ correspond toujours \`a l'ordre donn\'e par le groupe $\SO(2\ell +1)$; celui-ci ne co\"{\i}ncide avec un
sous-groupe endoscopique $H$ que lorsque $N=2\ell$ est pair et $H=\SO(2\ell +1)$.

Noter que l'on dispose de la m\^eme mani\`ere d'ordres $\leq_{\theta}'$ et $\leq_{\theta}''$ sur les
exposants $\theta$-stables de $G'$ et $G''$. 
L'identification de $A$ au produit $A' \times A''$ permet alors de d\'efinir un 
ordre partiel $(\leq_{\theta} ' \times \leq_{\theta} '')$ sur les exposants $\theta$-stables de $G$. En d'autres termes, $(\leq_{\theta} ' \times \leq_{\theta} '')$ est l'ordre donn\'e par les sommes de racines 
$N\alpha$ des facteurs $G'$ et $G''$ dans $G$. {\it Via} les applications normes l'ordre partiel
$(\leq_{\theta} ' \times \leq_{\theta} '')$ induit un ordre sur les exposants de $H$.
On a imm\'ediatement~:
\begin{equation} \label{plusfin}
e(\leq_{\theta} ' \times \leq_{\theta} '') e' \Rightarrow e \leq_{\theta} e '.
\end{equation}

Le th\'eor\`eme \ref{thm:intro} d\'ecoule imm\'ediatement de la proposition \ref{prop:exp}, de \eqref{plusfin} et de la proposition
suivante dont la d\'emonstration fait l'objet de cette section.
 
\begin{prop} \label{Pexp}
Pour tout $\theta$-exposant d'homologie $E$ de $\Pi$, on a $\mathrm{Re} (E) \geq_{\theta} E_{\psi}$.
\end{prop}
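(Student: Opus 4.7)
The plan is to compute $H_{\ast}(\n, W)$ where $W$ is the Harish-Chandra module of $\Pi = \mathrm{ind}_{P_0}^G(\bigotimes_j \chi_j \circ \det)$, and then identify the minimum (in the $\leq_{\theta}$ order) of its $\theta$-stable exponents with $E_{\psi}$. First, I would apply the Hecht-Schmid Bruhat filtration for parabolically induced Harish-Chandra modules to $W$: since $\Pi$ is irreducible (by Vogan \cite{VoganU}, Bernstein, Baruch, as recalled before Theorem \ref{thm:Arthur}), $W$ really does coincide with the induced $(\g, K)$-module, and since the inducing character $\sigma = \bigotimes_j \chi_j \circ \det$ is one-dimensional, the filtration yields $H_{\ast}(\n, W)$ as a direct sum indexed by coset representatives $w \in W(\G, \T)/W(\M_0, \T)$, with $T$-weights of the form $w \cdot \mu$, where $\mu$ is the extension of $\sigma$ to a $T$-character via the $\rho$-shift of normalized induction from $P_0$.

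Second, I would analyze $\mu$ explicitly. The Langlands parameter of $\Pi$ is the restriction of $\psi$ to $\CC^{\ast} \times (\text{diagonal of } \SL(2,\CC))$, so on block $j$ of $T \cong (\CC^{\ast})^N$ the real parts of $\mu$ are precisely the $\SL_2$-weights of $R_{a_j}$, namely $(a_j - 1, a_j - 3, \ldots, -(a_j - 1))$ (the imaginary parts come from $\chi_j$, which is unitary). Concatenating over blocks, the real-part vector of $\mu$ on $A_G = (\RR_+^{\ast})^N$ is the union of the segments $\sigma_j$ and their negatives. A translate $w \mu$ descends to a $\theta$-invariant character of $A \subset T^1$ precisely when the permutation $w \in \mathfrak{S}_N$ arranges the $N$ coordinates so that positions $i$ and $N+1-i$ carry opposite real parts; via the norm map $\mathcal{N}$ of \S\ref{sec:Norme}, such $\theta$-stable weights restrict to characters of $A \cong \RR^{\ell}$ whose real parts are signed arrangements of the non-negative half-segment values drawn from the $\sigma_j$.

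Third and most crucially, I would identify $E_{\psi}$ as the unique minimum of these $\theta$-stable exponents in the $\leq_{\theta}$ order. Since $\leq_{\theta}$ is the order given by the $\SO(2\ell + 1, \CC)$-root system, and the Weyl group $W^{\theta} \cong \mathfrak{S}_{\ell} \rtimes \{\pm 1\}^{\ell}$ acts on the $\ell$-tuple of real parts by permutations and sign changes, the dominant representative in each $W^{\theta}$-orbit is the unique element with non-negative coordinates sorted in increasing order. By construction this is exactly $E_{\psi}$, so every $\theta$-stable exponent is $\geq_{\theta} E_{\psi}$, as claimed.

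The hard part is the final combinatorial step: verifying that the $\mathfrak{S}_N$-translates of $\mathrm{Re}(\mu)$ that project to $\theta$-stable characters of $A$ are exactly the signed arrangements of the half-segments, and that $E_{\psi}$ is indeed the $\SO(2\ell+1, \CC)$-dominant element of the resulting $W^{\theta}$-orbit. This requires a careful bookkeeping of the permutation action of $\mathfrak{S}_N$ on the blockwise Langlands-parameter coordinates, of the $\theta$-stability constraint pairing $i \leftrightarrow N+1-i$, and of the $\SO(2\ell+1)$-dominance order, together with the normalization factor introduced by $\mathcal{N}$ in the cases where $\widehat{G} = \Sp(N, \CC)$.
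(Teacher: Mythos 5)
Your approach is genuinely different from the paper's. The paper reduces to $H_0$ via Proposition \ref{ExpMax}, interprets an $H_0$-exponent through Frobenius reciprocity as a non-zero map $W\to I$ to a Borel-induced module, and then chases intertwining operators for $\GL(2,\CC)$ factors to drive the exponent down to the Langlands position $E_\psi$, obtaining $\mathrm{Re}(E)\geq E_\psi$ for the $G$-order; only in a final one-line step does it pass to $\geq_\theta$ by noting that when $E$ and $E_\psi$ are both $\theta$-stable, $\mathrm{Re}(E)-E_\psi = \sum n_\alpha\alpha$ with $n_\alpha\geq 0$ automatically rewrites as $\sum n_\alpha \mathrm{N}\alpha$. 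You instead try to compute $H_*(\n,W)$ directly via a Bruhat filtration and then do the comparison purely inside the $W^\theta$-orbit. Two comments on that route, one minor, one substantive.

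Minor: the assertion that $H_*(\n,W)$ is literally a \emph{direct sum} indexed by coset representatives is stronger than what the Bruhat/Casselman filtration gives in general (it gives a filtration, hence an upper bound on the $T$-weights appearing; the actual homology is a subquotient). For your inequality the upper bound is all you need, so this slip is harmless.

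Substantive, and this is the step you yourself flag as "the hard part": the identification of $E_\psi$ as the extremal element of the orbit is stated backwards. You write that $E_\psi$ should be "the dominant representative," described as "the unique element with non-negative coordinates sorted in increasing order." But for the conclusion $\mathrm{Re}(E)\geq_\theta E_\psi$ to hold, $E_\psi$ must be the $\leq_\theta$-\emph{minimum} of the orbit, i.e.\ the \emph{anti-dominant} element. Moreover, for the $B_\ell$ root system (simple roots $x_1-x_2,\dots,x_{\ell-1}-x_\ell,x_\ell$), "non-negative, sorted increasing" is neither dominant nor anti-dominant: dominant means non-negative sorted \emph{decreasing}, and anti-dominant means non-\emph{positive} sorted increasing. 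The paper's own proof confirms the anti-dominant normalization: it explicitly moves the exponent \emph{down} by intertwining ("sinon le param\`etre de $I$ n'est pas anti-dominant"), and $e_\psi$, being the restriction to $\mathfrak{a}^1$ of the $\GL_N$-anti-dominant $E_\psi$, has non-positive entries. (The introduction's "$m_j\in\N$" appears to be a slip in the paper; the body of the argument is consistent with $m_j\leq 0$.) As written your Step 3 would therefore establish the reverse inequality $E\leq_\theta E_\psi$. Once the sign conventions are straightened out, your route can be made to work, but you would still need to supply the bookkeeping (that the $\theta$-stable Weyl translates of $\mu$ really do form a single $W^\theta$-orbit, and that its anti-dominant element is $e_\psi$), which is precisely the content that the paper's intertwining-operator argument delivers for free in the more flexible $G$-order before it specializes to $\theta$-stable exponents.
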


Pour la d\'emonstration, nous oublions pour l'instant la pr\'esence de $\theta$. 
D'apr\`es la classification de Langlands (et l'absence de $L$-indiscernabilit\'e pour les groupes 
lin\'eaires ou complexes), on peut r\'ealiser $\Pi$ comme l'unique sous-module irr\'eductible de l'induite normalis\'ee
\begin{equation} \label{ind}
\mathrm{ind}_B^G ( \eta_1 \otimes \ldots  \otimes \eta _N)
\end{equation}
o\`u $B$ est le sous-groupe de Borel et $\eta_1 , \ldots , \eta_N$ sont obtenus ainsi~: On \'ecrit $a_1 \geq \ldots  \geq a_m$. Alors 
\begin{multline*}
\eta_1 = | \cdot |^{\frac{1-a_1}{2}} \chi_1 , \ \eta_2 = |\cdot |^{\frac{1-a_1}{2}} \chi_2 \ (\mbox{si } a_2 = a_1) \\ 
\ldots \ \eta_k = | \cdot |^{\frac{1-a_1}{2}} \chi_k \ (\mbox{si } a_k = a_1).
\end{multline*}
On range ensuite les caract\`eres de valeur absolue
sup\'erieure, {\it etc}. Noter que la formulation la plus r\'epandue de la classification de Langlands conduit \`a r\'ealiser $\Pi$ comme quotient; la r\'ealisation ci-dessus s'en d\'eduit par dualit\'e. Puisque $\Pi$ est $\theta$-stable, on peut en outre arranger les $\eta_i$ de sorte que le caract\`ere $\eta$ obtenu soit $\theta$-invariant. L'unicit\'e de la classification de Langlands implique alors que si 
$(a_i , \chi_i ')$ est une autre donn\'ee, on a $(a_i , \chi_i ') = (a_i , \chi_i)$ \`a permutation pr\`es. 

Noter que l'exposant $\theta$-stable dans $H_0 (\n , W)$ associ\'e \`a la r\'ealisation \eqref{ind} de 
$\Pi$ a pour partie r\'eelle $E_{\psi}$. 

\subsection{} Consid\'erons un exposant d'homologie $E$ de $\Pi$. D'apr\`es \cite[Prop. 3.2]{HechtSchmid} (ou proposition \ref{ExpMax}), 
l'exposant $E$ est sup\'erieur -- pour l'ordre de $G$ -- \`a un exposant d'homologie $E_0$ intervenant dans $H_0  (\mathfrak{n} , W)$. Et d'apr\`es le th\'eor\`eme de r\'eciprocit\'e de Frobenius, il correspond\footnote{Comme au paragraphe \ref{615b}.} \`a l'exposant $E_0$  dans
$H_0 (\mathfrak{n} , W)$ un homomorphisme non-nul $W \rightarrow I$ vers une induite 
$$I = \mathrm{ind}_B^G (\lambda_1 \otimes \ldots \otimes \lambda_N )$$
avec $\lambda_j = z^{p_j} \overline{z}^{q_j} = (z/ \overline{z})^{\mu_j} (z \overline{z})^{\nu_j}$ et
$E_0= (p_1+q_1 , \ldots , p_N + q_N)$.\footnote{On a $\nu_j = \frac12 (p_j +q_j)$, $\mu_j = \frac12 (p_j-q_j)$. En particulier $\mu_j$ est un demi-entier.} 

Si l'image de $W$ est le sous-module de Langlands $L \subset I$, il d\'ecoule de l'unicit\'e de la classification de Langlands que $E_0=E_{\psi}$. Sinon le param\`etre de $I$ n'est pas anti-dominant, donc
on peut trouver $i$ tel que $\nu_i > \nu_{i+1}$. Dans $\GL(2, \CC)$ on a un op\'erateur d'entrelacement 
non-nul 
\begin{equation} \label{entrelacement}
\mathrm{ind} (\lambda_i \otimes \lambda_{i+1}) \rightarrow \mathrm{ind} (\lambda_{i+1} \otimes \lambda_i )
\end{equation}
o\`u le caract\`ere $\lambda_i$ est de param\`etres $(\mu_i , \nu_i )$. Consid\'erant l'induite totale \`a $G$ on en d\'eduit un 
op\'erateur d'entrelacement 
\begin{equation} \label{eq:entralct}
I \rightarrow I'.
\end{equation}
Il y a deux possibilit\'es~: Si c'est un isomorphisme, on a \'evidemment un morphisme non-nul 
$W \rightarrow I'$ et on peut remplacer $(\nu_i , \nu_{i+1})$ par~:
\begin{equation} \label{eq:chgtrac1}
(\nu_i ' , \nu_{i+1} ') = (\nu_{i+1} , \nu_i ) = (\nu_i , \nu_{i+1}) - (\nu_i - \nu_{i+1}) (1, -1) .
\end{equation}
Le cas o\`u \eqref{eq:entralct} n'est pas un isomorphisme n'appara\^it que si 
$\lambda_i \lambda_{i+1}^{-1} (z) = z^p \overline{z}^q$, avec $p$ et $q$ entiers $\geq 1$, voir par exemple \cite[Thm. 6.2]{JL} ou \cite{Duflo} pour des r\'esultats plus complets sur les op\'erateurs d'entrelacement pour les groupes complexes.
Dans ce cas le noyau de \eqref{entrelacement} est une induite $\mathrm{ind} (\lambda_i ' \otimes \lambda_{i+1} ')$
avec 
$$\lambda_i ' \lambda_{i+1} ' = \lambda_i \lambda_{i+1} \mbox{ et } \lambda_i ' (\lambda_{i+1} ')^{-1} = 
z^p \overline{z}^{-q}.$$
En particulier, les $\nu_i'$ et $\nu_{i+1}'$ associ\'es aux caract\`eres $\lambda_i'$ et $\lambda_{i+1}'$ 
v\'erifient~:
\begin{equation} \label{eq:chgtrac2}
(\nu_i ' , \nu_{i+1} ') = (\nu_i , \nu_{i+1}) -q (\frac12,- \frac12) .
\end{equation}
On a alors une suite exacte de repr\'esentations de $\GL (2 , \CC)$~:
$$0 \rightarrow \mathrm{ind} (\lambda_i ' \otimes \lambda_{i+1} ') \rightarrow \mathrm{ind} (\lambda_i \otimes \lambda_{i+1} ) \rightarrow F \rightarrow 0,$$
$F$ \'etant d'ailleurs de dimension finie d'o\`u par induction
$$0 \rightarrow J \rightarrow I \rightarrow \mathrm{ind} \ F \rightarrow 0 $$
(repr\'esentations de $G$). Si l'image de $W$ dans $I$ s'envoie non trivialement dans $\mathrm{ind} \ F
\subset I'$, on est r\'eduit au cas pr\'ec\'edent. Sinon on obtient $W \rightarrow J$, application non-nulle, et on peut remplacer $(\nu_i , \nu_{i+1})$ par $(\nu_i ' , \nu_{i+1} ')$ donn\'e par \eqref{eq:chgtrac2}.

Notons que l'ordre sur les exposants de $G$ est d\'efini par les racines r\'eelles sur $A \cong (\R^{\times})^N$ (cf. \cite{HechtSchmid} ainsi que le \S \ref{S4}). En particulier les racines simples sont
donn\'ees, pour $x=(x_i) \in A$, par $\mathrm{diag} (x_i ) \mapsto x_i x_{i+1}^{-1}$. D'apr\`es 
\eqref{eq:chgtrac1} et \eqref{eq:chgtrac2} on a donc pour le nouvel exposant $E'$
$$E' = E - 2 (\nu_i - \nu_{i+1}) \alpha_i $$
ou bien 
$$E' = E -q \alpha_i.$$
Dans le premier cas, $\nu_i - \nu_{i+1} = \frac{p+q}{2}$ est un demi-entier $>0$. On voit donc que dans le cas de r\'eductibilit\'e 
$$E' \leq E.$$

Apr\`es un nombre fini de telle op\'erations on obtient 
$$0 \rightarrow W \rightarrow I^{(k)}$$
o\`u l'induite $I^{(k)}$ est en position de Langlands, et donc l'exposant associ\'e est de partie r\'eelle 
\'egale \`a $E_{\psi}$. On a donc pour tout exposant d'homologie
$$\mathrm{Re} (E) \geq E_{\psi}.$$

\subsection{D\'emonstration de la proposition \ref{Pexp}} Supposons maintenant que $E$ est $\theta$-stable. Alors 
$$\mathrm{Re}  (E) = E_{\psi} + \sum_{\alpha} m_{\alpha} \alpha,$$
o\`u $\alpha$ parcourt les racines simples de $G$. Puisque $E$ et $E_{\psi}$ sont tous les deux 
$\theta$-stables on a alors~:
$$\mathrm{Re}  (E) = E_{\psi} + \sum_{\mathrm{N}\alpha} m_{\alpha} \mathrm{N}\alpha.$$
Autrement dit~:
$$\mathrm{Re}(E) \geq_{\theta} E_{\psi}.$$

\bibliography{bibli}

\bibliographystyle{smfplain}

\end{document}